    \newtheorem{theorem}{Theorem}[section]
    \newtheorem{lemma}[theorem]{Lemma}
    \newtheorem{remark}[theorem]{Remark}
    \newtheorem{example}[theorem]{Example}
    \newcommand{\ct}[1]{\langle {#1}\rangle \lower.3ex\hbox{$_{t}$}}
    \newcommand{\lt}[1]{[ {#1}] \lower.3ex\hbox{$_{t}$}}
\newcommand{\R}{\mathbb{R}}
\theoremstyle{definition}
\newtheorem{definition}[theorem]{Definition}
\newcommand{\abs}[1]{\lvert#1\rvert}
\newcommand{\beq}{\begin{equation}}
\newcommand{\bea}[1]{\begin{array}{#1} }
\newcommand{\eeq}{ \end{equation}}
\newcommand{\ea}{ \end{array}}
\newcommand{\ep}{\eps}
\newcommand{\om}{\omega}
\newcommand{\Om}{\Omega}
\newcommand{\ud}{\, d}
\newcommand{\half}{{\frac{1}{2}}}
\newcommand{\ol }{\overline}
\newcommand{\tr}{\operatorname{trace}}
\newcommand{\eps}{\varepsilon}
\newcommand{\diag}{\operatorname{diag}}
\newcommand{\kom}[1]{}
\renewcommand{\kom}[1]{{\bf [#1]}}
\newcommand{\Rn}{\mathbb{R}^n}
\newcommand{\ac}{\mathcal{AC}}
\newcommand{\str}{\mathcal{S}}
\begin{document}
\title[Tug-of-war, market manipulation and option pricing]{Tug-of-war, market manipulation\\ and option pricing  }

\author{K. Nystr{\"o}m, M. Parviainen }

\address{Kaj Nystr\"{o}m, Department of Mathematics, Uppsala University\\
S-751 06 Uppsala, Sweden}
\email{kaj.nystrom@math.uu.se}
%
%
\address{Mikko Parviainen, Department of Mathematics and Statistics, University of
Jyv\"askyl\"a, PO~Box~35, FI-40014 Jyv\"askyl\"a, Finland}
\email{mikko.j.parviainen@jyu.fi}

\keywords{Infinity Laplace, non-linear parabolic partial differential equation, option pricing, stochastic differential game, tug-of-war}
\subjclass[2010]{91G80, 91A15, 91A23, 60H15, 49L25, 35K59}
\thanks{\emph{Acknowledgements.} The authors are grateful to two anonymous referees and the managing editor for their careful reading of the paper and their valuable comments. This work was partly done during a visit to the Institut Mittag-Leffler. MP is supported by the Academy of Finland.}

\begin{abstract}
We develop an option pricing model based on a tug-of-war game. This two-player zero-sum stochastic differential game is formulated in the context of a multi-dimensional financial market. The issuer and the holder  try to manipulate
asset price processes in order to minimize and maximize the expected discounted reward. We prove
that the game has a value  and that the value function is the unique viscosity solution to a terminal value problem for a parabolic  partial differential equation
involving the  non-linear and completely degenerate  infinity Laplace operator.
\end{abstract}

\maketitle

\setcounter{equation}{0} \setcounter{theorem}{0}

\section{Introduction}

A feature of illiquid markets is that large transactions move prices. This is a disadvantage
for traders needing to liquidate large portfolios, but there are also situations where traders may benefit from
moving prices. For example, a trader holding a large number of options may have an
incentive to impact the dynamics of the underlying and to move the option
value in a favorable direction if the increase in the option value outweighs the trading
costs in the underlying. There is some empirical evidence, see \cite{GS}, \cite{P}, \cite{KS}, that in illiquid markets, option traders are in fact able to increase the value of a derivative by
moving the price of the underlying.

 We consider option pricing in the context of a two-player zero-sum stochastic differential game in a multi-dimensional financial market. In the game, the issuer and holder of the option try, respectively, to manipulate/control the drifts and the volatilities of the
assets in order to minimize and maximize, respectively, the expected discounted reward at the terminal date $T$. An important contribution is that we establish a connection between option pricing and a tug-of-war game. In the prevailing model for option pricing, the governing partial differential equation is the Black-Scholes equation. In our context, the partial differential equation becomes substantially more involved due to the presence of the non-linear and completely degenerate infinity Laplace operator.

\subsection{Price dynamics}
\label{sec:price-dyn}
First we  give a heuristic description of the price formation process. Let
\begin{eqnarray}\label{pp1-}
S(t)=(S_1(t,\omega),...,S_n(t,\omega)):[0,T]\times\Omega\to \mathbb R^n_+\notag
\end{eqnarray}
be the stochastic process which represents
the prices  of $n$ assets at time $t\in[0,T]$. To keep mathematical tractability, we formulate the dynamics of $S=S(t)$ as a system of stochastic differential equations for the vector of $\log$-returns
\begin{eqnarray*}
  X(t)&=&(X_1(t,\omega),...,X_n(t,\omega)):[0,T]\times\Omega\to \mathbb R^n,\\
  X_i(t)&=&\log(S_i(t))\quad \text{for}\quad  i\in\{1,...,n\}.
\end{eqnarray*}
Let $(\Omega,\mathcal{F},\mathbb P)$ be a probability space satisfying the standard assumptions and let $\{\xi_{i,k}\}$, for $i\in\{1, \ldots,n,n+1\}$, $k\in\mathbb N$, be sequences of  i.i.d random variables such that $$\mathbb P(\xi_{i,k}=1)=1/2=\mathbb P(\xi_{i,k}=-1).$$ In particular, each $\xi_{i,k}$ represents the outcome  of a standard coin toss. We let
$\mathcal{F}_k$ be a filtration of $\mathcal{F}$ to which $\{\xi_{i,k}\}$  are adapted.

Let $N\in \mathbb N$ denote our discretization parameter.
We let $X_{i,k}^N$ denote the state of
the log-returns of asset $i$ after step $k$. At this level, the model can be expressed as
\begin{eqnarray}\label{model1-}
X_{i,k}^N-X_{i,k-1}^N&=&\mbox{a random walk increment with drift}\notag\\
&&+\mbox{ an increment resulting from price manipulation}\notag\\
&&\ \ \mbox{ modeled as a tug-of-war game}.\notag
\end{eqnarray}

To be more precise, at step $k$ of the game, a sample of $(\xi_{1,k},\ldots,\xi_{n+1,k})$ is generated.
For component $i\in \{1\ldots,n\}$, the contribution from the random walk  with drift is modeled as
\[
\begin{split}
\frac {\mu_i}N+\frac 2{\sqrt{N}}\sigma_i\xi_{i,k}
,\end{split}
\]
where $\mu_i\in\mathbb R$ and $\sigma_i>0$ represents the magnification of the step $2\xi_{i,k}$. Volatility can vary from the asset to asset.

Next,  let
$\{\theta_{k}^\pm\}=\{(\theta_{1,k}^\pm,\ldots,\theta_{n,k}^\pm)\}$ be $\mathcal{F}_k$-adapted random variables such that $\theta_{k}^\pm\in \{x\in\mathbb R^n:\ |x|\leq1/\sqrt{N}\}$. The
sequences $\{\theta_{k}^+\}$, $\{\theta_{k}^-\}$, correspond to the control actions of the maximizing and  minimizing player in a tug-of-war game.  In particular, it is assumed  that each of the two players can affect the price process and push it in a  favorable direction but turns are taken randomly. In this setting,  the increment of component $i$, at step $k$,  based on coin toss $\xi_{n+1,k}$, is
\[
\begin{split}
2\sigma_i&\biggl (\theta_{i,k}^+\frac {(1+\xi_{n+1,k})}2+\theta_{i,k}^-\frac {(1-\xi_{n+1,k})}2\biggr )\\
&=2\sigma_i\biggl (\frac {(\theta_{i,k}^+-\theta_{i,k}^-)}2\xi_{n+1,k}+\frac {(\theta_{i,k}^++\theta_{i,k}^-)}2\biggr ).
\end{split}
\]
Again the actions of the players are magnified by the factors $\{\sigma_i\}$.

Put together, the positions of the log-returns after $j$ steps are   $X_{j}^N=(X_{i,j}^N,...,X_{i,j}^N)$, where
\[
\begin{split}
X_{i,j}^N=&x_i+\mu_{i}\frac j N+\frac 2{\sqrt{N}}\sigma_i\sum_{k=1}^j\xi_{i,k}\notag\\
&+2\sigma_i\sum_{k=1}^j\biggl (\frac {(\theta_{i,k}^+-\theta_{i,k}^-)}2\xi_{n+1,k}+\frac {(\theta_{i,k}^++\theta_{i,k}^-)}2\biggr ).\notag
\end{split}
\]

We define $\{W_i^{N}(t)\}_{t\geq 0}$, $i\in\{1,...,n+1\}$, by setting $$(W_1^{N}(0),\ldots,W_{n+1}^N(0))=0$$ and using the relations
\begin{eqnarray}\label{model2}
W_i^{N}(t)&=&W_i^{N}((k-1)/N)+\biggl (t-\frac {k-1}N\biggr)\sqrt{N}\xi_{i,k},\notag
\end{eqnarray}
whenever $t\in((k-1)/N,k/N],\ k\in\mathbb N$. Moreover, we define continuous time processes by setting
$$
X^N(t)=X^N_{[Nt]}, \quad \theta^{\pm,N}(t)=\sqrt{N}\theta^{\pm}_{[Nt]}.
$$
With this notation, the above dynamics becomes
\begin{eqnarray}\label{model3}
X_{i}^N(t)=A_{i}^N(t)+B_{i}^N(t)+C_{i}^N(t),\qquad i\in \{1,\ldots,n\},
\end{eqnarray}
where
\begin{eqnarray}\label{model3+}
A_{i}^N(t)&=&x_i+\int_0^t\mu_ids+\int_0^t\sigma_idW_i^{N}(s),\notag\\
B_{i}^N(t)&=&\sigma_i\int_0^t(\theta_i^{+,N}(s)-\theta_i^{-,N}(s))d W_{n+1}^{N}(s),\notag\\
C_{i}^N(t)&=&\sigma_i\int_0^t\sqrt{N}(\theta_i^{+,N}(s)+\theta_i^{-,N}(s))ds.\notag
\end{eqnarray}
Then, by passing to the limit, using Donsker's invariance principle,
\begin{eqnarray}\label{model3++}
A_{i}^N(t)&\to&x_i+\int_0^t\mu_ids+\int_0^t\sigma_idW_{i}(s),\ \notag\\
B_{i}^N(t)&\to& \sigma_i\int_0^t(\theta^{+}_i(s)-\theta^{-}_i(s))d W_{n+1}(s),\notag
\end{eqnarray}
as $N\to\infty$ where $W_i$,  $W_{n+1}$, are standard and independent Brownian motions. To understand the continuous time limit
of the outlined price dynamics, as  $N\to\infty$, the key difficulty is to understand the asymptotic behavior of the term $C_{i}^N(t)$. A solution, due to \cite{atarb10} in the context of time independent equations, is to replace $\sqrt{N}$ with dynamically controlled quantities $d^+$ and $d^-$. This approach is  motivated by the connection between tug-of-war games
and the infinity Laplace operator in \cite{PSSW}. Therefore, as described later, the core of the model is given by
\begin{equation}
\label{eq:dynamics}
\begin{split}
dX_i(s)=&\Big(\mu_i+\sigma_i(d^+(s)+d^-(s))(\theta_i^+(s)+\theta^-_i(s))\Big)ds\\
&+\sigma_idW_i(s)+ \sigma_i (\theta_i^+(s)-\theta^-_i(s))dW_{n+1}(s),
\end{split}
\end{equation}
with sufficient assumptions on the controls $d^\pm$, $\theta^\pm$.

\subsection{Fair game option pricing} Let $(\Omega,\mathcal{F},\{\mathcal{F}_t\},\mathbb P)$ denote a complete filtered probability space with a right-continuous filtration supporting an $(n+1)$-dimensional and $\{\mathcal{F}_t\}$-adapted Brownian motion $W=(W_1,...,W_{n+1})$. We assume that all components are independent.

There are  two competing players, one maximizing and one minimizing, which both attempt to control and manipulate the log-returns  $X(t)$ of the underlying assets. Denote by $\mathbb S^{n-1}$ the unit sphere of $\Rn$. We let
 $$
 \mathcal{H}:=\mathbb S^{n-1}\times [0,\infty),
 $$
 and
\begin{eqnarray}
A^+:=A^+(t):=(\theta^+(t),d^+(t)),\qquad  A^-:=A^-(t):=(\theta^-(t),d^-(t)),\notag
\end{eqnarray}
where
\begin{eqnarray}
\theta^\pm(t)\in\mathbb S^{n-1},\ d^\pm(t)\in[0,\infty),\ t\in[0,T],\notag
\end{eqnarray}
are $\{\mathcal{F}_t\}$-adapted stochastic processes representing the control actions of the maximizing and minimizing player. Heuristically,  $\theta^\pm(t)$ denote the directions and $d^\pm(t)$ the lengths of the steps taken by the players. ${\ac}$ denotes the set of all admissible controls.
  Each player also chooses a strategy $\rho^\pm$, which represents a response to the actions of the opponent, i.e.\ the strategies $\rho^\pm$ are functions from the space of controls to the space of controls. $\mathcal{S}$ denotes the set of all admissible strategies.
  Detailed definitions of (admissible) controls (${\ac}$) and strategies ($\mathcal{S}$) are given below in Definitions \ref{admissiblecont} and \ref{strategy}. Using this notation, the dynamics of the log-returns is given by \eqref{eq:dynamics}.

  Note that in \eqref{eq:dynamics}, the time-dependent controls of the players enter in the drift coefficient, and in the diffusion coefficient of the one-dimensional Brownian motion $W_{n+1}$. Hence, this part of the dynamics is degenerate in the
sense that it is possible for the players to completely switch off the one-dimensional Brownian motion  $W_{n+1}$.

Given  $A^\pm=(\theta^\pm,d^\pm)$ and a pay-off function $g$ at $T$,  we set
 \begin{eqnarray}\label{par1+}
J^{(x,t)}(A^+,A^-)&:=&\mathbb E[e^{-r(T-t)}g(X^{(x,t)}(T))]\notag\\
&=&\mathbb E[e^{-r(T-t)} g(X(T))]
\end{eqnarray}
where the superscript ${(x,t)}$ indicates that the game starts at position $x$ at time $t$. The expectation $\mathbb E[\cdot]$ is taken with respect to the measure $\mathbb P$.
\begin{definition}\label{game} The upper and lower values of the stochastic dynamic game, denoted
$U^+(x,t)$ and $U^-(x,t)$, are defined
 \[
\begin{split}
U^+(x,t)&=\sup_{\rho^+\in{\str}}\inf_{A^-\in {\ac}}J^{(x,t)}(\rho^+(A^-),A^-),\\
U^-(x,t)&=\inf_{\rho^-\in{\str}}\sup_{A^+\in {\ac}}J^{(x,t)}(A^+,\rho^-(A^+)).
\end{split}
\]
The game is said to have a value  at $(x,t)$ if $U^+(x,t)=U^-(x,t)$. If $U^+(x,t)=U^-(x,t)=:U(x,t)$, then we say that $U(x,t)$ is the fair game value of the option.
\end{definition}

\subsection{Statement of main results} Our fair game value of the option is related to the degenerate partial differential operator $F$
\begin{equation}
\begin{split}
\label{par5qq+}
F(u,Du,D^2u):=&\frac 2{|Du|^2}\Big(\sum_{i,j=1}^n u_{x_ix_j}u_{x_i}u_{x_j}\sigma_i\sigma_j\Big )\\
&+\frac 12 \Big(\sum_{i=1}^n \sigma_i^2u_{x_ix_i}\Big )+\sum_{i=1}^n\mu_iu_{x_i}-ru,
\end{split}
\end{equation}
where
$
Du:=(u_{x_1},....,u_{x_n})^\prime,$  and
$D^2u$ is the matrix consisting of the second order derivatives.  We consider the terminal value problem
\begin{equation}
\label{eq:F-eq}
\begin{split}
\begin{cases}
 \partial_t u+F(u,Du,D^2u)=0,&\text{in }\mathbb R^n\times (0,T), \\
 u(x,T)=g(x),& \text{on }  \mathbb R^n.
\end{cases}
\end{split}
\end{equation}
Solutions to \eqref{eq:F-eq} have to be understood in the viscosity sense (see Definition \ref{vis1} and \ref{def:visc-for-F}). Concerning $g$, we adopt the following convention: throughout the paper it is our standing assumption that the function $g$ is a positive bounded Lipschitz function, i.e.
\begin{equation}
\label{eq:standing}
\begin{split}
\sup_{x\in\mathbb R^n}g(x)+\sup_{x,y\in\mathbb R^n,x\neq y}\frac {|g(x)-g(y)|}{|x-y|}\leq L,
\end{split}
\end{equation}
for some $L<\infty$. The assumptions concerning boundedness and positivity of $g$ are only imposed to minimize additional technical difficulties. The main result of the  paper is the following.
\begin{theorem}\label{th2} Let $g$ be as in \eqref{eq:standing} and let $U^\pm$ be the upper and lower values of the stochastic dynamic game as in Definition \ref{game}. Then, \begin{eqnarray*}
U^+\equiv U^-\quad \mbox{on}\quad  \mathbb R^n\times[0,T]
\end{eqnarray*}
and $U:= U^+\equiv U^-$ is the unique viscosity solution to  \eqref{eq:F-eq}. In particular, $U(x,t)$ is the fair game value of the option. 
  \end{theorem}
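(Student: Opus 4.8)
The plan is to identify the fair game value with the unique viscosity solution of the terminal value problem \eqref{eq:F-eq} by showing that both game values $U^{\pm}$ are viscosity solutions of \eqref{eq:F-eq} and then invoking a comparison principle that forces them to coincide. Before anything else I would record the elementary a priori properties of $U^{\pm}$. Since $\mu_i$ and $\sigma_i$ are constants, the dynamics \eqref{eq:dynamics} is invariant under translations in the state variable, so running the games started at $x$ and at $x'$ with the same Brownian paths and the same admissible controls and strategies (Definitions \ref{admissiblecont} and \ref{strategy}) gives $X^{(x,t)}(T)-X^{(x',t)}(T)=x-x'$; together with the standing assumption \eqref{eq:standing} this yields that $U^{\pm}$ are bounded (by $Le^{|r|T}$, using $0<g\le L$) and uniformly $L$-Lipschitz in $x$. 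Finiteness of $U^{\pm}$ is itself a point to be checked, because the length component $d^{\pm}$ of the controls ranges over the unbounded half-line $[0,\infty)$: an arbitrarily large drift imposed by one player is always neutralized by a suitable choice of direction of the opponent, and this cancellation mechanism is exactly what produces the particular form of $F$. The same mechanism underlies the terminal datum: a direct estimate on the game shows that over a short interval $[T-h,T]$ the net displacement one player can impose on $X$ that the other cannot undo is $O(\sqrt h)$, so the Lipschitz bound on $g$ gives $|U^{\pm}(x,T-h)-g(x)|=O(\sqrt h)\to 0$.

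The technically heaviest ingredient is the dynamic programming principle: for every $(x,t)$ and every stopping time $\tau$ with values in $[t,T]$,
\[
U^{+}(x,t)=\sup_{\rho^{+}\in\str}\ \inf_{A^{-}\in\ac}\ \mathbb E\!\left[e^{-r(\tau-t)}\,U^{+}\!\big(X^{(x,t)}(\tau),\tau\big)\right],
\]
and analogously for $U^{-}$. I would prove this with the usual Fleming--Souganidis type machinery: concatenation of admissible strategies, reduction of general admissible controls to piecewise constant ones (in particular truncation of the unbounded component $d^{\pm}$), and measurable selection arguments allowing an almost optimal response to be pasted after $\tau$. Granting the DPP, the passage to the PDE is standard. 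First one verifies the Isaacs identity for the (discounted) infinitesimal generator $\mathcal L^{a,b}$ of \eqref{eq:dynamics}: for smooth $\phi$ with $D\phi\neq 0$,
\[
\sup_{a\in\mathcal H}\ \inf_{b\in\mathcal H}\ \mathcal L^{a,b}\phi\ =\ \inf_{b\in\mathcal H}\ \sup_{a\in\mathcal H}\ \mathcal L^{a,b}\phi\ =\ F(\phi,D\phi,D^{2}\phi),
\]
the drift term forcing the maximizing direction $\theta^{+}$ and the opponent's drift-cancelling response $\theta^{-}$ together producing the term $\tfrac{2}{|D\phi|^{2}}\sum_{i,j}\phi_{x_ix_j}\phi_{x_i}\phi_{x_j}\sigma_i\sigma_j$, the uncontrolled Brownian parts $\sigma_i\,dW_i$ producing $\tfrac12\sum_i\sigma_i^{2}\phi_{x_ix_i}$ and the discounting producing $-r\phi$. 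Then, inserting in the DPP a smooth test function that touches $U^{+}$ from above (resp.\ from below) at a point, expanding with It\^o's formula, dividing by the length of the time increment and letting it go to zero, one obtains $\partial_t\phi+F\ge 0$ (resp.\ $\le 0$) there; at contact points where the test gradient vanishes one argues with the upper and lower semicontinuous envelopes $F^{*},F_{*}$ of $F$ (at $Du=0$ the first term becomes $2\lambda_{\max}(\Sigma D^{2}\phi\,\Sigma)$, resp.\ $2\lambda_{\min}(\Sigma D^{2}\phi\,\Sigma)$, with $\Sigma=\diag(\sigma_1,\ldots,\sigma_n)$), as prescribed by Definition \ref{def:visc-for-F}. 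Together with the terminal estimate this shows that $U^{+}$ and $U^{-}$ (more precisely their semicontinuous envelopes) are viscosity sub- and supersolutions of \eqref{eq:F-eq} with terminal value $g$.

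Finally I would prove a comparison principle for \eqref{eq:F-eq}: every bounded upper semicontinuous subsolution $u$ lies below every bounded lower semicontinuous supersolution $v$ whenever $u(\cdot,T)\le v(\cdot,T)$. This is carried out by doubling the variables in space and time, adding the usual quadratic penalization together with devices ensuring that the relevant supremum is attained and that the inequalities in $t$ are strict, and applying the parabolic theorem of sums; the genuine difficulty is the discontinuity of $F$ along $\{Du=0\}$ inherent to the infinity Laplacian, which is handled by the by-now standard devices for the normalized infinity Laplacian (the offending term being bounded and degenerate elliptic, contact configurations with vanishing gradient are treated separately, and the uniformly elliptic term $\tfrac12\sum_i\sigma_i^{2}u_{x_ix_i}$ together with the strict parabolicity is used to absorb the resulting errors). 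Granting comparison, the theorem follows: applying it in both directions to the viscosity sub- and supersolutions furnished by $U^{+}$ and $U^{-}$ (all with terminal value $g$) forces $U^{+}$ and $U^{-}$ to be continuous and to coincide, so $U^{+}\equiv U^{-}=:U$ on $\mathbb R^{n}\times[0,T]$, and the same argument shows that any other viscosity solution of \eqref{eq:F-eq} equals $U$, giving uniqueness; by Definition \ref{game}, $U$ is then the fair game value of the option. I expect the two main obstacles to be the rigorous dynamic programming principle — complicated here by the strategy formalism and by the unbounded controls $d^{\pm}$ — and the comparison principle, where the discontinuity of $F$ at $Du=0$ must be dealt with carefully.
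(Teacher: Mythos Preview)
Your plan is a genuinely different route from the paper's. You propose the direct Fleming--Souganidis scheme: establish a dynamic programming principle for $U^{\pm}$, derive the viscosity sub/supersolution property from it, and conclude by comparison. The paper instead follows the Swiech/Atar--Budhiraja approximation strategy: it introduces games with bounded action sets $\mathcal{H}_m$, proves (Lemma \ref{thm:value-haha}) that the associated Bellman--Isaacs equations \eqref{par9ap-}--\eqref{par10ap-} have unique viscosity solutions $u_m^{\pm}$, shows via It\^o's formula and a sup-convolution regularization (Lemmas \ref{lem:value-for-smooth} and \ref{lem:value}) that $u_m^{\pm}=U_m^{\pm}$, proves $H_m^{\pm}\to -F$ (Lemma \ref{lem:approxham-}) and equicontinuity of $\{u_m^{\pm}\}$ (Lemma \ref{lem:equicontinuity}), passes to the limit $u_m^{\pm}\to u$ by Arzel\`a--Ascoli (Lemma \ref{lemimportant}), and finally argues directly that $U_m^{\pm}\to U^{\pm}$ using $\ac=\cup_m\ac_m$, $\str=\cup_m\str_m$.

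What this buys: the paper never has to prove a DPP for the value functions themselves, and in particular never faces the DPP with unbounded $d^{\pm}$; all stochastic analysis (It\^o, Krylov's estimate) is done with uniformly bounded coefficients, and the unboundedness is absorbed into the purely PDE limit $H_m^{\pm}\to -F$. Your route is conceptually cleaner but puts the full weight on the DPP step, which is delicate here: the control sets are noncompact, the Isaacs identity you state is not a pointwise $\sup\inf$ over $\mathcal H$ of a bounded quantity but rather the outcome of a cancellation (exactly what Lemma \ref{lem:approxham-} makes rigorous), and the standard DPP proofs assume compact control sets or at least uniform bounds that you would have to recover by truncation --- at which point you are effectively redoing the paper's approximation inside the DPP argument. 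The comparison principle you need is the same one the paper uses (Lemma \ref{thm:value-u-a}, from \cite{gigagis91}).
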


  \begin{remark} In the original Black-Scholes model, the arbitrage free price of a simple European contract is, after changing to log-returns, the unique solution to the Cauchy problem for a linear second order uniformly parabolic equation (heat equation). In contrast, in our context, the underlying PDE is much more involved due to the presence of the non-linear and  completely degenerate infinity Laplace operator. Naturally, the fair game value $U$ is different from the value produced through arbitrage free option pricing.
  \end{remark}

    \begin{remark} \label{inho} Note that our set-up may seem unrealistic: it should be costly
for the players to influence the asset prices. However, as discussed below, we can introduce a running cost  to the model and  Theorem \ref{th2}. In this case, the fair game value of the option will be the unique viscosity solution to the non homogenous problem \eqref{eq:F-eq--} below. However, for simplicity of exposition, we only prove Theorem \ref{th2} without a running cost.
  \end{remark}

  \begin{example}\label{payoff1} Consider $g:\mathbb R^n\to\R$  defined by
\begin{eqnarray}\label{bound+}
g(x_1,...,x_n)=\max\{K-w_1e^{x_1}-....-w_ne^{x_n},0\}
\end{eqnarray}
where $w_1+....+w_n=1$, $w_i\geq 0$, and where $K$, the strike, is a positive real number. Then $g$ represents a put option written on the
index $$w_1S_1(T)+....+w_nS_n(T)=w_1e^{x_1}+....+w_ne^{x_n}$$ with strike $K$. Obviously $g$ satisfies \eqref{eq:standing}
for some $L<\infty$.
\end{example}

\subsection{Relation to the literature and economic relevance} By definition, option pricing is devoted to the valuation of options and other contingent claims, and, since the pioneering papers of Black-Scholes \cite{BS} and Merton \cite{Me}, it has also found its way into many
applications beyond finance.

To put our model into context, it is relevant to first recall that the models in \cite{BS}, \cite{Me} are derived based on stylized assumptions concerning the market. However,  they are based on fundamental economic principles.


While the Black-Scholes model is derived based on the idea of no arbitrage, our model is based on the idea that, in addition, the price is influenced by a tug-of-war between two players: the issuer and the holder of the option. As such, our model
touches on the mathematical modeling of illiquid financial markets and market manipulation, and the use of stochastic differential games for option pricing.

A substantial part of the literature on illiquid financial markets
focuses on either optimal hedging and portfolio liquidation strategies for a single large
investor with market impact, see for example \cite{CJP}, \cite{AFS}, predatory trading, see \cite{BP},\cite{CLV}, \cite{SS}, or the problem
of market manipulation using options, see \cite{J}, \cite{KS}. For example, in \cite{J}, it is shown that by introducing derivatives into an
otherwise complete and arbitrage-free market, certain manipulation strategies may appear for a large
trader.  The strategic interaction between large investors
and implications for market microstructure are discussed in \cite{K}, \cite{FoS}, \cite{BCW}, \cite{CV}. The papers \cite{BP}, \cite{CLV}, \cite{SS},
consider predatory trading, where liquidity providers try to benefit from the liquidity
demand of large investors.

For zero sum stochastic differential games, we  refer to \cite{flemings89}, \cite{HL}, and for nonzero sum games to \cite{F}, \cite{BCR}, see also \cite{N}, \cite{BL}. In general, there seems to exist only a limited literature  devoted to the integration
of game theoretical aspects, like strategic financial decisions of agents, into continuous time frameworks. We are only aware of two lines of research in this direction.

One line  seeks to account for  model uncertainty using control theory. The idea
is to  introduce optimal and risk-free strategies in security markets, based on which traders try to meet their obligations when, for instance, the volatility is unknown or uncertain. This is an alternative approach, which in the end is deterministic, to option pricing and contingent claim hedging.  In \cite{Ly}, the author develops a robust control model, in multi-dimensional markets without friction, assuming that the volatility is unknown and only assumed to lie in some convex region depending on the prices of the underlying securities and time, see also \cite{ALP95} and  \cite{AvP96}. In \cite{Ly}, the PDEs associated to the control problems considered are fully non-linear parabolic equations of Pucci-Bellman type. This approach to option pricing is also developed in \cite{Mc}, where the author proves  that option prices in standard models can be characterized as viscosity solutions of the corresponding Hamilton-Jacobi equations. Similar robust control problems are also considered and developed in \cite{Be}, see the references in \cite{Be} for an overview of contributions in this direction, and \cite{Ko}.

 Another line, which appears in \cite{Z}, develops a method which decouples into three steps: $(i)$, the formulation of a game among players; $(ii)$, the valuation of future uncertain payoffs using (standard) option pricing theory; $(iii)$, the resolution of the game for the optimal strategies, starting from the last decision to be made, using backward induction or alternative methods. A strength of the approach in \cite{Z} is  that $(ii)$ and $(iii)$ are separated steps,  enabling the integration of established methods for each step.

Our model and this paper represent  a new and different line of research devoted to fair game option pricing and intuitively appealing stochastic differential games modeled as tug-of-war games. Within our model, we see at least three areas for further exploration. First, the fair game value of the option is derived based on principles different from no arbitrage and it is an interesting problem to understand the relation between these two different approaches. Second, while this paper is mainly of theoretical nature, it is important to study \eqref{eq:F-eq} from a numerical point of view. This may require novel methods due to the presence of the non-linear and completely degenerate infinity and parabolic infinity Laplace operator. Third, adding American, Asian or other features to the underlying derivative,  our setup paves the way for extensive studies of completely
new PDE problems related to tug-of-war games.

As discussed in Remark \ref{inho}, it may seem unrealistic that the players can influence the asset price without a cost. However, this can be incorporated by introducing a running cost. Let $h:\mathbb R^n\times [0,T]\to\mathbb R$ and assume that there exists $\alpha>0$ such that $h(x,t)\leq-\alpha$ for all $(x,t)\in \mathbb R^n\times [0,T]$. Given $A^\pm=(\theta^\pm,d^\pm)$, $h$, and  $g$ at $T$,  let
 \begin{eqnarray}\label{par1+}
\tilde J^{(x,t)}(A^+,A^-)&:=&\mathbb E\biggl[\int_t^Te^{-r(T-s)}h(X^{(x,t)}(s),s)\, ds\notag\\
&&\quad\quad\quad\quad+e^{-r(T-t)}g(X^{(x,t)}(T))\biggr]
\end{eqnarray}
where, again, the superscript ${(x,t)}$ indicates that the game starts at position $x$ at time $t$. Let $\tilde U^\pm$ be defined as in Definition \ref{game}, but based on $\tilde J$ instead of $J$. Then, see also Remark \ref{inho}, Theorem \ref{th2} can be generalized to this situation and, under sufficient assumptions,
\begin{eqnarray*}
\tilde U^+\equiv \tilde U^-\quad \mbox{on}\quad  \mathbb R^n\times[0,T].
\end{eqnarray*}
Furthermore,  $\tilde U:= \tilde U^+\equiv \tilde U^-$ is the unique viscosity solution to  the non homogenous problem
\begin{equation}
\label{eq:F-eq--}
\begin{split}
\begin{cases}
 \partial_t u+F(u,Du,D^2u)=-h(x,t),&\text{in }\mathbb R^n\times (0,T), \\
 u(x,T)=g(x),& \text{on }  \mathbb R^n.
\end{cases}
\end{split}
\end{equation}
 In this case, $\tilde U(x,t)$ can be referred to as the fair game value of the option accounting for the cost of influencing the asset price (transaction cost).

\subsection{Brief outline of the proof of Theorem \ref{th2}} The difficulty encountered when proving Theorem \ref{th2} stems from the unboundedness
of controls and strategies and from the potential degeneracy of the underlying dynamics. To overcome the unboundedness of the action sets, we first approximate the original stochastic differential game by a sequence of games with bounded controls ${\ac}_m$ and bounded strategies ${\str}_m$, with bounds tending to $\infty$ as $m\to\infty$. The upper and lower values of the associated stochastic dynamic games are defined by
\begin{eqnarray*}
&&U_m^+(x,t)=\sup_{\rho^+\in{\str}_m}\inf_{A^-\in {\ac}_m}J^{(x,t)}(\rho^+(A^-),A^-),\\
&&U_m^-(x,t)=\inf_{\rho^-\in{\str}_m}\sup_{A^+\in {\ac}_m}J^{(x,t)}(A^+,\rho^-(A^+)),
\end{eqnarray*}
where $J^{(x,t)}$ is given in \eqref{par1+}.
The upper and lower values are unique.
An important step is to connect the value functions to viscosity solutions to the following terminal value problems involving Bellman-Isaacs type equations:
\begin{eqnarray}\label{tron1}
\partial_t u-H_m^+(u,Du,D^2 u)&=&0\ \ \ \ \ \ \ \mbox{ in }\mathbb R^n\times (0,T),\notag\\
u(x,T)&=&g(x)\ \ \ \mbox{ on }\mathbb R^n,
\end{eqnarray}
\begin{eqnarray}\label{tron2}
\partial_t u-H_{m}^-(u,Du,D^2 u)&=&0\ \ \ \ \ \ \ \mbox{ in }\mathbb R^n\times (0,T),\notag\\
u(x,T)&=&g(x)\ \ \ \mbox{ on }\mathbb R^n.
\end{eqnarray}
The operators $H_m^\pm$ are introduced later and, here, we simply note that the equations in \eqref{tron1}, \eqref{tron2}, are non-linear parabolic equations and that these equations are the relevant Bellman-Isaacs equations associated to our problem in the case of bounded controls. Comparison principles and uniqueness of viscosity solutions to \eqref{tron1}, \eqref{tron2} follow along the lines of Giga, Goto, Ishii and Sato, see \cite{gigagis91}, by using doubling of variables as well as the theorem of sums.
Existence is established by the construction of appropriate barriers and by the use of Perron's method, see Section 2.

 In Lemma \ref{lem:value}, we
prove that the unique solutions to  \eqref{tron1}, \eqref{tron2},  $u_m^\pm$, satisfy
\begin{eqnarray}\label{tron3}
u_m^+=U_m^+,\ u_m^-=U_m^-.
\end{eqnarray}
In other words,  the unique solutions to stated terminal value problems produce the upper and lower values of the associated stochastic games. The proof uses Ito's formula and estimates for stochastic differential equations.

To continue, we prove in Lemma \ref{lem:approxham-}  that
\[
\begin{split}
&H_m^\pm\to -F \qquad \mbox{ as }\quad m\to\infty.
\end{split}
\]
To complete the proof of  Theorem \ref{th2}, a key step
is to  prove that there exists $m_0\in\mathbb \{1,2,\ldots\}$, such that the families
$$\{u_m^\pm:\ m\geq m_0\}$$
 are equicontinuous (Lemma \ref{lem:equicontinuity}). The proof is based on a barrier argument. These results enable us to conclude by the Arzel\`a-Ascoli theorem, see Lemma \ref{lemimportant}, that there exists a continuous function $u$ such that
\begin{equation}
\label{eq:sol-convergence}
\begin{split}
u_m^\pm(x,t)\to u(x,t),
\end{split}
\end{equation}
and that the limit $u$ is the unique solution to \eqref{eq:F-eq}.

Finally,  at the end of Section \ref{sec:limits}, we prove Theorem \ref{th2} by showing that, as the bounds on the controls increase, then a subsequence of corresponding value functions converge to a value function for the game with unbounded controls. This, \eqref{tron3} and \eqref{eq:sol-convergence} yield the result.

Our approach is influenced by the works of Swiech \cite{swiech96} as well as Atar and Budhiraja \cite{atarb10}. A different approach to stochastic games is due to Fleming and Souganidis \cite{flemings89}, see also \cite{BL}. Indeed, the approach in \cite{flemings89} is based on establishing a dynamic programming principle based on careful approximation arguments, working directly with the value functions.  The authors then prove that the value functions also solve the associated Bellman-Isaacs equations. However, our model contains degenerate diffusion and unbounded controls, and our approach relies on viscosity theory for non-linear and degenerate partial differential equations already from the beginning. In particular, instead of establishing a dynamic programming principle for the value function,  we show, as explained above, that the unique viscosity solution to the corresponding partial differential equation satisfies a dynamic programming principle.

Our work is  developed based on the recently established connections between discrete time tug-of-war games and infinity harmonic functions  \cite{PSSW}, and tug-of-war games with noise in the context of $p$-harmonic functions \cite{PS}. Here, we also mention the approach based on non-linear mean value formulas developed in \cite{manfredipr10c} and \cite{manfredipr12}. Continuous time stochastic differential games and infinity harmonic functions  were considered in \cite{atarb10}, and \cite{atarb11}. The equation considered in this paper coincides, modulo the presence of the model related constants and a change of the time direction,  with the normalized $p$-Laplace operator considered in \cite{manfredipr10c}, in connection with normalized $p$-parabolic equations and tug-of-war games, see also \cite{banerjeeg12a} and \cite{does11}. The parabolic equation involving a normalized infinity Laplacian is studied in \cite{juutinenk06}.

\setcounter{equation}{0} \setcounter{theorem}{0}

\section{Preliminaries}

Recall that   $(\Omega,\mathcal{F},\{\mathcal{F}_s\},\mathbb P)$  denotes a complete filtered probability space with a right-continuous filtration supporting a $(n+1)$-dimensional and $\{\mathcal{F}_s\}$-adapted Brownian motion $W=(W_1,...,W_{n+1})$. We assume that all components are standard independent Brownian motions.
\begin{definition}[Controls]
\label{admissiblecont} Let
$$
A:=A(s):=(\theta(s),d(s))
$$
 be a progressively measurable stochastic process on $(\Omega,\mathcal{F},\{\mathcal{F}_s\},\mathbb P)$ taking values in
$ \mathcal{H}=\mathbb S^{n-1}\times [0,\infty)$. Then $A$ is called a control. We set $$\Lambda:={\Lambda}(A):=\sup_{\om\in \Om} \sup_{s\in [0,T]} d(s,\omega) \in [0,\infty].$$ The control $A$ is said to be admissible provided  $\Lambda<\infty$, and we denote the set of all admissible controls by  $\mathcal{{AC}}$. 
\end{definition}

\begin{definition}[Strategies]
\label{strategy} A strategy is a mapping
$$
\rho:\ac \to \ac
$$ such that if
$$\mathbb P(A(s)=\tilde A(s)\mbox{ for a.e. }s\in[0,\tau])=1\quad\mbox{and}\quad \Lambda(A)=\Lambda(\tilde A)$$
then
$$\mathbb P(A^\prime (s)=\tilde A^\prime (s)\mbox{ for a.e. }s\in[0,\tau])=1\quad\mbox{and}\quad\Lambda(A^\prime) =\Lambda(\tilde A^\prime)$$
for every $\tau\in[0,T]$, where $A':=\rho(A),\  \tilde A':=\rho(\tilde A).$ Given a strategy $\rho$, we set $${\Lambda}(\rho):=\sup_{A\in \ac}{\Lambda}(\rho(A))\in[0,\infty].$$
A strategy is said to be admissible provided ${\Lambda}(\rho)<\infty$, and we denote the set of all admissible strategies by  $\str$.
\end{definition}

In general, below we will consider controls which depend on the current location $X(t)$. Furthermore, for brevity, we drop the word 'admissible' from now on.
Later, we will approximate controls and strategies by uniformly bounded ones.
\begin{definition}\label{strategyb}
For $m\in \{1,2,\ldots\}$, we define
\begin{eqnarray}\label{par1}
{\ac}_m&:=&\{A\in\ac:\ {\Lambda}(A)\leq m\},\notag\\
{\str}_m&:=&\{\rho\in \str:\ \Lambda(\rho)\leq  m\}.
\end{eqnarray}
\end{definition}

\begin{definition}\label{game+} The upper and lower values of the underlying stochastic dynamic game, with controls in
 ${\ac}_m$ and strategies in ${\str}_m$, are defined as
\begin{eqnarray*}
&&U_m^+(x,t)=\sup_{\rho^+\in{\str}_m}\inf_{A^-\in {\ac}_m}J^{(x,t)}(\rho^+(A^-),A^-),\\
&&U_m^-(x,t)=\inf_{\rho^-\in{\str}_m}\sup_{A^+\in {\ac}_m}J^{(x,t)}(A^+,\rho^-(A^+)).
\end{eqnarray*}
\end{definition}

\subsection{Bellman-Isaacs equations with bounded action sets: viscosity solutions} Let $\Sigma=\diag(\sigma_1,\ldots,\sigma_n)$ and let $\mathcal{M}(n)$ denote the set of all symmetric $n\times n$-dimensional matrices. Given a matrix, or vector $M$, we let $M'$ denote the transpose of $M$. We define
$\Phi:\mathbb S^{n-1}\times\mathbb S^{n-1}\times\mathbb R_+\times\mathbb R_+\times\mathbb R^n\times\mathcal{M}(n)\to \mathbb R$ through
\[
\begin{split}
\Phi(\theta^+&,\theta^-,d^+,d^-,p,M)\\
=&-\half (\theta^+-\theta^-)'\Sigma M\Sigma (\theta^+-\theta^-)\\
&-\half \tr(\Sigma^2 M)-(d^++d^-)(\theta^++\theta^-)\cdot p-\mu\cdot p.
\end{split}
\]

 Let $\mbox{LSC}(\mathbb R^n\times [0,T])$ be the set of lower semi-continuous functions, i.e.\ all functions
 $$f: (\R^n\times [0,T]) \to \R\cup \{\infty\}$$ such that
\begin{equation*}
\liminf_{(y,s)\to (x,t)} f(y,s) \geq f(x,t).
\end{equation*}
Let
$\mbox{USC}(\mathbb R^n\times [0,T])$ be the set of upper semi-continuous functions, i.e. all functions
$$
f: (\mathbb R^n\times [0,T])\to \R\cup\{-\infty\}$$ such that
\begin{equation*}
\limsup _{(y,s)\to (x,t)} f(y,s) \leq f(x,t).
\end{equation*}
We define $\mbox{LSC}_l (\mathbb R^n\times [0,T])$ to consist
of functions $h\in \mbox{LSC}(\mathbb R^n\times [0,T])$ which satisfy the (linear) growth condition
\begin{eqnarray} \label{eq:growth}\label{eq:growth-bound}
|h(x,t)| \leq  c(1+|x|)
\end{eqnarray}
and for some $c\in [1,\infty)$. The space $\mbox{USC}_l (\mathbb R^n\times [0,T])$ is defined analogously.
Furthermore,
\[
\begin{split}
\mbox{C}_l(\mathbb R^n\times [0,T])=\mbox{USC}_l (\mathbb R^n\times [0,T])\cap \mbox{LSC}_l (\mathbb R^n\times [0,T]).
\end{split}
\]
In addition, ignoring $t$ we define $\mbox{C}_l(\mathbb R^n)$ by analogy.
Finally, $C^{1,2}(\mathbb R^n\times [0,T])$ denotes the space of functions that are once continuously differentiable in time and twice continuously differentiable in space.

Given $m\in\{1,2,\ldots\}$, we let
\[
\begin{split}
\mathcal{H}_m=\{(\theta,d)\in\mathcal{H}\,:\, d\leq m\},
\end{split}
\]
and we define $\tilde H_m^+,\tilde H_m^-:\mathbb R^n\times\mathcal{M}(n)\to \mathbb R$ through
\[
\begin{split}
\tilde H^+_m(p,M)=&\sup_{(\theta^-,d^-)\in\mathcal{H}_m}\ \ \inf_{(\theta^+,d^+)\in\mathcal{H}_m}\Phi(\theta^+,\theta^-,d^+,d^-,p,M),\\
\tilde H^-_m(p,M)=&\inf_{(\theta^+,d^+)\in\mathcal{H}_m}\ \ \sup_{(\theta^-,d^-)\in\mathcal{H}_m}\Phi(\theta^+,\theta^-,d^+,d^-,p,M).
\end{split}
\]
We define $H_m^+,H_m^-:\mathbb R\times\mathbb R^n\times\mathcal{M}(n)\to \mathbb R$ through
\begin{eqnarray}\label{par6}
H_m^+(\xi,p,M)&=&\tilde H_m^+(p,M)+r\xi,\notag\\
H_m^-(\xi,p,M)&=&\tilde H_m^-(p,M)+r\xi.
\end{eqnarray}
Next, we introduce terminal value problems involving Bellman-Isaacs type equations associated to the game with bounded controls.
\begin{eqnarray}\label{par9ap-}
\partial_t u-H_m^+(u,Du,D^2 u)&=&0\ \ \ \ \ \ \ \mbox{ in }\mathbb R^n\times (0,T),\notag\\
u(x,T)&=&g(x)\ \ \ \mbox{ on }\mathbb R^n,
\end{eqnarray}
\begin{eqnarray}\label{par10ap-}
\partial_t u-H_{m}^-(u,Du,D^2 u)&=&0\ \ \ \ \ \ \ \mbox{ in }\mathbb R^n\times (0,T),\notag\\
u(x,T)&=&g(x)\ \ \ \mbox{ on }\mathbb R^n.
\end{eqnarray}
A suitable concept of solution to the above equations is the notion of viscosity solutions. Recall our standing assumption \eqref{eq:standing} for $g$.

\begin{definition}
\label{vis1}
\begin{enumerate}[(a)]
\item A function $\bar u^+_m\in \mbox{LSC}_l (\mathbb R^n\times [0,T])$ is a viscosity supersolution to \eqref{par9ap-} if $\bar u^+_m(x,T)\geq g(x)$ for all $x\in\mathbb R^n$ and if the following holds.  If $(x_0,t_0)\in\mathbb R^n\times (0,T)$ and we have $\phi\in C^{1,2}(\mathbb R^n\times [0,T])$ such that
\begin{eqnarray*}
(i)&&  \bar u^+_m(x_0, t_0) = \phi(x_0, t_0),\\
(ii)&& \bar u^+_m(x, t) > \phi(x, t)\mbox{ for }(x,t)\neq (x_0,t_0),
\end{eqnarray*}
then
\begin{eqnarray*}
\partial_t \phi(x_0,t_0)\le H_m^+(\bar u^+_m(x_0,t_0),D\phi (x_0,t_0),D^2 \phi(x_0, t_0)).
\end{eqnarray*}
\item A function $\underbar{u}^+_m\in \mbox{USC}_l (\mathbb R^n\times [0,T])$ is a viscosity subsolution to  \eqref{par9ap-} if $\underbar{u}^+_m(x,T)\leq g(x)$ for all $x\in\mathbb R^n$ and if the following holds.  If $(x_0,t_0)\in\mathbb R^n\times (0,T)$ and we have $\phi\in C^{1,2}(\mathbb R^n\times [0,T])$ such that
\begin{eqnarray*}
(i)&&  \underbar{u}^+_m(x_0, t_0) = \phi(x_0, t_0),\\
(ii)&& \underbar{u}^+_m(x, t) < \phi(x, t)\mbox{ for } (x,t)\neq (x_0,t_0),
\end{eqnarray*}
then
\begin{eqnarray*}
\partial_t \phi(x_0,t_0)\ge H_m^+(\underbar{u}^+_m(x_0,t_0),D\phi (x_0,t_0),D^2 \phi(x_0, t_0)).
\end{eqnarray*}
\item If $u_m$ is both a  viscosity supersolution and a viscosity subsolution to
\eqref{par9ap-},  then $u_m$ is a viscosity solution to \eqref{par9ap-}.
\item The definitions for the  equation \eqref{par10ap-} are analogous with $H_m^+$, $\bar u^+_m$, $\underbar{u}^+_m$ replaced by $H_m^-$, $\bar u_m^-$, $\underbar{u}_m^-$.
\end{enumerate}
\end{definition}
\begin{remark}
\label{rem:deg-ell}
Note that $H_m^+(u,p,X)$ is continuous with respect to  $u,p,X$ also when $p=0$. In addition, $H_m^+$ is degenerate elliptic in the sense that
\begin{equation}
\label{eq:degenerate-elliptic}
\begin{split}
H_m^+(u,p,X)\le H_m^+(u,p,Y),
\end{split}
\end{equation}
whenever $X\ge Y$. The analogous statements hold for $H_m^-(u,p,X)$.
\end{remark}

\subsection{Bellman-Isaacs equation with bounded action sets: existence and uniqueness of viscosity solutions}

\begin{lemma} \label{thm:value-} \label{lem:comparison-m} Let $\underbar{u}_m^+, \bar u_m^+\in \mbox{C}_l(\mathbb R^n\times [0,T])$ and $\underbar{u}_m^-, \bar{u}_m^-\in \mbox{C}_l(\mathbb R^n\times [0,T])$, be viscosity sub- and supersolutions to \eqref{par9ap-} and \eqref{par10ap-}, respectively.
Then,
$$
\underbar{u}_m^-\leq \bar{u}_m^- \quad \text{and}\quad   \underbar{u}_m^+\leq \bar{u}_m^+.
$$
\end{lemma}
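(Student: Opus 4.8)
The plan is to prove a comparison principle for the nonlinear parabolic terminal value problems \eqref{par9ap-} and \eqref{par10ap-} by the standard doubling-of-variables technique combined with the parabolic theorem of sums, following the framework of Giga, Goto, Ishii and Sato \cite{gigagis91} as indicated in the introduction. I will carry out the argument for the $H_m^+$-equation \eqref{par9ap-}; the case of \eqref{par10ap-} is identical with $H_m^+$ replaced by $H_m^-$. Two features of the operator make the standard machinery directly applicable: by Remark \ref{rem:deg-ell}, $H_m^+$ is continuous in all arguments (including at $p=0$, since the singular $|Du|^{-2}$ factor of $F$ is not present in $H_m^+$ — the bounded-control supremum/infimum regularizes it) and it is degenerate elliptic, satisfying \eqref{eq:degenerate-elliptic}.

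First I would reduce to a strict comparison: for $\eta>0$ and $\lambda$ large (depending only on $r$), the function $\underbar{u}_m^{+}(x,t) - \eta e^{-\lambda t}$ is a strict subsolution, so it suffices to show $\underbar{u}_m^{+}\le \bar u_m^{+}+\eta$ for every $\eta>0$ and let $\eta\to 0$; this handles the zeroth-order term $r\xi$, which has the ``wrong'' sign for a naive maximum-principle argument, via the exponential weight. Next, to localize, I would use the linear growth of $\underbar{u}_m^{+},\bar u_m^{+}\in\mbox{C}_l(\mathbb R^n\times[0,T])$ together with a radial barrier of the form $\beta(1+|x|^2)^{1/2}e^{K(T-t)}$ (with $K=K(m)$ chosen so that this function is a supersolution of \eqref{par9ap-} — here one uses that $\Sigma$ is fixed, $\mu$ is fixed, and $d^\pm$ are bounded by $m$, so $H_m^+$ applied to such a barrier is controlled): adding a small multiple $\beta$ of it to $\bar u_m^{+}$ forces the supremum of $\underbar{u}_m^{+}-\bar u_m^{+}$, if positive, to be attained on a compact set, uniformly in $\beta$. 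Then I would double variables: consider
\[
\Psi(x,y,t) = \underbar{u}_m^{+}(x,t) - \bar u_m^{+}(y,t) - \frac{j}{2}|x-y|^2 - (\text{barrier terms}),
\]
let $(x_j,y_j,t_j)$ be a maximum point, use the standard lemma that $j|x_j-y_j|^2\to 0$ and that any limit point of $(x_j,t_j)$ attains the positive supremum with $t<T$ (the terminal condition $\underbar{u}_m^{+}(\cdot,T)\le g\le \bar u_m^{+}(\cdot,T)$ excludes $t=T$), and apply the parabolic theorem of sums to obtain $(a, p_j, X_j)\in \bar{\mathcal P}^{2,+}\underbar{u}_m^{+}(x_j,t_j)$ and $(a, p_j, Y_j)\in \bar{\mathcal P}^{2,-}\bar u_m^{+}(y_j,t_j)$ with $p_j = j(x_j-y_j)$ and $X_j\le Y_j$ (plus the barrier corrections, which are smooth and harmless).

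The crucial estimate is then to plug these into the sub/supersolution inequalities, subtract, and bound
\[
H_m^+(\underbar{u}_m^{+}(x_j,t_j), p_j, X_j) - H_m^+(\bar u_m^{+}(y_j,t_j), p_j, Y_j)
\]
from above. The second-order part of $\Phi$ is $-\tfrac12(\theta^+-\theta^-)'\Sigma M\Sigma(\theta^+-\theta^-) - \tfrac12\tr(\Sigma^2 M)$, which is linear and monotone in $M$, so the contribution $X_j\le Y_j$ controls it directly; the first-order part $-(d^++d^-)(\theta^++\theta^-)\cdot p - \mu\cdot p$, with $d^\pm\le m$ and $\theta^\pm$ on the sphere, is Lipschitz in $p$ with a constant depending on $m$, so its contribution is bounded by $C_m\, j|x_j-y_j|^2 \to 0$; the $r\xi$-term, after the strict-subsolution reduction, produces a strictly negative quantity. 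Passing to the limit $j\to\infty$ and then $\beta\to 0$, $\eta\to 0$, yields the contradiction, hence $\underbar{u}_m^{+}\le\bar u_m^{+}$. The main obstacle — and the only place real care is needed — is the interplay of the unbounded domain with the quadratic cross-term of $\Phi$: one must choose the radial barrier so that it remains a supersolution uniformly in the (bounded) controls while still dominating the linear growth of the sub/supersolutions, and one must check that the barrier's derivatives, when substituted into the estimate above after the theorem of sums, do not spoil the sign; this is routine given that $\Sigma$, $\mu$ are fixed and the controls are bounded by $m$, but it is where the hypothesis $\underbar{u}_m^{\pm},\bar u_m^{\pm}\in\mbox{C}_l$ (rather than merely bounded) is genuinely used.
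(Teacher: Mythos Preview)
Your proposal is essentially correct and follows the same route the paper invokes: the paper does not give its own proof of this lemma but simply refers to \cite{gigagis91} (the argument starting on page~27 there), and what you have sketched is precisely the Giga--Goto--Ishii--Sato doubling-of-variables/theorem-of-sums argument adapted to the present operator.

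One small correction: the zeroth-order term $+r\xi$ in $H_m^\pm$ does \emph{not} have the wrong sign. With $r\ge 0$ it makes the operator proper in the usual sense, so at a positive interior maximum of $\underbar u_m^+-\bar u_m^+$ the contribution $r(\underbar u_m^+-\bar u_m^+)$ already works in your favor; the perturbation $-\eta e^{-\lambda t}$ is harmless but not needed for that reason (it is still useful to force strictness if $r=0$ or to handle the terminal time). This does not affect the validity of your outline.
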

For the proof of the above comparison principle,  see \cite{gigagis91}, in particular see the argument starting from page 27 in \cite{gigagis91}. Similarly,
by comparison with a sufficiently large constant it follows that solutions are not merely of linear growth: they are bounded.

\begin{lemma} \label{thm:value-ha} Let $y\in\mathbb R^n$ and let $L$ be the Lipschitz constant of $g$. Consider $0<\ep\ll 1$, and let
\begin{eqnarray*}
\bar{w}(x,t)=g(y)+\frac A{\ep^2}(T-t)+2L(|x-y|^2+\ep)^{1/2},\notag\\
\underline{w}(x,t)=g(y)-\frac A{\ep^2}(T-t)-2L(|x-y|^2+\ep)^{1/2}.
\end{eqnarray*}
Then we can choose $A$, independent of $y$, $\ep$ and $m$, so that
$\bar{w}$ and $\underline{w}$ are viscosity super- and subsolutions to \eqref{par9ap-} and  \eqref{par10ap-}.
\end{lemma}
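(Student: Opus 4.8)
The plan is to verify the defining inequalities directly, using that the barriers are smooth. Write $\psi(x):=(|x-y|^2+\ep)^{1/2}$, so that $\bar w=g(y)+A\ep^{-2}(T-t)+2L\psi$ and $\underline w=g(y)-A\ep^{-2}(T-t)-2L\psi$. Since $\ep>0$ the function $\psi$ is $C^\infty$ on $\mathbb R^n$, hence $\bar w,\underline w\in C^{1,2}(\mathbb R^n\times[0,T])\cap \mbox{C}_l(\mathbb R^n\times[0,T])$. By the degenerate ellipticity and the continuity (including at $p=0$) of $H_m^\pm$ recorded in Remark \ref{rem:deg-ell}, it suffices to check the classical pointwise inequalities
\[
\partial_t\bar w\le H_m^\pm(\bar w,D\bar w,D^2\bar w),\qquad \partial_t\underline w\ge H_m^\pm(\underline w,D\underline w,D^2\underline w)
\]
at every $(x_0,t_0)\in\mathbb R^n\times(0,T)$, together with the terminal inequalities $\bar w(x,T)\ge g(x)\ge\underline w(x,T)$; the latter are immediate from \eqref{eq:standing}, since $2L\psi(x)\ge 2L|x-y|\ge|g(x)-g(y)|$.

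One first records the elementary identities $D\psi=(x-y)/\psi$, so $|D\psi|\le1$, and $D^2\psi=\psi^{-1}\big(I-\psi^{-2}(x-y)(x-y)'\big)$, which is positive semidefinite with $\|D^2\psi\|\le\psi^{-1}\le\ep^{-1/2}$ and $\tr D^2\psi\le n\psi^{-1}\le n\ep^{-1/2}$ (using $\psi\ge\sqrt\ep$). Consequently, at any point, $|D\bar w|,|D\underline w|\le2L$, $D^2\bar w=2LD^2\psi\ge0$, $D^2\underline w=-2LD^2\psi\le0$, $\|D^2\bar w\|,\|D^2\underline w\|\le2L\ep^{-1/2}$, $|\tr(\Sigma^2D^2\bar w)|\le 2L(\max_i\sigma_i^2)\,n\,\ep^{-1/2}$, while $\partial_t\bar w=-A\ep^{-2}$ and $\partial_t\underline w=A\ep^{-2}$.

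The one delicate point is that the term $-(d^++d^-)(\theta^++\theta^-)\cdot p$ in $\Phi$ contains the controls $d^\pm\in[0,m]$, so a crude bound on $\tilde H_m^\pm$ would diverge as $m\to\infty$; this is circumvented by exploiting the equal-and-opposite roles of $\theta^+$ and $\theta^-$ in that term. For the supersolution property of $\bar w$ one needs a lower bound on $H_m^\pm(\bar w,D\bar w,D^2\bar w)=\tilde H_m^\pm(D\bar w,D^2\bar w)+r\bar w$; since $r\ge0$ and $\bar w>0$, the summand $r\bar w$ is nonnegative and may be dropped. Fixing the control of the $\theta^-$-player at $d^-=0$ and $\theta^-=-(x_0-y)/|x_0-y|$ (any unit vector if $x_0=y$, where the term already vanishes) bounds both $\tilde H_m^+=\sup_{(\theta^-,d^-)}\inf_{(\theta^+,d^+)}\Phi$ and $\tilde H_m^-=\inf_{(\theta^+,d^+)}\sup_{(\theta^-,d^-)}\Phi$ from below by $\inf_{\theta^+,d^+}\Phi(\theta^+,\theta^-,d^+,0,D\bar w,D^2\bar w)$; with this choice $(\theta^++\theta^-)\cdot D\bar w=\theta^+\cdot D\bar w-|D\bar w|\le0$, so the $d^+$-term is nonnegative, the infimum over $d^+$ is attained at $d^+=0$, and all $m$-dependence disappears. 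The surviving terms $-\tfrac12(\theta^+-\theta^-)'\Sigma D^2\bar w\,\Sigma(\theta^+-\theta^-)-\tfrac12\tr(\Sigma^2D^2\bar w)-\mu\cdot D\bar w$ are bounded below, via the estimates above, by $-C_1\ep^{-1/2}-C_2$, with $C_1,C_2$ depending only on $L$, $\max_i\sigma_i$, $n$ and $|\mu|$. Symmetrically, for the subsolution property of $\underline w$ one needs an upper bound on $\tilde H_m^\pm(D\underline w,D^2\underline w)$ (and $r\underline w\le r\sup g\le rL$); fixing the control of the $\theta^+$-player at $d^+=0$, $\theta^+=-(x_0-y)/|x_0-y|$ forces $(\theta^++\theta^-)\cdot D\underline w\ge0$, so the $d^-$-term is nonpositive, and the same estimates give $\tilde H_m^\pm(D\underline w,D^2\underline w)\le C_1\ep^{-1/2}+C_2$.

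It remains to choose $A$: the supersolution inequality for $\bar w$ reduces to $-A\ep^{-2}\le-C_1\ep^{-1/2}-C_2$, and the subsolution inequality for $\underline w$ to $A\ep^{-2}\ge C_1\ep^{-1/2}+C_2+rL$; since $0<\ep\le1$ gives $\ep^{-2}\ge\max(\ep^{-1/2},1)$, both hold once $A\ge 2(C_1+C_2+rL)$, which is independent of $y$, $\ep$ and $m$, as required. The main obstacle is precisely the uniform-in-$m$ control of $\tilde H_m^\pm$ in the preceding paragraph; this is the only place where the tug-of-war structure of $\Phi$ — the opposite signs carried by $\theta^+$ and $\theta^-$ against the gradient — is essential.
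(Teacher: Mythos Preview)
Your proof is correct and follows essentially the same approach as the paper: both verify the terminal inequality via the Lipschitz bound on $g$, bound the $d^\pm$-independent terms of $\Phi$ by $C\ep^{-1/2}$, neutralize the potentially unbounded drift term $-(d^++d^-)(\theta^++\theta^-)\cdot p$ by fixing the appropriate player's direction at $-(x_0-y)/|x_0-y|$, and then choose $A$ to dominate the remaining $O(\ep^{-1/2})$ error. The only cosmetic differences are that you also pin $d^\mp=0$ (the paper leaves it free, which works equally well since the sign argument covers all $d^\mp$) and that you treat $H_m^+$ and $H_m^-$ simultaneously, whereas the paper writes out only $H_m^+$ and declares $H_m^-$ analogous.
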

\begin{proof} We only prove the result for \eqref{par9ap-} as the proof for \eqref{par10ap-} is analogous. First, we immediately see that
\[
\begin{split}
\underline{w}(x,T)\leq g(x)\leq \bar{w}(x,T)
,\end{split}
\]
whenever $x\in \mathbb R^n$. To prove that $\bar{w}$ is a  viscosity supersolution to \eqref{par9ap-} we need to verify that
 \[
\begin{split}
\partial_t \bar{w}(x,t)-&H_m^+(\bar{w}(x,t),D\bar{w}(x,t),D^2 \bar{w}(x,t))\leq 0
,\end{split}
\]
whenever $(x,t)\in\mathbb R^n\times[0,T]$.  In the following, we split
\[
\begin{split}
\Phi:=\Phi(\theta^+,\theta^-,d^+,d^-,D\bar{w},D^2\bar{w})
\end{split}
\]
into
\begin{eqnarray}\label{par2ll}
\Phi&=&\Phi_1+\Phi_2,
\end{eqnarray}
where
\[
\begin{split}
\Phi_1&:=-\half (\theta^+-\theta^-)'\Sigma D^2\bar{w}\Sigma (\theta^+-\theta^-)\notag\\
&\hspace{1 em}-\half \tr(\Sigma^2 D^2\bar{w})-
\mu\cdot D\bar{w},\notag\\
\Phi_2&:=-(d^++d^-)(\theta^++\theta^-)\cdot D\bar{w}.
\end{split}
\]
By a straightforward calculation, we see that
\begin{eqnarray}\label{par2ll+}
&&|\Phi_1|\leq cL(|x-y|^2+\ep)^{-1/2}, 
\end{eqnarray}
 for all $(\theta^-,d^-)\in\mathcal{H}_m$, $(\theta^+,d^+)\in\mathcal{H}_m$, and for some $c$ independent of $y$, $L$, $m$ and $\ep$. In particular, there is no $m$ dependence as $d^\pm$ plays no explicit role in $\Phi_1$.

We next estimate
\[
\begin{split}
&\sup_{(\theta^-,d^-)\in\mathcal{H}_m}\ \ \inf_{(\theta^+,d^+)\in\mathcal{H}_m}\Phi_2\notag\\
&=2L\sup_{(\theta^-,d^-)\in\mathcal{H}_m}\ \ \inf_{(\theta^+,d^+)\in\mathcal{H}_m} \Big(
-(d^++d^-)(\theta^++\theta^-)\cdot \frac {(x-y)}{(|x-y|^2+\ep)^{1/2}}\Big).
\end{split}
\]
We can, without loss of generality, assume that  $x\neq y$. Let $\theta^-=-(x-y)/|x-y|$ and note that
\[
\begin{split}
-(d^++d^-)(\theta^++\theta^-)\cdot \frac {(x-y)}{(|x-y|^2+\ep)^{1/2}}\geq 0.
\end{split}
\]
Combining this estimate and \eqref{par2ll+}
with \eqref{par2ll}, recalling the term$-r \ol w(x,t)$, we see that
\[
\begin{split}
\bar{w}_t(x,t)-&H_m^+(\bar{w}(x,t),D\bar{w}(x,t),D^2 \bar{w}(x,t))\notag\\
&\leq-\frac A{\ep^2}+ cL(|x-y|^2+\ep)^{-1/2}\notag\\
&\leq -\frac A{\ep^2}+ cL\ep^{-1/2},
\end{split}
\]
whenever $(x,t)\in\mathbb R^n\times[0,T]$. Hence, if we let $A=2cL\ge cL\eps^{3/2}$ then  $-A{\ep^{-2}}+ cL\ep^{-1/2}\leq 0$ and we can conclude that
$\bar{w}$ is a supersolution to \eqref{par9ap-}.  The proof that $\underline{w}$ is a  viscosity subsolution to \eqref{par9ap-} is almost  analogous. In particular, we first note that
\[
\begin{split}
-r\underline{w}(x,t)=-r\big(g(y)-\frac A{\ep^2}(T-t)-2L(|x-y|^2+\ep)^{1/2}\big)\ge -r g(y).
\end{split}
\]
Then, observing that the right hand side is bounded, adjusting the constants, and repeating the argument above we can conclude that $\underline w$ is a subsolution.
\end{proof}
From now on we fix $A$ so that
$\bar{w}$ and $\underline{w}$ are viscosity super- and subsolutions, as stated in
Lemma \ref{thm:value-ha}, to \eqref{par9ap-} and \eqref{par10ap-}.

\begin{lemma}
 \label{cor:value-ha}
If $u_m^+$ and $u_m^-$ are viscosity solutions to
 \eqref{par9ap-} and \eqref{par10ap-}, respectively, then
 \[
\begin{split}
\underline{w}&\leq {u}_m^\pm\leq \bar{w}.
\end{split}
\]
\end{lemma}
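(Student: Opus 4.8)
The plan is to deduce the result directly from the comparison principle of Lemma~\ref{lem:comparison-m}, applied against the barriers constructed in Lemma~\ref{thm:value-ha}.

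First I would record that, with the constant $A$ now fixed as stated before Lemma~\ref{cor:value-ha}, Lemma~\ref{thm:value-ha} guarantees that $\bar w$ is a viscosity supersolution and $\underline w$ a viscosity subsolution to \emph{both} \eqref{par9ap-} and \eqref{par10ap-}. Next I would check that all four functions entering the argument belong to $\mbox{C}_l(\mathbb R^n\times [0,T])$, as required by the hypotheses of Lemma~\ref{lem:comparison-m}: the barriers $\bar w,\underline w$ are continuous and, since $(|x-y|^2+\ep)^{1/2}\le |x|+|y|+\ep^{1/2}$, they satisfy the linear growth bound \eqref{eq:growth-bound}; and any viscosity solution $u_m^\pm$ lies in $\mbox{C}_l(\mathbb R^n\times [0,T])=\mbox{USC}_l(\mathbb R^n\times [0,T])\cap \mbox{LSC}_l(\mathbb R^n\times [0,T])$ by Definition~\ref{vis1}, being simultaneously a subsolution and a supersolution; in fact, as noted after Lemma~\ref{lem:comparison-m}, $u_m^\pm$ is even bounded.

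The conclusion then follows by applying Lemma~\ref{lem:comparison-m} twice for each sign. For \eqref{par9ap-}: $u_m^+$ is a viscosity subsolution and $\bar w$ a viscosity supersolution, so $u_m^+\le \bar w$; while $\underline w$ is a viscosity subsolution and $u_m^+$ a viscosity supersolution, so $\underline w\le u_m^+$. The argument for $u_m^-$ against \eqref{par10ap-} is identical. Combining these inequalities yields $\underline w\le u_m^\pm\le \bar w$, which is the claim.

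I do not expect any genuine obstacle here: the statement is an immediate corollary of Lemmas~\ref{lem:comparison-m} and \ref{thm:value-ha}. The only point that deserves an explicit line is the verification that $\bar w$, $\underline w$ and $u_m^\pm$ meet the growth/regularity hypotheses of the comparison principle, and this is routine from the explicit form of $\bar w,\underline w$ and from Definition~\ref{vis1}.
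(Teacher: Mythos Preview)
Your proposal is correct and follows exactly the same approach as the paper, which simply states that the lemma is an immediate consequence of Lemma~\ref{thm:value-ha} and Lemma~\ref{thm:value-}. The additional verification that $\bar w$, $\underline w$, and $u_m^\pm$ lie in $\mbox{C}_l(\mathbb R^n\times[0,T])$ is a reasonable elaboration but adds nothing beyond what the paper's one-line proof intends.
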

\begin{proof} The lemma is an immediate consequence of Lemma \ref{thm:value-ha} and  Lemma \ref{thm:value-}.
\end{proof}

\begin{lemma} \label{thm:value-haha}
 There exist unique viscosity solutions $u_m^+$ and $u_m^-$  to
\eqref{par9ap-} and \eqref{par10ap-}, respectively.
\end{lemma}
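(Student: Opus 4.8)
The plan is to establish existence via Perron's method, using the barriers from Lemma~\ref{thm:value-ha} together with the comparison principle from Lemma~\ref{lem:comparison-m}, and to obtain uniqueness directly from that same comparison principle. I will only argue for \eqref{par9ap-}; the case of \eqref{par10ap-} is identical after replacing $H_m^+$ by $H_m^-$. For uniqueness: if $u_1,u_2$ are both viscosity solutions in $\mbox{C}_l(\mathbb R^n\times[0,T])$, then each is in particular a subsolution and a supersolution, so applying Lemma~\ref{lem:comparison-m} twice (once with $(\underbar u,\bar u)=(u_1,u_2)$, once with $(u_2,u_1)$) gives $u_1\le u_2$ and $u_2\le u_1$, hence $u_1=u_2$. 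Note also that Lemma~\ref{lem:comparison-m} requires the sub/supersolutions to have linear growth, which is why the barriers $\underline w,\bar w$, being globally bounded by an affine function of $|x|$ (indeed bounded), are admissible competitors.

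For existence I would run Perron's method in the standard viscosity-solutions framework (see Crandall--Ishii--Lions, and the parabolic adaptation in \cite{gigagis91}). Define
\[
u_m^+(x,t):=\sup\{\,v(x,t)\,:\, v\in\mbox{USC}_l(\mathbb R^n\times[0,T]),\ v\text{ is a subsolution to }\eqref{par9ap-},\ \underline w\le v\le\bar w\,\}.
\]
This set is nonempty since $\underline w$ itself belongs to it by Lemma~\ref{thm:value-ha}, and every such $v$ satisfies $v\le\bar w$, so the supremum is finite and sandwiched between $\underline w$ and $\bar w$; in particular $u_m^+\in\mbox{C}_l$ follows once we know it is both USC and LSC. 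The classical Perron argument then shows: (i) the upper semicontinuous envelope $(u_m^+)^*$ is a subsolution, by the standard stability/closedness of subsolutions under suprema; and (ii) the lower semicontinuous envelope $(u_m^+)_*$ is a supersolution, using the usual bump-construction argument — if $(u_m^+)_*$ failed the supersolution inequality at some interior point, one could add a small quadratic bump to a test function to produce a strictly larger subsolution still lying below $\bar w$, contradicting maximality; here Remark~\ref{rem:deg-ell} (continuity of $H_m^+$ including at $p=0$ and degenerate ellipticity) is exactly what makes the bump argument go through. Applying the comparison principle Lemma~\ref{lem:comparison-m} to the subsolution $(u_m^+)^*$ and the supersolution $(u_m^+)_*$ yields $(u_m^+)^*\le(u_m^+)_*$; since trivially $(u_m^+)_*\le(u_m^+)^*$, we get $(u_m^+)^*=(u_m^+)_*=u_m^+$, so $u_m^+$ is continuous and is a viscosity solution.

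The one point needing genuine care — and I expect it to be the main technical obstacle — is the \emph{terminal condition}. Perron's method as above produces a solution on $\mathbb R^n\times(0,T)$ with $\underline w\le u_m^+\le\bar w$, but one must check that $u_m^+(x,T)=g(x)$ in the classical (not merely relaxed) sense, and likewise that the terminal data is attained continuously, so that $u_m^+$ qualifies as a genuine solution in the sense of Definition~\ref{vis1}. This is precisely where the barriers pay off a second time: since $\underline w(x,T)=g(x)=\bar w(x,T)$ for every $x$, the squeeze $\underline w\le u_m^+\le\bar w$ forces $u_m^+(\cdot,T)=g$ pointwise, and moreover forces continuity up to $t=T$ because $\bar w-\underline w=\tfrac{2A}{\eps^2}(T-t)+4L(|x-y|^2+\eps)^{1/2}$ can be made small near $(y,T)$ by first choosing $\eps$ small and then $t$ close to $T$ — this is the role of having a barrier centered at each $y\in\mathbb R^n$. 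A secondary bookkeeping point is to confirm that $u_m^+$ indeed has at most linear growth so that Lemma~\ref{lem:comparison-m} applies throughout; this is immediate from $\underline w\le u_m^+\le\bar w$ and the (in fact bounded) growth of the barriers. With these checks in place, existence and uniqueness both follow, completing the proof.
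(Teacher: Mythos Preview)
Your proposal is correct and is precisely the approach the paper takes: it simply cites \cite{giga06} for Perron's method and notes that the barriers from Lemma~\ref{thm:value-ha} ensure the terminal data is attained. The only bookkeeping remark is that the comparison you invoke between the envelopes $(u_m^+)^*$ and $(u_m^+)_*$ requires the semicontinuous version of Lemma~\ref{lem:comparison-m} (which is what \cite{gigagis91} actually proves), not merely the $\mbox{C}_l$ statement recorded in the paper.
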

The proof of this result can be found in \cite{giga06} and it is based on Perron's method. Using this method, one constructs the so called upper and lower Perron solutions by taking $\inf/\sup$ over the suitable super/subsolutions. That the constructed solutions assume the correct terminal data can then be proved by using the above barriers.

\setcounter{equation}{0} \setcounter{theorem}{0}

\section{Solving the stochastic dynamic game\\ with bounded action sets}

  The purpose of the section is to prove the following lemma.

\begin{lemma} \label{lem:value} Let $u_m^+$ and $u_m^-$ be the unique solutions to \eqref{par9ap-} and \eqref{par10ap-}, respectively, ensured
by Lemma \ref{thm:value-haha}. Then, \begin{eqnarray*}
&&u_m^+(x,t)=U_m^+(x,t):=\sup_{\rho^+\in{\str}_m}\inf_{A^-\in {\ac}_m}J^{(x,t)}(\rho^+(A^-),A^-),\notag\\
&&u_m^-(x,t)=U_m^-(x,t):=\inf_{\rho^-\in{\str}_m}\sup_{A^+\in {\ac}_m}J^{(x,t)}(A^+,\rho^-(A^+)),
\end{eqnarray*}
whenever  $(x,t)\in \mathbb R^n\times [0,T]$.
\end{lemma}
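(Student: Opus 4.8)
The plan is to prove both identities by a verification argument, i.e.\ by showing that the unique viscosity solution of the Bellman--Isaacs equation itself satisfies the dynamic programming (sub/super) optimality principle up to the terminal time $T$. Since \eqref{par9ap-}, \eqref{par10ap-} and the games defining $U_m^+$, $U_m^-$ differ only by interchanging the two players, it suffices to treat $u_m^+$ and $U_m^+$. By construction of $\Phi$ and $H_m^+$, equation \eqref{par9ap-} is the Bellman--Isaacs equation of the upper game: writing $\mathcal L^{a,b}$ for the second-order generator of the state dynamics \eqref{eq:dynamics} driven by controls $a=(\theta^+,d^+)$ and $b=(\theta^-,d^-)$, \eqref{par9ap-} reads $\partial_t u+\inf_{b\in\mathcal H_m}\sup_{a\in\mathcal H_m}\big(\mathcal L^{a,b}u-ru\big)=0$. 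The structural feature used throughout is that $\mathcal H_m$ is compact (as $d\le m$), so these extrema are attained and can be selected measurably in the state and in the derivatives of $u_m^+$.

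Pretending momentarily that $u_m^+$ is regular enough for It\^o's formula, the two required inequalities go as follows. For $U_m^+\le u_m^+$: fix $(x,t)$ and an arbitrary $\rho^+\in\str_m$; if $b^\ast$ attains the outer infimum at $(x,Du_m^+,D^2u_m^+)$, then $\sup_a\big(\mathcal L^{a,b^\ast}u_m^+-ru_m^+\big)=-\partial_t u_m^+$, so $\partial_t u_m^++\mathcal L^{a,b^\ast}u_m^+-ru_m^+\le0$ for \emph{every} response $a$, in particular for the one prescribed by $\rho^+$. Driving \eqref{eq:dynamics} with the minimizing control $A^-$ generated by $b^\ast$ (admissible, since its $d$-component is $\le m$) while the maximizer plays $\rho^+(A^-)$, the process $s\mapsto e^{-r(s-t)}u_m^+(X^{(x,t)}(s),s)$ is a supermartingale; optional sampling at $s=T$ with $u_m^+(\cdot,T)=g$ gives $J^{(x,t)}(\rho^+(A^-),A^-)\le u_m^+(x,t)$, hence $\inf_{A^-}J^{(x,t)}(\rho^+(A^-),A^-)\le u_m^+(x,t)$, and the supremum over $\rho^+$ yields $U_m^+\le u_m^+$. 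For $U_m^+\ge u_m^+$: here the maximizer may respond to the opponent, and for any minimizing control $A^-$ its current value $b$ satisfies $\sup_a\big(\mathcal L^{a,b}u_m^+-ru_m^+\big)\ge\inf_{b'}\sup_a\big(\mathcal L^{a,b'}u_m^+-ru_m^+\big)=-\partial_t u_m^+$, so the maximizer can pick $a^\ast$ with $\partial_t u_m^++\mathcal L^{a^\ast,b}u_m^+-ru_m^+\ge-\varepsilon'$; choosing $\varepsilon'$ small relative to $\varepsilon$ and $T$, the discounted process becomes an approximate submartingale and one obtains a strategy $\rho^+_\varepsilon\in\str_m$ with $\inf_{A^-}J^{(x,t)}(\rho^+_\varepsilon(A^-),A^-)\ge u_m^+(x,t)-\varepsilon$, whence $U_m^+\ge u_m^+-\varepsilon$; letting $\varepsilon\to0$ finishes the case.

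The main obstacle is precisely the parenthetical hypothesis above: $u_m^+$ is only a continuous viscosity solution (bounded, by Lemma \ref{cor:value-ha}), so It\^o's formula and the pointwise derivatives $\partial_t u_m^+,Du_m^+,D^2u_m^+$ are not literally available, and, because the players may switch off $W_{n+1}$, the diffusion is genuinely degenerate, so one cannot invoke non-degeneracy to control the time the state spends on the Lebesgue-null set where second derivatives fail to exist. I would dispose of this in the standard way: replace $u_m^+$ by its sup- and inf-convolutions, which are semiconvex, resp.\ semiconcave -- hence twice differentiable a.e.\ by Alexandrov's theorem -- and are viscosity sub-, resp.\ supersolutions of \eqref{par9ap-} up to an error vanishing with the convolution parameter; apply the It\^o--Krylov formula to these regularizations, the one-sided Hessian bound from semiconcavity/semiconvexity supplying exactly the sign needed to run the supermartingale/submartingale estimates despite the degeneracy; then pass to the limit using local uniform convergence of the regularizations to $u_m^+$ together with standard moment estimates for \eqref{eq:dynamics} with controls in $\ac_m$. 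Two routine points remain: turning the pointwise ($\varepsilon'$-)optimal selections of $a$ and $b$ into genuine elements of $\ac_m$ and $\str_m$ -- most cleanly via a piecewise-constant-in-time construction on a fine mesh, which also sidesteps well-posedness of SDEs with merely measurable feedback coefficients, the discretization error being absorbed into $\varepsilon$ -- and handling the terminal layer $t\uparrow T$ via the continuity of $u_m^+$ and the barriers $\bar w,\underline w$ of Lemma \ref{thm:value-ha}. The identity $u_m^-=U_m^-$ then follows verbatim after interchanging the two players.
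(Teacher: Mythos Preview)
Your proposal is correct and follows essentially the same verification-via-It\^o route as the paper: regularize by sup/inf-convolution, build admissible piecewise-constant-in-time near-optimal controls and strategies on a fine mesh, and handle the terminal layer with the barriers of Lemma~\ref{thm:value-ha}. The only notable difference is that the paper, rather than applying an It\^o--Krylov-type formula directly to the semiconvex $u_\eps$, further mollifies to obtain a smooth $u_\eps^\delta$, first proves the smooth case separately (Lemma~\ref{lem:value-for-smooth}), and then passes $\delta\to0$, $\eps\to0$ using Krylov's occupation-time estimate to control the error on the small set where convergence of the derivatives is not uniform.
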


\subsection{Proof of Lemma \ref{lem:value} assuming additional regularity on $u_m^\pm$} We here prove Lemma \ref{lem:value} assuming smoothness on  $u_m^+$ and $u_m^-$.
\begin{lemma}\label{lem:value-for-smooth}
 Let $u_m^+$ and $u_m^-$ be the unique solutions to \eqref{par9ap-} and \eqref{par10ap-}, respectively, ensured
by Lemma \ref{thm:value-haha}.  Assume, in addition, that
$u_m^\pm,\partial_tu_m^\pm, Du_m^\pm,D^2u_m^\pm$, are Lipschitz continuous in $\mathbb R^n\times [0,T)$. Then Lemma \ref{lem:value} holds.
\end{lemma}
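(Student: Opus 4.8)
The plan is to establish the two identities by a verification argument based on It\^o's formula. Fix $(x,t)\in\Rn\times[0,T]$ and admissible controls $A^+=(\theta^+,d^+)$, $A^-=(\theta^-,d^-)$ of the two players, and let $X=X^{(x,t)}$ solve \eqref{eq:dynamics}. Since the controls take values in $\mathcal{H}_m$, the drift and diffusion coefficients of \eqref{eq:dynamics} are bounded, so $X$ has finite moments of every order on $[0,T]$; together with the assumed Lipschitz (hence at most linear) growth of $u_m^+,\partial_tu_m^+,Du_m^+,D^2u_m^+$, this makes the stochastic integral arising below a true martingale. A direct computation starting from \eqref{eq:dynamics} shows that for $\phi\in C^{1,2}(\Rn\times[0,T])$ It\^o's formula gives
\[
d\big(e^{-r(s-t)}\phi(X(s),s)\big)=e^{-r(s-t)}\big(\partial_t\phi-r\phi-\Phi(\theta^+(s),\theta^-(s),d^+(s),d^-(s),D\phi,D^2\phi)\big)(X(s),s)\,ds+dM_s
\]
for a suitable martingale $M$. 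Applying this with $\phi=u_m^+$, integrating over $[t,T]$, using $u_m^+(\cdot,T)=g$ together with the equation $\partial_t u_m^+=H_m^+(u_m^+,Du_m^+,D^2u_m^+)=\tilde H_m^+(Du_m^+,D^2u_m^+)+ru_m^+$ from \eqref{par9ap-} and \eqref{par6}, and taking expectations, I would arrive at the representation
\begin{equation*}
\begin{split}
J^{(x,t)}(A^+,A^-) = u_m^+(x,t) + \mathbb{E}\int_t^T e^{-r(s-t)}\Big[ & \tilde H_m^+\big(Du_m^+(X(s),s),D^2u_m^+(X(s),s)\big) \\ & {}- \Phi\big(\theta^+(s),\theta^-(s),d^+(s),d^-(s),Du_m^+(X(s),s),D^2u_m^+(X(s),s)\big) \Big]\, ds.
\end{split}
\end{equation*}

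From here the two inequalities are read off from the saddle-point structure $\tilde H_m^+=\sup_{(\theta^-,d^-)\in\mathcal{H}_m}\inf_{(\theta^+,d^+)\in\mathcal{H}_m}\Phi$, using that $\mathcal{H}_m=\sfe\times[0,m]$ is compact and $\Phi$ continuous, so that both extrema are attained and admit measurable selections. To prove $U_m^+\le u_m^+$, I would fix an arbitrary strategy $\rho^+\in{\str}_m$ of the maximizing player and let the minimizing player use the feedback that, at time $s$ and state $y=X(s)$, chooses $(\theta^-(s),d^-(s))$ realizing the outer supremum in $\tilde H_m^+(Du_m^+(y,s),D^2u_m^+(y,s))$; then, whatever response $(\theta^+(s),d^+(s))\in\mathcal{H}_m$ is produced by $\rho^+$, one has $\Phi(\theta^+(s),\theta^-(s),d^+(s),d^-(s),\cdot)\ge\inf_{(\theta^+,d^+)}\Phi(\theta^+,\theta^-(s),d^+,d^-(s),\cdot)\ge\tilde H_m^+(\cdot)$, so the integrand in the representation is $\le 0$, hence $J^{(x,t)}(\rho^+(A^-),A^-)\le u_m^+(x,t)$; taking the infimum over $A^-$ and the supremum over $\rho^+$ gives $U_m^+\le u_m^+$. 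For $U_m^+\ge u_m^+$, I would instead let the maximizing player use the strategy that, given $A^-$, responds at time $s$ and state $y=X(s)$ with $(\theta^+(s),d^+(s))$ realizing the inner infimum $\inf_{(\theta^+,d^+)\in\mathcal{H}_m}\Phi(\theta^+,\theta^-(s),d^+,d^-(s),Du_m^+(y,s),D^2u_m^+(y,s))$; this is an admissible strategy in ${\str}_m$, the integrand becomes $\ge 0$ for every $A^-$, and so $U_m^+\ge u_m^+$. The identity $u_m^-=U_m^-$ follows identically from $\tilde H_m^-=\inf_{(\theta^+,d^+)}\sup_{(\theta^-,d^-)}\Phi$, with the minimizing player (who plays via strategies) using the feedback realizing the outer $\sup_{(\theta^-,d^-)}$ and the maximizing player (who plays via controls) using the feedback realizing the inner $\inf_{(\theta^+,d^+)}$.

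I expect the main obstacle to be making these feedback controls rigorous: each is defined through the path of $X$, which itself depends on the control, so one must know that the resulting SDE --- whose coefficients are now state-dependent and merely measurable --- admits a solution along which the representation and the estimates above remain valid. Here the structure of \eqref{eq:dynamics} is decisive: the diffusion matrix equals $\Sigma^2+\Sigma(\theta^+-\theta^-)(\theta^+-\theta^-)'\Sigma\succeq(\min_i\sigma_i^2)I$, so the $X$-dynamics are uniformly elliptic in the non-control directions and a weak solution exists for bounded measurable feedback coefficients. Alternatively --- and perhaps cleaner in the present smooth setting --- one replaces each feedback by its time-discretization, which on each subinterval of a partition uses the state at the left endpoint, so that on each subinterval the SDE has ordinary adapted coefficients, and then passes to the limit as the mesh tends to $0$; the error incurred in the integrand of the representation is then controlled by the Lipschitz bounds on $Du_m^+,D^2u_m^+$ and the moment estimates for $X$. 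A minor additional point, used implicitly above, is to check that the constructed strategies are admissible in the sense of Definition \ref{strategy}, i.e.\ non-anticipating with $\Lambda(\rho^\pm)\le m$; this is immediate since the selectors at time $s$ depend only on $(\theta^-(s),d^-(s))$, $X(s)$ and $s$, and take their $d$-component in $[0,m]$.
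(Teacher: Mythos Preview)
Your proposal is essentially the same verification argument as the paper's: apply It\^o's formula to $e^{-r(s-t)}u_m^\pm(X(s),s)$, use the PDE to replace $\partial_t u_m^\pm-ru_m^\pm$ by $\tilde H_m^\pm$, and read off the two inequalities from the $\sup\inf/\inf\sup$ structure of $\Phi$ by letting the appropriate player play a (near-)optimal selector. The paper simply inverts the order of presentation: it carries out the time-discretization from the outset---choosing piecewise-constant $\varepsilon$-optimal controls based on the state at partition points, controlling the resulting $O((\Delta t)^{3/2})$ error via the Lipschitz bounds on $u_m^\pm,\partial_t u_m^\pm,Du_m^\pm,D^2u_m^\pm$ and the moment estimate $\mathbb E|X(s)-x|\le c(\Delta t)^{1/2}$, and then interleaving the construction with the opponent's strategy step by step---whereas you write the clean continuous-time representation first and relegate the discretization to a closing remark. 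That remark is not cosmetic: it is where the actual work lies, precisely because of the circularity you flag (the feedback at time $s$ depends on $X(s)$, which depends on both players' controls, one of which is the strategy applied to the other), and the paper's iterative construction is exactly the mechanism that breaks this circularity while keeping the strategy admissible in the sense of Definition~\ref{strategy}. Your alternative via weak solutions to the uniformly nondegenerate SDE is plausible but would still require care to produce an admissible \emph{strategy} (a map $\ac\to\ac$), not just a solution for each fixed opponent control.
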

\begin{proof} The proof is based on the connection between solutions and value functions provided by the Ito formula, in connection with suitable discretized controls chosen based on the solution.  The discretization error can be estimated by utilizing the smoothness assumptions. At the end, we pass to a limit with the discretization parameter. We only supply the proof
in the case of $u_m^-$, the proof for $u_m^+$ being analogous.

Given $k\in \{1,2,\ldots\}$ and $(x,t)\in \Rn\times [0,T]$, we can choose, as $u_m^-$ is a solution to \eqref{par10ap-},
a control $(\theta_0^+,d_0^+)\in \mathcal{H}_m$ such that
\begin{equation}
\label{eq:small}
\begin{split}
&\hspace{-1 em}\sup_{(\theta^-,d^-)\in \mathcal{H}_m}\biggl\{ \Phi(\theta_0^+,\theta^-,d_0^+,d^-,Du_m^-(x,t),D^2u_m^-(x,t))+ru_m^-(x,t)\biggr \}\\
&\hspace{3 em}\leq \partial_t u_m^-(x,t)+ k^{-1}.
\end{split}
\end{equation}
Based on $(\theta_0^+,d_0^+)=(\theta_{0,1}^+,....,\theta_{0,n}^+, d_0^+)$ and an arbitrary, but fixed, control $(\theta^-,d^-)\in {\ac}_m$, we let
$X^0(s):=X^{0,(x,t)}(s)$ be defined as in \eqref{eq:dynamics} assuming also the initial condition $X^0(t)=x$. In the following, we let
\[
\begin{split}
\Phi_0^{X}(s):&=\Phi(\theta_0^+,\theta^-(s),d_0^+,d^-(s),Du_m^-(X^0(s),s),D^2u_m^-(X^0(s),s)),\\
\Phi_0^x(s):&=\Phi(\theta_0^+,\theta^-(s),d_0^+,d^-(s),Du_m^-(x,s),D^2u_m^-(x,s)).
\end{split}
\]
As $u_m^-,\partial_tu_m^-, Du_m^-,D^2u_m^-$ are Lipschitz continuous in $\mathbb R^n\times [0,T)$, we can apply the Ito formula to $u_m^-(X^0(s),s)$ and
\[
\begin{split}
du_m^-&(X^0(s),s)=\partial_t u_m^-(X^0(s),s)ds+\sum_{i=1}^n \partial_{x_i}u_{m}^-(X^0(s),s)dX^0_i(s)\\
&+\half \sum_{i,j=1}^n  \partial_{x_ix_j}u_{m}^-(X^0(s),s) \, dX^0_i(s)\, dX^0_j(s)\\
=&(\partial_t u_m^-( X^0(s),s)- \Phi_0^{X}(s))ds\\
&+\sum_{i=1}^n \partial_{x_i}u_{m}^-( X^0(s),s)(\sigma_idW_i(s)+ \sigma_i(\theta^+_{0,i}-\theta_i^-(s))dW_{n+1}(s)).
\end{split}
\]
Using this
\[
\begin{split}
d(&e^{-rs}u_m^-( X^0(s),s))\\
=&e^{-rs}(\partial_t u_m^-( X^0(s),s)- \Phi_0^{X}(s)-ru_m^-(X^0(s),s))ds\\
&+e^{-rs}\sum_{i=1}^n \partial_{x_i}u_{m}^-( X^0(s),s)(\sigma_idW_i(s)+ \sigma_i(\theta^+_{0,i}-\theta_i^-(s))dW_{n+1}(s)).
\end{split}
\]
Hence, if we let $\Delta t=(T-t)/k$, then
\[
\begin{split}
\mathbb E& [e^{-r(t+\Delta t)}u_m^-( X^0(t+\Delta t),t+\Delta t))-e^{-rt}u_m^-( X^0(t),t)]\\
&=\mathbb E \biggl [\int_t^{t+\Delta t}e^{-rs}(\partial_t u_m^-( X^0(s),s)- \Phi_0^{X}(s)-ru_m^-( X^0(s),s))ds\biggr ],
\end{split}
\]
and
\begin{eqnarray}
\label{eq:small++++}
&&u_m^-(x,t)\notag\\
&=&\mathbb E [e^{-r\Delta t}u_m^-( X^0(t+\Delta t),t+\Delta t)]\\
&-&\mathbb E \biggl [\int_t^{t+\Delta t}e^{-r(s-t)}(\partial_t u_m^-( X^0(s),s)- \Phi_0^X(s)-ru_m^-( X^0(s),s))ds\biggr ].\notag
\end{eqnarray}
We let
\begin{equation}
\label{eq:small++++l}
\begin{split}
I_1&=-\mathbb E \biggl [\int_t^{t+\Delta t}e^{-r(s-t)}(\partial_t u_m^-( X^0(s),s)-\partial_t u_m^-(x,t))ds\biggr ],\\
I_2&=-\mathbb E \biggl [\int_t^{t+\Delta t}e^{-r(s-t)}( \Phi_0^x(t)- \Phi_0^X(s))ds\biggr ],\\
I_3&=-\mathbb E \biggl [\int_t^{t+\Delta t}e^{-r(s-t)}r(u_m^-( X^0(t),t))-u_m^-( X^0(s),s))ds\biggr ].
\end{split}
\end{equation}
Using this notation, we observe that
\begin{equation}
\label{eq:small++++k}
\begin{split}
&\hspace{0 em}u_m^-(x,t)\\
&\hspace{0 em}=\mathbb E [e^{-r\Delta t}u_m^-( X^0(t+\Delta t),t+\Delta t)]+I_1+I_2+I_3\\
&\hspace{1 em}-\mathbb E \biggl [\int_t^{t+\Delta t}e^{-r(s-t)}\Big(\partial_t u_m^-(x,t)- \Phi_0^x(t)-ru_m^-(x,t)\Big) ds\biggr ].
\end{split}
\end{equation}
Next, using  \eqref{eq:small} we see that
\begin{equation}
\label{eq:use-of-eq}
\begin{split}
-\mathbb E& \biggl [\int_t^{t+\Delta t}e^{-r(s-t)}\Big(\partial_t u_m^-(x,t)- \Phi_0^x(t)-ru_m^-(x,t)\Big)ds\biggr ]\\
&\leq k^{-1}\Delta t.
\end{split}
\end{equation}
Furthermore, by Lipschitz continuity  of $u_m^-, \partial_tu_m^-, Du_m^-,D^2u_m^-$, we can conclude that
\begin{equation}
\label{eq:discretization-error}
\begin{split}
|I_1|+|I_2|+|I_3|&\leq c \mathbb E \biggl [\int_t^{t+\Delta t}(| X^0(s)-x|+\Delta t)ds\biggr ]\\
&\leq c(\Delta t)^2+ c\mathbb E \biggl [\int_t^{t+\Delta t}| X^0(s)-x|ds\biggr ],
\end{split}
\end{equation}
for some generic constant $c$.  To estimate the expectation in the previous estimate, we will have to use the equation satisfied by
$ X^0(s)$. Recall that
\[
\begin{split}
 X^0_i(s)-x_i=&\int_t^{s}\Big(\mu_i+(d^+_{1}+d^-(\tau))(\theta_{0,i}^++\theta^-_i(\tau))\Big)d\tau\\
&+\int_t^{s}\sigma_idW_i(\tau)+ \int_t^{s}\sigma_i (\theta_{0,i}^+-\theta^-_i(\tau))dW_{n+1}(\tau).
\end{split}
\]
Hence, simply using the H{\"o}lder inequality and the Ito isometry, we see that
\[
\begin{split}
\mathbb E \biggl [\int_t^{t+\Delta t}| X^0(s)-x|ds\biggr ]&= \biggl [\int_t^{t+\Delta t}\mathbb E[| X^0(s)-x|]ds\biggr ]\\
&\leq \biggl [\int_t^{t+\Delta t}(\mathbb E[| X^0(s)-x|^2])^{1/2}ds\biggr ]\notag\\
&\leq c(\Delta t)^{3/2},
\end{split}
\]
and where $c$ is allowed to depend on $m$ defining the class ${\ac}_m$. Combining
the above estimates, we conclude that
\[
\begin{split}
|I_1|+|I_2|+|I_3|\leq c((\Delta t)^2+ (\Delta t)^{3/2})\leq c(\Delta t)^{3/2}.
\end{split}
\]
Hence returning to \eqref{eq:small++++}, also recalling \eqref{eq:small++++l} and \eqref{eq:small++++k}, we see that
\begin{eqnarray}\label{eq:small++++ag}
u_m^-(x,t)&\leq&\mathbb E [e^{-r\Delta t}u_m^-( X^{0,(x,t)}(t+\Delta t),t+\Delta t)] \notag\\
&&+c(\Delta t)^{3/2} +k^{-1}\Delta t,
\end{eqnarray}
where $\Delta t=(T-t)/k$.

We now use
\eqref{eq:small++++ag}  in an iterative construction. Indeed, we let  $t_j=t+j\Delta t$ for $j=0,...,k-1$ and we first note, using \eqref{eq:small++++ag},
that for $j=0$ we have
\begin{eqnarray}
u_m^-(x,t_0)&\leq&\mathbb E [e^{-r\Delta t}u_m^-( X^{0,(x,t_0)}(t_1),t_1)] \notag\\
&&+c(\Delta t)^{3/2} +k^{-1}\Delta t.\notag
\end{eqnarray}
We next consider $j=1$. Then, using that $u_m^-$ is a solution to \eqref{par10ap-}, that  $\mathcal{H}_m$ is a separable metric space, and uniform
continuity, it follows that  there exist a sequence $\{(\theta_{1l}^+, d_{1l}^+)\}_{l=1}^\infty \subset \mathcal{H}_m$ and a covering $\{B(y_{1l},r_{1l})\}_{l=1}^\infty$ of $\Rn$, such that
\[
\begin{split}
\sup_{(\theta^-,d^-)\in \mathcal{H}_m}&\biggl \{\Phi( \theta_{1l}^+,\theta^-, d_{1l}^+,d^-,Du_m^-(y,t_1),D^2u_m^-(y,t_1))+ru_m^-(y,t_1)\biggr \}\\
&\leq \partial_t u_m^-(y,t_1)+ k^{-1}\qquad \mbox{whenever}\quad y\in B(y_{1l},r_{1l}).
\end{split}
\]
We let $\psi_1:\mathbb R^n\to \mathcal{H}_m$
be defined as
$$\psi_1(y)=(\psi_1^\theta(y),\psi_1^d(y)):=( \theta_{1l}^+, d_{1l}^+)\mbox{ if } y\in B(y_{1l},r_{1l})\setminus\cup_{i=1}^{l-1}B(y_{1i},r_{1i}).$$
Furthermore, this time we let
\[
\begin{split}
 \Phi_1^y(s):=\Phi(\psi_1^\theta(y),\theta^-,\psi_1^d(y),d^-,Du_m^-(y,t_1),D^2u_m^-(y,t_1)).
\end{split}
\]
Then,
\begin{eqnarray}
\sup_{(\theta^-,d^-)\in \mathcal{H}_m}\biggl \{ \Phi_1^y(s)+ru_m^-(y,t_1)\biggr \}\leq& \partial_t u_m^-(y,t_1)+ k^{-1},\notag
\end{eqnarray}
whenever $y\in \Rn$.
We now let
\[
\begin{split}
( \theta_1^+(s), d_1^+(s))=( \theta_0^+,  d_0^+),
\end{split}
\]
for $s\in[t_0,t_1)=[t,t+\Delta t)$,
and
\begin{equation}
\label{eq:smallha2}
\begin{split}
( \theta_1^+(s), d_1^+(s))=(\psi_1^\theta( X^{0}(t_1)),\psi_1^d(( X^{0}(t_1))),
\end{split}
\end{equation}
for $s\in[t_1,t_2)=[t+\Delta t,t+2\Delta t)$. In this way, we have constructed a new control $( \theta_1^+, d_1^+)\in {\ac}_m$. Next, with this $( \theta_1^+, d_1^+)\in {\ac}_m$ and an arbitrary, but fixed, control $(\theta^-,d^-)\in {\ac}_m$, we construct
$ X^1(s)=X^{1,(x,t)}(s)$ for $s\in [t_0,t_2)$ satisfying the initial condition  $X^1(t)=x$  and the dynamics in \eqref{eq:dynamics}. By construction, it follows that $X^{1}(s)=X^{0}(s)$ for $s\in [t_0,t_1)$. We can now repeat the argument above to conclude that
\begin{eqnarray}\label{eq:small++++agpp}
u_m^-( X^0(t_1),t_1)&\leq&\mathbb E [e^{-r\Delta t}u_m^-( X^{1}(t_2),t_2)|X^0(t_1)]\notag\\
&& +c(\Delta t)^{3/2} +k^{-1}\Delta t.\notag
\end{eqnarray}
In particular, we see that
\begin{eqnarray}
\label{eq:small++++agl}
\mathbb E[e^{-r\Delta t}u_m^-( X^{0}(t_1),t_1)]&\leq&\mathbb E [e^{-2r\Delta t}u_m^-( X^{1}(t_2),t_2)]\\
&&+c(\Delta t)^{3/2} +k^{-1}\Delta t.\notag
\end{eqnarray}
Combining this with \eqref{eq:small++++ag}, we conclude that
%
%
\begin{eqnarray}
u_m^-(x,t)&\leq&\mathbb E [e^{-2r\Delta t}u_m^-( X^{1}(t_2),t_2)]\notag\\
&& +c2(\Delta t)^{3/2} +2k^{-1}\Delta t.\notag
\end{eqnarray}
By carefully iterating the above argument, we get a sequence of controls $( \theta_j^+, d_j^+)\in {\ac}_m$, for $j\in\{0,1,...,k-1\}$, and a sequence of processes
$X^j(s)=X^{j,(x,t)}(s)$ based on  the controls $( \theta_j^+, d_j^+)$ and an arbitrary, but fixed, $(\theta^-,d^-)\in {\ac}_m$. In particular, we have
\begin{eqnarray}
u_m^-(x,t)&\leq&\mathbb E [e^{-jr\Delta t}u_m^-( X^{j}(t_j),t_j)]\notag\\
&& +cj(\Delta t)^{3/2} +jk^{-1}\Delta t,\notag
\end{eqnarray}
for all $j\in\{1,...,k\}$. Applying this inequality with $j=k$, we conclude  that
\begin{eqnarray}\label{eq:small++++aglit+}
u_m^-(x,t)&\leq&\mathbb E [e^{-r(T-t)}u_m^-( X^{k}(T),T))]+c(T-t)(\Delta t)^{1/2} +\Delta t\notag\\
&=&\mathbb E [e^{-r(T-t)}g( X^{k}(T))]+c(T-t)(\Delta t)^{1/2} +\Delta t,
\end{eqnarray}
where we  also used $u_m^-(x,T)=g(x)$ in the last line.

Summing up, given $k\in \{1,2,\ldots\}$ and $(x,t)\in \Rn\times [0,T]$, we have constructed controls $( \theta_k^+, d_k^+)$ such that for an arbitrary, but fixed, $(\theta^-,d^-)\in {\ac}_m$,  \eqref{eq:small++++aglit+} holds with $X^{k}$ defined as in \eqref{eq:dynamics} based on
 $( \theta_k^+, d_k^+)$, $(\theta^-,d^-)$, and $X^{k}(t)=x$.

 Next, consider $\rho^-\in\mathcal{ S}_m$. Based on the above argument, we now construct
 $( \theta_j^+, d_j^+)$ and $( \theta^-, d^-)|_{[t_j,t_{j+1})}$, $j=0,1,...,k-1$,  using $\rho^-$. Indeed, given  $(x,t)\in \Rn\times [0,T]$, we first let
 $( \theta_0^+, d_0^+)$ be as above and set
\begin{eqnarray}
(\theta^-,d^-)|_{[t_0,t_1)}:=\rho^-(\theta^+_0, d^+_0)|_{[t_0,t_1)}.\notag
\end{eqnarray}
Then, having defined $(\theta^-,d^-)|_{[t_0,t_1)}$, we may define $( \theta^+_1, d^+_1)$ on $[t_0,t_2)$ as in \eqref{eq:smallha2}. Repeating the above argument, we set
\begin{eqnarray}
(\theta^-,d^-)|_{[t_1,t_2)}:=\rho^-(\theta^+_1, d^+_1)|_{[t_1,t_2)}.\notag
\end{eqnarray}
In particular, proceeding inductively, we can based on   $\rho^-\in\mathcal{ S}_m$ construct the controls $( \theta_k^+, d_k^+)$ and
$( \theta^-, d^-)$ such that
\begin{eqnarray}
u_m^-(x,t)&\leq&\mathbb E [e^{-r(T-t)}g( X^{k}(T))]+c(T-t)(\Delta t)^{1/2} +\Delta t\notag\\
&\leq&\sup_{A^+\in {\ac}_m}J^{(x,t)}(A^+,\rho^-(A^+))\notag\\
&&+c(T-t)(\Delta t)^{1/2} +\Delta t,\notag
\end{eqnarray}
for all $k$, $\Delta t=(T-t)/k$, and for all $\rho^-\in\mathcal{ S}_m$. Letting $k\to \infty$, we end up with
\begin{eqnarray}\label{eq:small++++aglit+apa+}
u_m^-(x,t)\le \inf_{\rho^-\in\mathcal{ S}_m}\sup_{A^+\in {\ac}_m}J^{(x,t)}(A^+,\rho^-(A^+)).
\end{eqnarray}

To prove the opposite inequality, we again note, as $u_m^-$ is a solution to \eqref{par10ap-} and $\Phi$ is uniformly continuous, that we can choose, given $k\in \{1,2,\ldots \}$ and $(x,t)\in \Rn\times [0,T]$, a countable set  of  controls $\{\theta^-_{1i},d^-_{1i}\}_{i}\subset \mathcal{H}_m$ and a covering 
\[
\begin{split}
\{ B_{r_l}(x_l) \times B_{r'_l}(\theta^+_l,d_l^+)\}_l,
\end{split}
\]
such that
\begin{equation}
\begin{split}
&
\biggl\{ \Phi(\theta^+,\theta^-_{1l},d^+,d^-_{1l},Du_m^-(x,t),D^2u_m^-(x,t))+ru_m^-(x,t)\biggr \}\\
&\hspace{3 em}\geq \partial_t u_m^-(x,t)- k^{-1},  \notag
\end{split}
\end{equation}
if $(x,(\theta^+,d^+))\in  B_{r_l}(x_l) \times B_{r'_l}(\theta^+_l,d_l^+)$. We define a map
\[
\begin{split}
\psi_0(x,(\theta^+,d^+)):=(\theta^-_{1l},d^-_{1l}),
\end{split}
\]
if $(x,(\theta^+,d^+)) \in B_{r_l}(x_l) \times B_{r'_l}(\theta^+_l,d_l^+)\setminus \cup_{i=1}^{l-1} B_{r_i}(x_i) \times B_{r'_i}(\theta^+_i,d_i^+)$.
 Then, by arguing as above, we deduce that we can construct controls  such that
\begin{eqnarray}\label{eq:small++++aglit+ml}
u_m^-(x,t)&\geq&\mathbb E [e^{-r(T-t)}u_m^-( X^{k}(T),T))]-c(T-t)(\Delta t)^{1/2} -\Delta t\notag\\
&=&\mathbb E [e^{-r(T-t)}g( X^{k}(T))]-c(T-t)(\Delta t)^{1/2} -\Delta t,
\end{eqnarray}
for an arbitrary, but fixed $(\theta^+,d^+)\in {\ac}_m$. Here $X^{k}$ is defined as in \eqref{eq:dynamics}, based on the controls above, and $X^{k}(t)=x$.

We can now use $(\theta^-_{il},d^-_{il}),\ i=1,2,\ldots,k$, constructed  above to define our strategy given $(\theta^+,d^+)$.
In particular, using \eqref{eq:small++++aglit+ml}, we see that
\begin{eqnarray}
u_m^-(x,t)&\geq&J^{(x,t)}(A^+,\rho_k^-(A^+))-c(T-t)(\Delta t)^{1/2} -\Delta t,\notag
\end{eqnarray}
and, hence
\begin{eqnarray}
u_m^-(x,t)&\geq& \inf_{\rho^-\in\mathcal{ S}_m}J^{(x,t)}(A^+,\rho^-(A^+))\notag\\
&&-c(T-t)(\Delta t)^{1/2} -\Delta t,\notag
\end{eqnarray}
for all $k$, $\Delta t=(T-t)/k$, and for all $A^+\in {\ac}_m$. In particular, letting $k\to \infty$, we can conclude that
\begin{eqnarray}
u_m^-(x,t)\ge \inf_{\rho^-\in\mathcal{ S}_m}\sup_{A^+\in {\ac}_m}J^{(x,t)}(A^+,\rho^-(A^+)).\notag
\end{eqnarray}
Combining this with \eqref{eq:small++++aglit+apa+}, we see that the proof of Lemma \ref{lem:value-for-smooth} for $u_m^-$ is complete. \end{proof}

\subsection{Proof of Lemma \ref{lem:value}} In the following, we only supply the proof
in the case of $u_m^-$ in $\Rn\times[0,T]$, the proof for $u_m^+$ being analogous. Given a large non-negative integer $j$, we let $T_j=T-\frac 1j$ and  $\Rn_j:=\Rn\times[\frac1j, T_j]$. Given $j$ fixed we introduce, for $\epsilon>0$ small, the $\sup$-convolution
\[
\begin{split}
u_\eps(x_0,t_0):=\sup_{(x,t)\in \R^n\times [0,T]}\{u_m^-(x,t)-\frac{(t_0-t)^2+(x_0-x)^2}{2\eps}\},
\end{split}
\]
  whenever $(x_0,t_0)\in \R^n_j$. Then $u_m^-(x_0,t_0)\leq u_\eps(x_0,t_0)$ whenever $(x_0,t_0)\in \R^n_j$ and $u_\eps$ is a semi-convex function, i.e. there exists a constant $c>0$ such that $u_{\eps}(x,t)+c(\abs{x}^2+t^2)$ is convex. Furthermore, provided $\eps$ is small enough,
   \[
\begin{split}
H^-_m(x,t,D u_\eps,D^2 u_\eps)\le \partial_t u_\eps(x,t)+ \om({\eps}),
\end{split}
\]
for a.e.\ $(x,t)\in \R^n_j$ and where $\om({\eps})$ is a  bounded modulus which depends on the continuity of $u_m^-$.  For details and properties of $\sup$-convolutions, see for example \cite{crandallil92}, \cite{ishii95} and \cite{lindqvist12}.

 Next, using the standing assumptions on $g$, see \eqref{eq:standing}, and the comparison principle, we see that $0\leq u_m^-(x,t)\leq L$ for all
 $(x,t)\in \mathbb R^n\times [0,T]$. Furthermore, using the boundedness of $u_m^-$, it follows that the supremum used in the definition of  $u_\eps(x_0,t_0)$ is obtained at some point $(x^*,t^*)$. In particular,
  \[
\begin{split}
0&\le u_m^-(x_0,t_0)\le u_\eps(x_0,t_0)=u_m^-(x^*,t^*)-\frac{(t_0-t^*)^2+(x_0-x^*)^2}{2\eps}\\
&\le L-\frac{(t_0-t^*)^2+(x_0-x^*)^2}{2\eps},
\end{split}
\]
and hence
\[
\begin{split}
 \sqrt{(t_0-t^*)^2+(x_0-x^*)^2}\le \sqrt{2L{\eps}},
\end{split}
\]
where we deduce a condition $\sqrt{2L\eps}<1/j$ for $\eps$.

Given $\delta>0$ small, we let $\eta_{\delta}$ denote a standard mollifier in $\mathbb R^{n+1}$. Restricting $\delta\ll ((j-1)^{-1}-j^{-1})/2$, we see that $u_\eps^{\delta}(x,t):=u_\eps*\eta_{\delta}(x,t)$, the convolution of $u_\eps$ and $\eta_{\delta}$, is well-defined whenever $(x,t)\in\mathbb R^n_{j-1}$. Then,
\[
\begin{split}
u_\eps^\delta&\to u_\eps,\quad \text{uniformly on }\R^n_{j-1},\\
D u_\eps^\delta&\to D u_\eps,\quad \text{a.e.\ in }\R^n_{j-1},\\
\partial_tu_\eps^\delta&\to \partial_tu_\eps,\quad \text{a.e.\ in }\R^n_{j-1},\\
D^2 u_\eps^\delta&\to D^2 u_\eps,\quad \text{a.e.\ in }\R^n_{j-1}.
\end{split}
\]
The last statement is based on Alexandrov's theorem, see for example  Section 6 of \cite{evansg92} or \cite{juutinenj12}.
Furthermore,
\[
\begin{split}
H^-_m(x,t,D u_\eps^\delta,D^2 u_\eps^\delta)\le \partial_t u_\eps^\delta (x,t)+\om(\eps)+\gamma_\delta(x,t),
\end{split}
\]
 a.e.\ in $\R^n_{j-1}$ where $\gamma_{\delta}(x,t)\to 0$ as $\delta\to 0$,
and $\gamma_{\delta}$ is uniformly continuous (this is a result for the standard convolution; the modulus of continuity is not claimed to be uniform in $\delta$) and bounded uniformly in $\delta$ (recall uniform semiconvexity  and uniform Lipschitz continuity of $u_{\eps}^\delta$ with respect to $\delta$).
Now, using that $u_\eps^\delta,\partial_tu_\eps^\delta, Du_\eps^\delta,D^2u_\eps^\delta$ are Lipschitz continuous in $\mathbb R^n_{j-1}$, we can
argue as in the proof of Lemma \ref{lem:value-for-smooth} to conclude that
\begin{equation}
\label{eq:key-mollif}
\begin{split}
u_\eps^\delta(x,t)\le & \inf_{\rho^-\in{\str}_m}\sup_{A^+\in {\ac}_m} \mathbb E\biggl[\int_t^{T_{j-1}} e^{-r(s-t)} h_\eps^\delta(X(s),s)\ud s\\
&\hspace{6 em}+e^{-r(T_{j-1}-t)}u_\eps^\delta( X(T_{j-1}),T_{j-1})\biggr],
\end{split}
\end{equation}
whenever $(x,t)\in \mathbb R^n_{j-1}$ and where $ h_\eps^\delta:=\om(\eps)+\gamma_\delta$. Using this,  \eqref{eq:small} becomes
\[
\begin{split}
&\hspace{-1 em}\sup_{(\theta^-,d^-)\in \mathcal{H}_m}\biggl\{ \Phi(\theta_0^+,\theta^-,d_0^+,d^-,Du_\eps^\delta(x,t),D^2u_\eps^\delta(x,t))+ru_\eps^\delta(x,t)\biggr \}\\
&\hspace{3 em}\leq \partial_t u_\eps^\delta (x,t)+h_\eps^\delta(x,t)+ k^{-1},
\end{split}
\]
for $(x,t)\in \mathbb R^n_{j-1}$, and thus, we may estimate the second term on the right hand side of \eqref{eq:small++++} by using $h_\eps^\delta(x,t)$. In particular, in \eqref{eq:use-of-eq}, the expression $\partial_t u_m^-(x,t)- \Phi_0^x(t)-ru_m^-(x,t)$ may be replaced by $h_\eps^\delta(x,t)$. Using these observations, and arguing as in the proof of Lemma \ref{lem:value-for-smooth}, we see that \eqref{eq:small++++agl} becomes
\[
\begin{split}
\mathbb E[&e^{-r\Delta t}u_\delta^\eps( X^{0}(t_1),t_1)]\leq \mathbb E [e^{-2r\Delta t}u_\delta^\eps( X^{1}(t_2),t_2)]\\
&- \mathbb E [\int_{t_1}^{t_2}e^{-r(s-t)}h_{\eps}^\delta(X^{1}(s),s)\ud s ]+c(\Delta t)^{3/2} +k^{-1}\Delta t+c\Delta t\rho(\Delta t),
\end{split}
\]
where $\Delta t=(T_{j-1}-t)/k$, the modulus of continuity $\rho$ in the last error term depends on the modulus of continuity of $h_{\eps}^\delta$, and results from the calculations  similar to those following \eqref{eq:discretization-error}, except that we use $\rho$ instead of $\abs{\cdot}$. Iterating the above reasoning in time, along the lines
of the proof of Lemma \ref{lem:value-for-smooth},  completes the argument. In particular, the last error term yields $c(T_{j-1}-t)\rho((T_{j-1}-t)/k)$. Then, letting $k\to \infty$ \eqref{eq:key-mollif} follows.

Next, we want, for $j$ fixed, to let $\delta\to 0$ and $\eps\to 0$ in \eqref{eq:key-mollif}. To do this, recall that
the underlying dynamics $X$ is defined through the stochastic differential equation in \eqref{eq:dynamics} and based on (uniformly) bounded controls
encoded through ${\str}_m$ and ${\ac}_m$. In particular, $X$ solves a SDE with (uniformly) bounded coefficients. Using this, and a standard martingale argument,  we first observe that there exists, for $\theta>0$ given, $R=R_\theta$ such that
\begin{eqnarray}\label{esta1}
\mathbb P(\sup_{t\le s\le T_{j-1}} \abs{X(s)}\ge R)\le \theta.
\end{eqnarray}
Furthermore, given $\theta>0$ and $R$ as above, we choose $\Om_\theta\subset B_R:=B_R(0)$ such that $\abs{\Om_\theta}<\theta$ and such that
\begin{eqnarray}\label{esta2}
\gamma_{\delta}\to 0\mbox{ uniformly in $(B_R(0)\setminus \Om_{\theta})\times[(j-1)^{-1},T_{j-1}]$ as $\delta\to 0$.}
\end{eqnarray}
Given $E\subset\mathbb R^n$, we let $\chi_E$ denote the indicator function for $E$ in the following. Then, first using \eqref{esta1}, we see that
\[
\begin{split}
&\int_t^{T_{j-1}}\mathbb E[ e^{-r(s-t)}h_\eps^\delta(X(s),s) ]\ud s \\
&\hspace{1 em}\le   \int_t^{T_j} \mathbb E[ h_\eps^\delta(X(s),s)\chi_{B_R}(X(s))]\ud s+\int_t^{T_j} \mathbb E[ h_\eps^\delta(X(s),s)\chi_{B^c_R}(X(s))]\ud s\notag\\
&\hspace{1 em}\leq I_1^{\eps,\delta}(\theta)+I_2^{\eps,\delta}(\theta)+c(T_{j-1}-t) \theta,
\end{split}
\]
for some harmless constant independent of $j$, $\eps$, $\delta$, and $\theta$. Here
\begin{eqnarray}
I_1^{\eps,\delta}(\theta)&:=&\int_t^{T_{j-1}} \mathbb E[ h_\eps^\delta(X(s),s)\chi_{\Om_\theta}(X(s))]\ud s,\notag\\
I_2^{\eps,\delta}(\theta)&:=&\int_t^{T_{j-1}} \mathbb E[ h_\eps^\delta(X(s),s)\chi_{B_R\setminus \Om_{\theta}}(X(s))]\ud s.\notag
\end{eqnarray}

Now
\begin{eqnarray}\label{esta5}
I_1^{\eps,\delta}(\theta)\le c \int_t^{T_{j-1}}  \mathbb E[\chi_{\Om_\theta}(X(s))] \ud s\le c(T_{j-1}-t) \abs{\Om_{\theta}},
\end{eqnarray}
and consequently
\begin{eqnarray}
I_1^{\eps,\delta}(\theta)\le c(T_{j-1}-t) {\theta},\notag
\end{eqnarray}
for some constant $c$ independent of $\eps,\delta,\theta$. Note that the estimate in \eqref{esta5} is far from straightforward. Indeed, \eqref{esta5} is a fundamental estimate by Krylov and stated as Theorem 4 on p.66 in \cite{krylov80}. It is interesting to note that there is, at the core of Krylov's proof of this estimate, a Alexandrov-Bakelman-Pucci-type estimate for uniformly parabolic equations, see Krylov \cite{krylov76}. In particular, a short calculation shows that our dynamics satisfies the sufficient assumptions stated in
\cite{krylov80} for the validity of the estimate in \eqref{esta5}.

Next, we note that
\begin{eqnarray}
I_2^{\eps,\delta}(\theta)\to 0\mbox{ when we first let $\delta\to 0$, and then $\eps\to 0$},\notag
\end{eqnarray}
as we see from the fact that $ h_\eps^\delta=\om(\eps)+\gamma_\delta$, \eqref{esta2}, and that $\om(\eps)\to 0$ as $\eps\to 0$.

Taking the limits $\delta\to 0$ and $\eps\to 0$ in \eqref{eq:key-mollif}, we see by the above that
\[
\begin{split}
u(x,t)\le & \inf_{\rho^-\in{\str}_m}\sup_{A^+\in {\ac}_m} \mathbb E[e^{-r(T_{j-1}-t)}u( X(T_{j-1}),T_{j-1})],
\end{split}
\]
whenever $(x,t)\in \mathbb R^n_{j-1}$. Finally, using the barriers given by Lemma \ref{cor:value-ha}, and by arguing as at the end of the proof of Lemma \ref{lem:equicontinuity} stated below, we can conclude that
\[
\begin{split}
u(x,t)\le &  \inf_{\rho^-\in{\str}_m}\sup_{A^+\in {\ac}_m} \mathbb E[e^{-r(T-t)}g(X(T))],
\end{split}
\]
by letting $j\to \infty$. This completes the proof of Lemma \ref{lem:value}. \hfill $\Box$

\setcounter{equation}{0} \setcounter{theorem}{0}
\section{The limit equation $\partial_t u+F(u,Du,D^2u)=0$}

We here start by introducing the relevant notion of viscosity super- and subsolutions to \eqref{eq:F-eq}.
\begin{definition}\label{def:visc-for-F}
$(a)$ A function $\bar{u}\in \mbox{LSC}_l (\mathbb R^n\times [0,T])$ is a viscosity supersolution to \eqref{eq:F-eq} if $ \bar{u}(x,T)\geq g(x)$ for all $x\in\mathbb R^n$ and if the following holds.  If $(x_0,t_0)\in\mathbb R^n\times (0,T)$ and we have $\phi\in C^{1,2}(\mathbb R^n\times [0,T])$ such that
\begin{eqnarray*}
(i)&&  \bar{u}(x_0, t_0) = \phi(x_0, t_0),\\
(ii)&& \bar{u}(x, t) > \phi(x, t) \mbox{ for } (x,t)\neq (x_0,t_0),
\end{eqnarray*}
then
\begin{eqnarray}\label{aa1tr+}
0\geq \partial_t \phi(x_0,t_0)+F(\bar{u}(x_0,t_0),D\phi(x_0, t_0),D^2\phi(x_0, t_0)),
\end{eqnarray}
whenever $D \phi(x_0,t_0)\neq 0$, and
\begin{eqnarray}\label{aa2tr+}
0\ge \partial_t \phi(x_0,t_0)+\liminf_{p\to 0} F(\bar{u}(x_0,t_0),p,D^2\phi(x_0,t_0)),
\end{eqnarray}
 whenever $D \phi(x_0,t_0)= 0$.

\noindent
$(b)$ A function $\underline u\in \mbox{USC}_l (\mathbb R^n\times [0,T])$ is a viscosity subsolution to \eqref{eq:F-eq} if $\underline u(x,T)\leq g(x)$ for all $x\in\mathbb R^n$ and if the following holds.  If $(x_0,t_0)\in\mathbb R^n\times (0,T)$ and we have $\phi\in C^{1,2}(\mathbb R^n\times [0,T])$ such that
\begin{eqnarray*}
(i)&&  \underline u(x_0, t_0) = \phi(x_0, t_0),\\
(ii)&& \underline u(x, t) < \phi(x, t)\mbox{ for }\ (x,t)\neq (x_0,t_0),
\end{eqnarray*}
then
\begin{eqnarray}\label{bb1tr+}
0\leq \partial_t \phi(x_0,t_0)+F(\underline u(x_0,t_0),D\phi(x_0, t_0),D^2\phi(x_0, t_0)),
\end{eqnarray}
whenever $D \phi(x_0,t_0)\neq 0$, and
\begin{eqnarray}\label{bb2tr+}
0\le \partial_t \phi(x_0,t_0)+\limsup_{p\to 0} F(\underline u(x_0,t_0),p,D^2\phi(x_0,t_0)),
\end{eqnarray}
whenever $D \phi(x_0,t_0)= 0$.

\noindent
$(c)$ If $u$ is both a  viscosity supersolution and a viscosity subsolution to
\eqref{eq:F-eq},  then $u$ is a viscosity solution to \eqref{eq:F-eq}.
\end{definition}


We let
$$F^* :=\limsup_{p\to 0} F\mbox{ and }F_* :=\liminf_{p\to 0} F.$$ Using this notation, we see that \eqref{aa1tr+} and \eqref{aa2tr+} can be written at once as
\[
\begin{split}
0\ge \partial_t \phi(x_0,t_0)+ F_*(\bar{u}(x_0,t_0),D\phi (x_0,t_0),D^2 \phi(x_0, t_0)),
\end{split}
\]
and \eqref{bb1tr+} and \eqref{bb2tr+}  as
\[
\begin{split}
0\le \partial_t \phi(x_0,t_0)+ F^*(\underline u(x_0,t_0),D\phi (x_0,t_0),D^2 \phi(x_0, t_0)).
\end{split}
\]
Similarly to Lemma \ref{lem:comparison-m}, the following lemma also follows from \cite{gigagis91}.
\begin{lemma} \label{thm:value-u-a}
 Let $\underline{u}, \bar u\in \mbox{C}_l(\mathbb R^n\times [0,T])$  be viscosity sub- and supersolutions to \eqref{eq:F-eq} in the sense of Definition \ref{def:visc-for-F}.
Then,
$$\underline{u}(x,t)\leq \bar{u}(x,t),$$
whenever $(x,t)\in \mathbb R^n\times[0,T]$.
\end{lemma}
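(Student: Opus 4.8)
The plan is to follow the doubling-of-variables scheme of Giga, Goto, Ishii and Sato \cite{gigagis91}, the one genuinely delicate feature being the discontinuity of $F$ at $Du=0$. Argue by contradiction and suppose $M:=\sup_{\mathbb R^n\times[0,T]}(\underline u-\bar u)>0$. The unbounded domain is handled exactly by the growth-adapted localization of \cite{gigagis91} (a spatial penalty compatible with the linear growth of $\underline u,\bar u\in\mbox{C}_l(\mathbb R^n\times[0,T])$), which I will not reproduce. The second preliminary step is to replace $\underline u$ by $v:=\underline u-\eta/t$ for small $\eta>0$. Since $F$ is nonincreasing in $u$ (the only $u$-dependence being $-ru$, $r\ge 0$) and $\partial_t(\eta/t)=-\eta/t^2$, a test function $\phi$ touching $v$ from above at an interior point $(x_0,t_0)$ gives, via the subsolution property of $\underline u$ applied to $\phi+\eta/t$, the strict inequality $\partial_t\phi(x_0,t_0)+F^*(v(x_0,t_0),D\phi(x_0,t_0),D^2\phi(x_0,t_0))\ge\eta/t_0^2>0$; moreover $v\to-\infty$ as $t\to 0^+$, and $v-\bar u\le-\eta/T<0$ at $t=T$ by the terminal data, so after the reductions $\sup(v-\bar u)$ is attained at an interior point with positive value. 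This built-in strictness is what makes the argument run uniformly in $r\ge 0$.

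Now double the space variable only, keeping a single time variable, but crucially with the fourth power $|x-y|^4/(4\alpha)$ in place of the usual square: maximize $v(x,t)-\bar u(y,t)-|x-y|^4/(4\alpha)$. The standard compactness lemma gives maximizers $(x_\alpha,y_\alpha,t_\alpha)$ with $t_\alpha\in(0,T)$, $|x_\alpha-y_\alpha|\to 0$ and $\alpha^{-1}|x_\alpha-y_\alpha|^4\to 0$ as $\alpha\to 0$, and the maximal value converging to $\sup(v-\bar u)>0$. The parabolic theorem of sums then yields $a\in\mathbb R$ and $X,Y\in\mathcal{M}(n)$ such that $(a,p_\alpha,X)$ lies in the closure of the second-order parabolic superjet of $v$ at $(x_\alpha,t_\alpha)$, $(a,p_\alpha,Y)$ lies in the closure of the parabolic subjet of $\bar u$ at $(y_\alpha,t_\alpha)$, with $p_\alpha=\alpha^{-1}|x_\alpha-y_\alpha|^2(x_\alpha-y_\alpha)$, and $\diag(X,-Y)\le A_\alpha+\alpha^{-1}A_\alpha^2$, where $A_\alpha$ is the $(x,y)$-Hessian of $|x-y|^4/(4\alpha)$ at $(x_\alpha,y_\alpha)$. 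In particular $X\le Y$, hence $\Sigma X\Sigma\le\Sigma Y\Sigma$ and $\tr(\Sigma^2X)\le\tr(\Sigma^2Y)$.

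Two cases remain. If $x_\alpha\neq y_\alpha$ then $p_\alpha\neq 0$, and the non-singular viscosity inequalities give $a+F(v(x_\alpha,t_\alpha),p_\alpha,X)\ge\eta/t_\alpha^2$ and $a+F(\bar u(y_\alpha,t_\alpha),p_\alpha,Y)\le 0$; subtracting, the drift terms $\mu\cdot p_\alpha$ cancel (same gradient), degenerate ellipticity together with $X\le Y$ makes the normalized-infinity and trace terms contribute $\le 0$, and monotonicity in $u$ turns the remaining terms into $-r(v(x_\alpha,t_\alpha)-\bar u(y_\alpha,t_\alpha))\le 0$, so $0\ge\eta/t_\alpha^2>0$, a contradiction. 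If $x_\alpha=y_\alpha$, then not only $p_\alpha=0$ but also $A_\alpha=0$ — this is the whole reason for the fourth power, the Hessian of $x\mapsto|x-y|^4$ vanishing on the diagonal — so the matrix inequality forces $X\le 0\le Y$; one then uses the envelope inequalities $a+F^*(v(x_\alpha,t_\alpha),0,X)\ge\eta/t_\alpha^2$ and $a+F_*(\bar u(y_\alpha,t_\alpha),0,Y)\le 0$, with $F^*(\xi,0,X)=2\max_{|e|=1}\langle X\Sigma e,\Sigma e\rangle+\tfrac12\tr(\Sigma^2X)-r\xi$ and $F_*(\xi,0,Y)=2\min_{|e|=1}\langle Y\Sigma e,\Sigma e\rangle+\tfrac12\tr(\Sigma^2Y)-r\xi$; since $X\le 0\le Y$ each of $\max_e\langle X\Sigma e,\Sigma e\rangle$, $-\min_e\langle Y\Sigma e,\Sigma e\rangle$ and $\tr(\Sigma^2(X-Y))$ is $\le 0$, and again $-r(v(x_\alpha,t_\alpha)-\bar u(y_\alpha,t_\alpha))\le 0$, so once more $0\ge\eta/t_\alpha^2>0$. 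Hence $M\le 0$ after letting $\eta\to 0$ and removing the spatial penalty, i.e.\ $\underline u\le\bar u$ on $\mathbb R^n\times[0,T]$. The main obstacle is precisely the lack of continuity of $F$ at the origin in $Du$: with the ordinary quadratic doubling the quantity $2\max_e\langle X\Sigma e,\Sigma e\rangle-2\min_e\langle Y\Sigma e,\Sigma e\rangle$ can fail to be $\le 0$ at coincident points, and it is exactly the fourth-order penalization — which makes $A_\alpha$ vanish on the diagonal and hence forces $X\le 0\le Y$ there — that resolves it.
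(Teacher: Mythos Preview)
Your argument is correct and is precisely the approach the paper invokes: the paper does not give its own proof of this lemma but simply refers to \cite{gigagis91}, and what you have written is a faithful implementation of the Giga--Goto--Ishii--Sato scheme for singular degenerate parabolic operators, including the characteristic quartic doubling penalty $|x-y|^4/(4\alpha)$ to force $A_\alpha=0$ (hence $X\le 0\le Y$) at coincident points. The strictness via $\eta/t$, the single-time parabolic theorem of sums (yielding a common $a$), and the treatment of the two cases $x_\alpha\neq y_\alpha$ / $x_\alpha=y_\alpha$ are all standard and correctly executed for this $F$.
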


  Furthermore, arguing as in Lemma \ref{thm:value-ha} and Lemma \ref{cor:value-ha}, we see that we can construct barriers to \eqref{eq:F-eq}, use them for comparison, and prove the following lemma.
\begin{lemma} \label{thm:value-hau}
Let $y\in\mathbb R^n$ and let $L$ be the Lipschitz constant of $g$. Consider $0<\ep\ll 1$, and let
\begin{eqnarray*}
\bar{w}(x,t)=g(y)+\frac A{\ep^2}(T-t)+2L(|x-y|^2+\ep)^{1/2},\notag\\
\underline{w}(x,t)=g(y)-\frac A{\ep^2}(T-t)-2L(|x-y|^2+\ep)^{1/2}.
\end{eqnarray*}
Then, we can choose $A$, independent of $y$, $\ep$ and $m$, so that
$\bar{w}$ and $\underline{w}$ are viscosity super- and subsolutions to
\eqref{eq:F-eq}. Consequently, for such $A$, and if $u$ is a viscosity solution to \eqref{eq:F-eq}, we have
$$
\underline{w}\leq u\leq \bar{w}.
$$
\end{lemma}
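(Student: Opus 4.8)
The plan is to run the barrier argument of Lemma~\ref{thm:value-ha} again, but now checking the differential inequalities directly for the operator $F$ in \eqref{par5qq+}, and then to invoke the comparison principle of Lemma~\ref{thm:value-u-a}. Since $\bar w$ and $\underline w$ are $C^{1,2}$ in $x$, affine in $t$, and of linear growth, they lie in $C_l(\mathbb R^n\times[0,T])$, so Lemma~\ref{thm:value-u-a} will be applicable once the sub/supersolution properties are established. The usual touching argument reduces this to a pointwise check: if $\phi\in C^{1,2}$ touches $\bar w$ from below at an interior point $(x_0,t_0)$, then $D\phi=D\bar w$, $\partial_t\phi=\partial_t\bar w$ and $D^2\phi\le D^2\bar w$ there, and $F$ is monotone increasing in the Hessian slot, so it suffices to prove $\partial_t\bar w+F_*(\bar w,D\bar w,D^2\bar w)\le 0$ everywhere in $\mathbb R^n\times(0,T)$; analogously one needs $\partial_t\underline w+F^*(\underline w,D\underline w,D^2\underline w)\ge 0$ for $\underline w$.

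The terminal conditions are immediate from the Lipschitz bound on $g$, exactly as before: $\bar w(x,T)=g(y)+2L(|x-y|^2+\eps)^{1/2}\ge g(y)+2L|x-y|\ge g(x)$, and symmetrically $\underline w(x,T)\le g(x)$. For the interior inequality, $\partial_t\bar w=-A\eps^{-2}$, $D\bar w=2L(|x-y|^2+\eps)^{-1/2}(x-y)$, which vanishes exactly at $x=y$, and the entries of $D^2\bar w$ are bounded by $cL(|x-y|^2+\eps)^{-1/2}\le cL\eps^{-1/2}$, with $c$ independent of $y,\eps,m$, as in \eqref{par2ll+}. The remaining terms of $F$ are handled exactly as in Lemma~\ref{thm:value-ha}: $\tfrac12\sum\sigma_i^2\bar w_{x_ix_i}$ is bounded by $cL\eps^{-1/2}$, $\sum\mu_i\bar w_{x_i}$ is bounded by $cL$ since $|\bar w_{x_i}|\le 2L$, and $-r\bar w\le 0$ as $r\ge 0$ and $\bar w\ge g(y)\ge 0$ (for $\underline w$ one instead uses $-r\underline w\ge -rg(y)\ge -rL$, which is merely bounded below).

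The one genuinely new ingredient, absent from Lemma~\ref{thm:value-ha} because $\Phi$ carries no $1/|Du|^2$ factor, is the normalized infinity-Laplacian term $\tfrac{2}{|D\bar w|^2}\sum_{i,j}\bar w_{x_ix_j}\bar w_{x_i}\bar w_{x_j}\sigma_i\sigma_j$. Where $x\ne y$ we have $D\bar w\ne0$ and each quotient $\bar w_{x_i}\bar w_{x_j}/|D\bar w|^2$ is bounded by $1$ in absolute value, so a short computation bounds this term by $cL(|x-y|^2+\eps)^{-1/2}\le cL\eps^{-1/2}$; where $x=y$, so $D\bar w=0$, one observes that for every $p\ne0$ one has $|\tfrac{2}{|p|^2}\sum_{i,j}\bar w_{x_ix_j}p_ip_j\sigma_i\sigma_j|\le 2\|D^2\bar w\|\max_i\sigma_i^2\le cL\eps^{-1/2}$ uniformly in $p$, so the same bound survives passing to $\liminf_{p\to0}$ (and to $\limsup_{p\to0}$ in the case of $\underline w$). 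Collecting everything, $\partial_t\bar w+F_*(\bar w,D\bar w,D^2\bar w)\le -A\eps^{-2}+cL\eps^{-1/2}$, which is $\le 0$ once we fix $A=2cL$, independently of $y,\eps,m$ (using $\eps\ll1$); symmetrically $\partial_t\underline w+F^*(\underline w,D\underline w,D^2\underline w)\ge A\eps^{-2}-cL\eps^{-1/2}-rL\ge 0$ after possibly enlarging $A$ by an $O(1)$ amount. Thus $\bar w$ and $\underline w$ are viscosity super- and subsolutions to \eqref{eq:F-eq}, and applying Lemma~\ref{thm:value-u-a} to the pairs $(u,\bar w)$ and $(\underline w,u)$ yields $\underline w\le u\le\bar w$ for every viscosity solution $u$.

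The main obstacle I anticipate is precisely the bookkeeping for the normalized infinity-Laplacian term: one must rule out the apparent $1/|Du|^2$ singularity along $x=y$ (which works only because the barrier has the rigid form $g(y)+2L(|x-y|^2+\eps)^{1/2}$, forcing the gradient-quotient factors to stay bounded) and separately verify the $\liminf$/$\limsup$ clauses of Definition~\ref{def:visc-for-F} at that single degenerate point. Everything else is a line-by-line transcription of the estimates in Lemma~\ref{thm:value-ha}.
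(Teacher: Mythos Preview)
Your proposal is correct and follows the same approach the paper indicates: a direct pointwise verification of the differential inequalities for the explicit smooth barriers, mirroring Lemma~\ref{thm:value-ha}, followed by an application of the comparison principle (Lemma~\ref{thm:value-u-a}), which is exactly the content of the paper's one-line proof ``arguing as in Lemma~\ref{thm:value-ha} and Lemma~\ref{cor:value-ha}''. Your treatment is in fact more thorough than what the paper spells out, since you explicitly handle the normalized infinity-Laplacian term (which replaces the $\Phi_2$ sup--inf structure of the $H_m^\pm$ computation) and separately verify the $\liminf/\limsup$ clauses of Definition~\ref{def:visc-for-F} at the single point $x=y$ where $D\bar w=0$; the paper leaves these details implicit.
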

Below, we always choose, when applying $\bar{w}$ and $\underline{w}$, $A$ so that Lemma \ref{thm:value-hau} holds.

\begin{theorem}\label{th2-}
 There exists a unique viscosity solution $u$ to \eqref{eq:F-eq}.
\end{theorem}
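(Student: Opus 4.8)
The plan is to obtain existence via Perron's method applied directly to the limit equation \eqref{eq:F-eq}, and to obtain uniqueness from the comparison principle already recorded in Lemma \ref{thm:value-u-a}. Uniqueness is essentially immediate: if $u_1,u_2$ are both viscosity solutions to \eqref{eq:F-eq} in the sense of Definition \ref{def:visc-for-F}, then $u_1$ is a subsolution and $u_2$ a supersolution, so Lemma \ref{thm:value-u-a} gives $u_1\le u_2$ on $\mathbb R^n\times[0,T]$; interchanging the roles yields $u_2\le u_1$, hence $u_1\equiv u_2$. (One should note in passing that by comparison with the barriers of Lemma \ref{thm:value-hau} any solution lies between $\underline w$ and $\bar w$, hence is bounded and in particular in $\mbox{C}_l(\mathbb R^n\times[0,T])$, so Lemma \ref{thm:value-u-a} genuinely applies.)

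For existence I would run Perron's method. Fix $y\in\mathbb R^n$ and $\eps$ small, and let $\bar w,\underline w$ be the super/subsolution pair from Lemma \ref{thm:value-hau}, with $A$ chosen so that $\underline w\le \bar w$ and both have the correct ordering against $g$ at $t=T$. Define
\[
u(x,t)=\sup\{v(x,t):\ v\ \text{is a viscosity subsolution to \eqref{eq:F-eq} with}\ \underline w\le v\le \bar w\}.
\]
The family is nonempty (it contains $\underline w$) and uniformly bounded, so $u$ is well-defined with $\underline w\le u\le \bar w$. The standard Perron argument for degenerate parabolic equations — see Ishii's lemma on the stability of subsolutions under suprema and the "bump construction" for supersolutions, as presented in the references already cited in the excerpt (\cite{crandallil92}, \cite{giga06}) — shows that the upper semicontinuous envelope $u^*$ is a subsolution and the lower semicontinuous envelope $u_*$ is a supersolution of the equation in $\mathbb R^n\times(0,T)$. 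One subtlety specific to the infinity-Laplacian-type operator $F$ is the behaviour at $D\phi=0$; this is exactly why Definition \ref{def:visc-for-F} is stated through $F^*$ and $F_*$, and the usual Perron machinery goes through verbatim once one works with these semicontinuous envelopes of $F$, since $F^*$ and $F_*$ are (respectively upper and lower) semicontinuous and degenerate elliptic.

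The terminal condition is handled by the barriers: since $\underline w(x,T)\le g(x)\le \bar w(x,T)$ and $\underline w\le u_*\le u^*\le \bar w$, letting $\eps\to 0$ (the barriers were constructed with $A$ independent of $\eps$ and of $y$, and $\bar w(x,T)-\underline w(x,T)=4L(|x-y|^2+\eps)^{1/2}\to 0$ as $y\to x$, $\eps\to 0$) forces $u^*(x,T)=u_*(x,T)=g(x)$, so $u^*$ is a genuine subsolution and $u_*$ a genuine supersolution of the full terminal value problem \eqref{eq:F-eq}. Then Lemma \ref{thm:value-u-a} applied to the pair $(u^*,u_*)$ gives $u^*\le u_*$, while $u_*\le u\le u^*$ always; hence $u^*=u=u_*$, so $u$ is continuous and is a viscosity solution of \eqref{eq:F-eq}. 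Combined with the uniqueness argument above, this proves the theorem.

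The step I expect to be the main obstacle is verifying the Perron closure properties — that $u^*$ is a subsolution and $u_*$ a supersolution — in the presence of the gradient singularity of $F$ at $D\phi=0$. This requires being careful that the "bump" perturbation used to contradict maximality of $u$ respects the $F_*$/$F^*$ formulation, and that the comparison principle of Lemma \ref{thm:value-u-a} is indeed stated for exactly this class of singular test functions; but since the equation here coincides, up to constants and time reversal, with the normalized parabolic $p$-Laplace / parabolic infinity-Laplace equations for which this program has been carried out in \cite{gigagis91}, \cite{giga06}, \cite{juutinenk06}, the argument can be cited rather than redone in detail.
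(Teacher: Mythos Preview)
Your proposal is correct and follows essentially the same approach as the paper: uniqueness from the comparison principle in Lemma \ref{thm:value-u-a}, and existence via Perron's method using the barriers of Lemma \ref{thm:value-hau}, with the references \cite{giga06}, \cite{gigagis91} carrying the technical details. The paper's own proof is in fact just a two-sentence citation to these ingredients, so your write-up is, if anything, more detailed than what appears there.
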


The uniqueness part of Theorem \ref{th2-} follows from Lemma \ref{thm:value-u-a}.  The existence part of Theorem \ref{th2-} again follows from Perron's method, also using Lemma \ref{thm:value-hau} as discussed after Lemma \ref{thm:value-haha}.


Next, using  a modification of the techniques \cite{chengg91}, \cite{evanss91}, see also \cite{juutinenk06} and \cite{kawohlmp12}, we prove the following lemma which states that the set of test functions used in Definition \ref{def:visc-for-F} can be reduced. We consider continuous sub-/supersolutions as we later will only need this result  for solutions.
\begin{lemma}\label{lem:restricted-class}
Let $u\in \mbox{C}_l (\mathbb R^n\times [0,T])$. Then, to test whether or not $u$ is a viscosity super- or subsolution at $(x_0,t_0)$ in the sense of Definition~\ref{def:visc-for-F}, it is enough
to consider test functions $\phi\in C^{1,2}(\mathbb R^n\times [0,T])$ such that either
\begin{eqnarray*}
(i)&&D \phi(x_0,t_0)\neq 0\mbox{ or }\\
(ii)&&D \phi(x_0,t_0)=0\mbox{ and } D^2 \phi(x_0,t_0)=0.
\end{eqnarray*}
\end{lemma}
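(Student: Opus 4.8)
The plan is to follow the now-standard reduction argument for degenerate operators of infinity-Laplace type, due to Chen–Giga–Goto and Evans–Spruck, which shows that when the gradient of a test function vanishes at the contact point, one may also assume the Hessian vanishes there. The key observation is that the only delicate part of the definition of viscosity super-/subsolution for \eqref{eq:F-eq} concerns the case $D\phi(x_0,t_0)=0$, where $F^*$ and $F_*$ enter; away from critical points $F$ is continuous and there is nothing to prove. So suppose $u\in\mbox{C}_l(\mathbb R^n\times[0,T])$ fails to be, say, a supersolution \emph{only} via test functions that touch from below at a point $(x_0,t_0)$ where $D\phi(x_0,t_0)=0$ but $D^2\phi(x_0,t_0)\neq 0$; I would derive a contradiction by producing, from such a $\phi$, a new test function $\tilde\phi$ with $D\tilde\phi(x_0,t_0)=0=D^2\tilde\phi(x_0,t_0)$ that still touches $u$ from below and violates the restricted inequality.

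First I would reduce to a local statement: by subtracting $\phi$ and translating we may assume $(x_0,t_0)=(0,0)$, $u(0,0)=0$, $\phi(0,0)=0$, $D\phi(0,0)=0$, and $u\ge\phi$ with strict inequality elsewhere near the origin. Taylor-expanding, $\phi(x,t)=\tfrac12\langle B x,x\rangle + bt + o(|x|^2+|t|)$ for a symmetric matrix $B$ and scalar $b=\partial_t\phi(0,0)$. The heart of the argument is to replace the quadratic term $\tfrac12\langle Bx,x\rangle$ by a radial function that is flatter than any quadratic at the origin: one chooses a modulus-type function $\omega$ with $\omega(0)=\omega'(0)=0$ and sets $\tilde\phi(x,t) = \omega(|x|) + bt - C(|x|^{2}+t^{2})^{?}\ldots$; more precisely, following \cite{chengg91}, one uses that since $D\phi(0,0)=0$ the function $x\mapsto \phi(x,t)-bt$ is $o(|x|)$ is false but $o(1)\cdot|x|$ after a further localization — the correct move is: for each small $r>0$ the quantity $M(r):=\max_{|x|\le r,\,|t|\le r}(\phi(x,t)-bt)$ satisfies $M(r)=o(r)$ as... no: one uses that the touching can be made strict and builds $\tilde\phi$ whose Hessian at the origin is $0$ while the time-derivative is unchanged, using that the $\infty$-Laplacian-type first term of $F$ scales so that $F_*(u,0,X)=F_*(u,0,0)$ — indeed, from \eqref{par5qq+} one checks $\liminf_{p\to0}F(u,p,X)$ depends on $X$ only through the linear part $\tfrac12\sum\sigma_i^2 X_{ii}$, which is continuous, so the passage $X\to 0$ is the only thing that must be controlled. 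I would therefore show: whenever $\phi$ touches $u$ from below at a critical point, there is $\tilde\phi$ with vanishing gradient \emph{and} Hessian at that point, $\partial_t\tilde\phi(x_0,t_0)=\partial_t\phi(x_0,t_0)$, still touching from below; then the hypothesis applied to $\tilde\phi$ gives $0\ge\partial_t\phi(x_0,t_0)+F_*(u(x_0,t_0),0,0)$, and since $F_*(u,0,0)=\lim_{p\to0}\bigl(\tfrac12\sum\sigma_i^2\cdot 0+\mu\cdot p-ru\bigr)=-ru(x_0,t_0)=\liminf_{p\to0}F(u(x_0,t_0),p,D^2\phi(x_0,t_0))$ — the latter equality because the infinity-Laplace term is bounded and the linear Hessian term is continuous — we recover exactly \eqref{aa2tr+} for the original $\phi$. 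The construction of $\tilde\phi$ is where I would cite and adapt \cite{chengg91}, \cite{evanss91}: one sets $\tilde\phi(x,t)=\phi(x,t)-\psi(|x|)$ with $\psi\ge 0$, $\psi(0)=\psi'(0)=0$ chosen large enough in curvature near $0$ to kill $D^2\phi(0,0)$ but with $\psi(r)=o(r^2)$... in fact one adds rather than subtracts and uses a function like $\psi(r)=r^{1+\alpha}$ with $0<\alpha<1$: this has zero gradient and (in a distributional/one-sided sense) unbounded-but-subtracted Hessian at $0$, is not $C^{1,2}$, so one instead mollifies — the honest route is the one in \cite{kawohlmp12}: approximate by $\tilde\phi_\eps$, pass to the limit.

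The main obstacle I anticipate is the non-smoothness of the natural radial corrector: a function flat to second order at the origin that dominates a given nonzero quadratic on a punctured neighborhood is not $C^{2}$ at the origin, so one cannot directly use it as a test function in $C^{1,2}$. The resolution, which I would carry out, is the standard perturbation trick: add $\eta|x|^{2+\gamma}$ for $\gamma>0$ (which \emph{is} $C^{1,2}$ with vanishing gradient and Hessian at the origin), observe that $u-\phi-\eta|x|^{2+\gamma}$ still has a strict local minimum at the contact point for every $\eta>0$, and then at the perturbed minimizer $(x_\eta,t_\eta)\to(x_0,t_0)$ either the gradient of the perturbed test function is nonzero — handled by the already-known continuous regime — or it vanishes, in which case its Hessian is $D^2\phi(x_\eta,t_\eta)+o(1)$; letting $\eta\to0$ forces $D^2\phi(x_0,t_0)=0$ unless one lands in case (i), contradiction. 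Combined with the fact that $F_*(u,p,X)$ has the limit structure above, this yields the claimed reduction, and the supersolution case for $D\phi(x_0,t_0)\neq 0$ plus the entire subsolution case follow by the identical argument with inequalities and $\limsup$ reversed.
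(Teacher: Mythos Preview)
Your proposal has two genuine gaps that prevent the argument from closing.

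First, the identity you rely on, $F_*(u,0,0)=\liminf_{p\to 0}F(u,p,M)$ for a nonzero symmetric $M$, is false. From \eqref{par5qq+} one computes, writing $p=r\theta$ with $\theta\in\mathbb S^{n-1}$,
\[
\liminf_{p\to 0}F(u,p,M)=2\min_{\theta\in\mathbb S^{n-1}}(\Sigma\theta)'M(\Sigma\theta)+\tfrac12\tr(\Sigma^2 M)-ru,
\]
which depends nontrivially on $M$; the infinity-Laplace term being bounded does not make it vanish in the liminf. So even if you succeeded in producing a test function $\tilde\phi$ with $D\tilde\phi(x_0,t_0)=0=D^2\tilde\phi(x_0,t_0)$ and the same time derivative, the inequality $0\ge\partial_t\phi+F_*(u,0,0)$ does \emph{not} imply $0\ge\partial_t\phi+F_*(u,0,D^2\phi)$ for the original $\phi$.

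Second, your perturbation trick does not do what you claim. The function $|x-x_0|^{2+\gamma}$ has vanishing gradient \emph{and} Hessian at $x_0$, so $\phi+\eta|x-x_0|^{2+\gamma}$ has exactly the same gradient and Hessian at $(x_0,t_0)$ as $\phi$ does: nothing has been killed. Moreover, there is no mechanism forcing the contact point to move away from $(x_0,t_0)$ for small $\eta$ (if, say, $u-\phi$ grows quadratically near the minimum, the minimum stays put for all small $\eta$), so you cannot guarantee landing in case~(i). The sentence ``letting $\eta\to 0$ forces $D^2\phi(x_0,t_0)=0$'' is simply not supported by the construction.

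The argument that \emph{does} work---and is what \cite{chengg91}, \cite{evanss91} actually do, and what the paper follows---is a doubling-of-variables scheme with a \emph{quartic} spatial penalty: set
\[
w_j(x,t,y,s)=u(x,t)-\varphi(y,s)-\tfrac{j}{4}|x-y|^4-\tfrac{j}{2}(t-s)^2,
\]
where $\varphi$ is the offending test function, and let $(x_j,t_j,y_j,s_j)$ maximize $w_j$. The point of the quartic is that $x\mapsto\tfrac{j}{4}|x-y|^4$ has vanishing gradient \emph{and} Hessian at $x=y$, so whenever $x_j=y_j$ you get, for free, a test function in the restricted class touching $u$ at $(x_j,t_j)$; combined with the first- and second-order information on $\varphi$ coming from the $(y,s)$-minimum and the degenerate ellipticity of $F^*$, this yields a contradiction. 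When $x_j\neq y_j$ the gradient is nonzero and one invokes the theorem of sums. Your direct-construction attempts (radial correctors $\omega(|x|)$, $|x|^{1+\alpha}$, $|x|^{2+\gamma}$) do not reproduce this mechanism.
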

\begin{proof} We here only prove the lemma in the context of subsolutions. The proof is by contradiction. Indeed, assume that there exists a function $u\in \mbox{C}_l (\mathbb R^n\times [0,T])$, {\it which fails to be a subsolution at $(x_0,t_0)$} in the sense of Definition \ref{def:visc-for-F}  even though
the following holds. If $(x_0,t_0)\in\mathbb R^n\times (0,T)$ and $\phi\in C^{1,2}(\mathbb R^n\times [0,T])$ are such that
\begin{eqnarray} 
(i)&&  u(x_0, t_0) = \phi (x_0, t_0),\notag \\
(ii)&& u(x, t) < \phi (x, t)\mbox{ for } (x,t)\neq (x_0,t_0)\notag,
\end{eqnarray}
then
\begin{eqnarray}\label{tr2}
0\le \partial_t \phi(x_0,t_0)+ F^*(u(x_0,t_0),D\phi(x_0,t_0),D^2\phi(x_0,t_0)),
\end{eqnarray}
whenever
\begin{eqnarray}\label{tr3}
(i)&&D \phi(x_0,t_0)\neq 0\mbox{ or }\notag\\
(ii)&&D \phi(x_0,t_0)=0\mbox{ and } D^2 \phi(x_0,t_0)=0.\notag
\end{eqnarray}
Using that $u$ is assumed to fail to be a subsolution, we see that there must also exist a test function $\varphi$ touching from above, and $\eps>0$, such that
\begin{eqnarray}
\label{eq:CP}
0>\partial_t \varphi(x_0,t_0)+ F^*(u(x_0,t_0),D\varphi(x_0,t_0), D^2\varphi(x_0,t_0))+\eps,
\end{eqnarray}
and such that
\begin{eqnarray}
\label{eq:CP+}
D \varphi(x_0,t_0)=0 \text{ and } D^2 \varphi(x_0,t_0)\neq 0.\notag
\end{eqnarray}
In addition, we may assume 
that $u-\varphi$ has a strict global maximum at $(x_0,t_0)$. Let
\begin{eqnarray}
w(x,t,y,s):=w_j(x,t,y,s)=u(x,t)-\varphi(y,s)-\Psi_j(x,t,y,s),
\end{eqnarray}
where
\[
\begin{split}
\Psi(x,t,y,s):=\Psi_j(x,t,y,s)=\frac{j}{4}\abs{x-y}^4+\frac{j}{2}(t-s)^2.
\end{split}
\]
By comparison and the structure of the barriers in Lemma~\ref{thm:value-hau}, we see that there exists  $( x_j,  t_j, y_j, s_j)\in \mathbb R^n \times (0,T) \times \mathbb R^n \times (0,T)$ such that
\begin{equation}\label{comp9}
w ( x_j,  t_j, y_j, s_j) = \sup_{(x,t,y,s) \in \mathbb R^n \times [0,T] \times \mathbb R^n \times [0,T]} w(x,t,y,s).
\end{equation}
Furthermore,
\begin{equation*}\label{comp9pp}
( x_j,  t_j, y_j, s_j)\to(x_0,t_0,x_0,t_0)\quad \mbox{ as }\quad j\to \infty.
\end{equation*}

We now consider two cases.  \\
\noindent {\bf Case 1:} there exists an
infinite sequence of $j$:s such that $x_{j}= y_{j}$ for each such $j$. \\
\noindent {\bf Case 2:} there exists an $j_0\in \{1,2,\ldots\}$ such that $x_{j}\neq  y_{j}$ for all $j$, $j>j_0$.

We first analyze Case 1 and we let $x_{j}=y_{j}$. Then, by construction,
\begin{eqnarray*}
u(x,t)&\leq& u( x_{j},  t_{j})+\varphi(y,s)-\varphi( y_{j}, s_{j})\notag\\
&&+\Psi(x,t,y,s)-\Psi( x_{j},  t_{j}, y_{j}, s_{j}),
\end{eqnarray*}
whenever $(x,t,y,s) \in \mathbb R^n \times [0,T] \times \mathbb R^n \times [0,T]$. In particular,
\begin{eqnarray}\label{testf1}
u(x,t)\leq u( x_{j},  t_{j})+\Psi(x,t, y_{j}, s_{j})-\Psi( x_{j},  t_{j}, y_{j}, s_{j}),
\end{eqnarray}
whenever $(x,t) \in \mathbb R^n \times (0,T) $. Moreover, as $ x_{j}= y_{j}$, we  observe
that $D_x\Psi( x_{j},  t_{j}, y_{j}, s_{j})=0$, $D_{xx}^2 \Psi( x_{j},  t_{j}, y_{j}, s_{j})=0$, and thus, we can conclude that the function on the right in \eqref{testf1} is an admissible test function at $( x_{j},  t_{j})$
for the conclusion in \eqref{tr2}. For brevity, we drop the arguments  $(x_{j},  t_{j}, y_{j}, s_{j})$ in $\Psi$ and its derivatives in the displays below.
We have
\[
\begin{split}
0\le \Psi_t &+F^*(u( x_{j},  t_{j}),D_x\Psi,D^2_{xx} \Psi).
\end{split}
\]
Using that $(y,s)\mapsto \varphi(y,s)+\Psi( x_{j},  t_{j},y,s)$ has a local minimum at $( y_{j}, s_{j})$ by \eqref{comp9}, we observe that $0=-D^2_{yy} \Psi \le D^2 \varphi( y_{j}, s_{j})$, $0=-D_y \Psi=D \varphi( y_{j}, s_{j})$ and  $-\Psi_s =\partial_t \varphi( y_{j}, s_{j})$. By these facts, ellipticity of $F^*$, and \eqref{eq:CP} it follows that
\[
\begin{split}
\Psi_s=-\partial_t \varphi(y_{j},s_{j})> F^*(u(y_{j},s_{j}),-D_y\Psi, -D^2_{yy}\Psi)+\frac\eps2.
\end{split}
\]
\sloppy
Combining the previous two displays, we obtain
\[
\begin{split}
r(u( x_{j},  t_{j})-u(y_{j},s_{j}))&<j\big((t_{j}-s_{j})-(t_{j}-s_{j})\big)-\frac\eps2=-\frac\eps2,
\end{split}
\]
which is a a contradiction for large enough $j$ as $u$ is continuous.

We next analyze Case 2. In this case, using the Theorem of sums, see \cite{crandallil92}, we see that there exist $(\Psi_t,D_x\Psi,X)\in \ol{\mathcal P}^{2,+}u( x, t)$ and $(-\Psi_s,-D_y \Psi,-Y)\in \ol{\mathcal P}^{2,-}\varphi( y, t)$  such that $X\le -Y$. In particular, as $D_x\Psi\neq 0$ it follows that
\begin{equation}
\label{eq:crucial-ineqs}
\begin{split}
0 &\le \Psi_t+F^*(u( x_{j}, t_{j}),D_x \Psi,X)\\
0&>-\Psi_s +F^*(u( y_{j},s_{j}),-D_y\Psi,-Y)+\frac\eps2,\\
\end{split}
\end{equation}
where the second inequality follows from \eqref{eq:CP}. 
Thus
\[
\begin{split}
\Psi_t+\Psi_s &> -F^*(u( x_{j}, t_{j}),D_x\Psi,X)+F^*(u( y_{j},s_{j}),-D_y\Psi,-Y)+\frac\eps2\\
&\ge \frac\eps2+r(u( y_{j},s_{j})-u( x_{j}, t_{j})).
\end{split}
\]
Finally, observing that $\Psi_t=-\Psi_s$, we see that the last display implies that
\[
\begin{split}
0 &\ge \frac\eps2+r(u( y_{j},s_{j})-u( x_{j}, t_{j})),
\end{split}
\]
and this now produces a contradiction for $j$ large enough. The proof for a supersolution is similar.
\end{proof}

\setcounter{equation}{0} \setcounter{theorem}{0}

\section{Going to the limit: general action sets as $m\to\infty$}

\label{sec:limits}

In the following we will use the following lemma.
\begin{lemma}\label{lem:approxham-} Let 
$\xi_m,\xi\in\mathbb R$, $p_m,p\in\mathbb R^n\setminus\{0\}$, $M_m,M\in\mathcal{M}(n)$, be such that
$$
\xi_m\to \xi,\quad p_m\to p, \quad \text{and}\quad  M_m\to M,
$$
as $m\to\infty$.
Then,
\[
\begin{split}
H_m^\pm(\xi_m,p_m,M_m)
\to -F(\xi,p,M).
\end{split}
\]
\end{lemma}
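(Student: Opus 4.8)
The plan is to compute the $\inf$-$\sup$ (resp.\ $\sup$-$\inf$) defining $\tilde H_m^\pm$ explicitly, show the inner optimization over $(\theta^+,d^+)$ and $(\theta^-,d^-)$ is attained (for $m$ large, since $p\ne 0$) at controls that make the relevant directional derivative term saturate, and then read off the limiting expression. First I would observe that $\Phi(\theta^+,\theta^-,d^+,d^-,p,M)$ splits as a part depending on $d^\pm$ only through the scalar $(d^++d^-)(\theta^++\theta^-)\cdot p$, plus a part independent of $d^\pm$. So for fixed directions $\theta^\pm$, optimizing over $d^\pm\in[0,m]$ is linear in $d^\pm$: the sign of $(\theta^++\theta^-)\cdot p$ decides whether the optimizer wants $d^\pm$ large or $d^\pm=0$. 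For the $\sup$ over $(\theta^-,d^-)$ (the maximizing player in $\tilde H_m^+$), the player will choose $\theta^-$ and $d^-$ to push $-(d^++d^-)(\theta^++\theta^-)\cdot p$ upward; for the $\inf$ over $(\theta^+,d^+)$, the minimizing player will respond. The key point is that the minimizing player, moving second in $\tilde H_m^+$, can always choose $\theta^+=-p/|p|$, forcing $(\theta^++\theta^-)\cdot p = \theta^-\cdot p - |p| \le 0$; then taking $d^+=m$ if needed drives the term to $+\infty$ unless $(\theta^++\theta^-)\cdot p=0$, i.e.\ unless the maximizing player has also chosen $\theta^-=p/|p|$... but wait — the minimizer responds to $\theta^-$, so actually the minimizer picks $\theta^+ = -p/|p|$ regardless and $(\theta^-+\theta^+)\cdot p\le 0$ always, so in the $\inf$ the term $-(d^++d^-)(\theta^++\theta^-)\cdot p$ is minimized by $d^+=0$ whenever $(\theta^++\theta^-)\cdot p<0$, and there is a saddle. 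I would carefully show that the optimal choice of directions is $\theta^+=\theta^-=p/|p|$ forced in the limit, and then the $d^\pm$-dependent term vanishes.

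More concretely, I would argue: in $\tilde H_m^+$, for the outer $\sup$ the maximizing player picks $\theta^-$; the inner $\inf$ minimizer can then pick $\theta^+=-p/|p|$, and since $(\theta^++\theta^-)\cdot p\le 0$, the minimizer also picks $d^+=0$, leaving the term $-d^-(\theta^++\theta^-)\cdot p\ge 0$; since the maximizer wants this large, and $d^-\le m$, but the minimizer controls $\theta^+$... one checks that the best the maximizer can do is to make $(\theta^++\theta^-)\cdot p$ as positive as possible given the minimizer's response, which forces $\theta^-=p/|p|$ and $\theta^+=-p/|p|$, giving $(\theta^++\theta^-)\cdot p = 0$ and the whole $d^\pm$-term equal to $0$. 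Actually the cleaner route: show that the value of the game over $d^\pm$ (for $m$ finite but to be sent to $\infty$) equals $0$ for this term, because the second mover can always nullify the first mover's attempt to exploit a nonzero $(\theta^++\theta^-)\cdot p$, by choosing the opposite unit vector. With the $d^\pm$-term optimized to $0$, what remains to optimize over $\theta^\pm\in\mathbb S^{n-1}$ is
\[
-\tfrac12 (\theta^+-\theta^-)'\Sigma M\Sigma(\theta^+-\theta^-)-\tfrac12\tr(\Sigma^2 M)-\mu\cdot p.
\]
The $\sup$-$\inf$ (or $\inf$-$\sup$) of the quadratic form $-\tfrac12(\theta^+-\theta^-)'\Sigma M\Sigma(\theta^+-\theta^-)$ over unit vectors, subject to the constraint that directions point along $\pm p/|p|$ forced by the $d^\pm$ argument — here I would need to reconcile the direction constraint from the first part with the quadratic term. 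If $\theta^+ = -\theta^- = \pm p/|p|$ then $\theta^+-\theta^- = \pm 2p/|p|$, giving $-\tfrac12\cdot 4 \cdot p'\Sigma M\Sigma p/|p|^2 = -2 p'\Sigma M\Sigma p/|p|^2 = -\tfrac{2}{|p|^2}\sum_{i,j}M_{ij}p_ip_j\sigma_i\sigma_j$. Adding everything back (including the $+r\xi$ in $H_m^\pm$ and the overall sign), this reproduces exactly $-F(\xi,p,M)$ as defined in \eqref{par5qq+}: the quadratic term matches the infinity-Laplacian-type term with a sign, $-\tfrac12\tr(\Sigma^2 M)$ matches $-\tfrac12\sum\sigma_i^2 M_{ii}$, $-\mu\cdot p$ matches $-\sum\mu_i p_i$, and $r\xi$ matches.

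The final step is the limit $m\to\infty$ with $\xi_m\to\xi$, $p_m\to p\ne 0$, $M_m\to M$: I would show that for $m$ large enough the explicit formula above for $\tilde H_m^\pm(p_m,M_m)$ stabilizes (the $d^\pm$-term being identically $0$ once the optimization is understood, independent of $m$, so really $\tilde H_m^\pm$ does not depend on $m$ at all for the relevant range, or at worst converges trivially), and then use continuity of the resulting closed-form expression in $(\xi,p,M)$ for $p\ne 0$ — this is where $p\ne 0$ matters, since the formula involves $|p|^{-2}$. The main obstacle I expect is the careful min-max analysis over the directions $\theta^\pm$ \emph{jointly} with the $d^\pm$: one must verify that the optimal strategy for the second mover is indeed to choose the antipodal unit vector and that this is simultaneously compatible with optimizing the quadratic $M$-term, i.e.\ that there is no tension forcing a suboptimal direction choice. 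I would handle this by noting that $(\theta^+-\theta^-)'\Sigma M\Sigma(\theta^+-\theta^-)$ depends only on $\theta^+-\theta^-$, and that once the second mover is committed to $\theta^+=-p/|p|$ (to control the $d^\pm$-term), the first mover optimizing $\theta^-$ over the sphere to make the quadratic term extreme, subject also to wanting $(\theta^++\theta^-)\cdot p = 0$ — but these two objectives need not conflict if $p$ is an eigen-type direction; in general one carefully checks that the min-max value still collapses to the claimed expression, possibly invoking that the second mover's choice $\theta^+=-p/|p|$ is in fact not forced but rather optimal precisely because any other choice lets the first mover send the $d^+$-penalized term off to $+\infty$ (for the $\sup$) or because in the $\inf$ the minimizer gains nothing. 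Once the explicit min-max formula is in hand, continuity and the passage to the limit are routine.
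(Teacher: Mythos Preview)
Your sketch contains a genuine gap at exactly the point you flag as ``the main obstacle''. The tension between optimizing the quadratic term $-\tfrac12(\theta^+-\theta^-)'\Sigma M\Sigma(\theta^+-\theta^-)$ and controlling the linear term $-(d^++d^-)(\theta^++\theta^-)\cdot p$ is real, and your resolution (``one carefully checks that the min-max value still collapses'') is not a proof. For a \emph{fixed} finite $m$, the second mover is \emph{not} forced to pick the antipodal direction: deviating from $\pm p/|p|$ by an angle $\epsilon$ costs only $O(m\epsilon)$ in the $d^\pm$-term, while it can gain $O(1)$ in the quadratic term if $\Sigma M\Sigma$ has a more favorable direction. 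So for general $M$ the finite-$m$ value does depend on $m$, and there is no closed-form $m$-independent expression to read off. (There are also sign slips in your discussion: in $\tilde H_m^+$ the inner minimizer wants $(\theta^++\theta^-)\cdot p$ \emph{positive}, hence $\theta^+=p/|p|$, not $-p/|p|$; the final answer is unaffected because of the squaring, but the reasoning about who drives what to $\pm\infty$ is inverted.)

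The paper does not attempt to solve the finite-$m$ game. Instead it argues purely by limits: first, the specific test choice $\theta^+=p_m/|p_m|$ shows $\tilde\Phi_m$ is bounded above uniformly in $m$; second, compactness gives minimizers $(\theta_m^+,d_m^+)$, and if $\theta_m^+\not\to p/|p|$ along a subsequence then the inner $\sup$ (choosing $\theta^-=-p_m/|p_m|$, $d^-=m$) drives $\tilde\Phi_m\to+\infty$, contradicting boundedness --- this \emph{forces} $\theta_m^+\to p/|p|$ without ever identifying the exact finite-$m$ optimizer. With this convergence in hand, evaluating at suitable test points gives the $\liminf$ bound; a symmetric argument (now fixing $\theta^+=p_m/|p_m|$, $d^+=m$, and tracking the maximizer $\theta_m^-\to -p/|p|$) gives the matching $\limsup$ bound. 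The point is that the limiting behavior of the optimizers is obtained indirectly from the boundedness of the value, which is precisely the step your explicit-computation approach cannot supply.
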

\begin{proof} We here only prove the statement for $H_m^-$,  the proof for $H^+_m$ being analogous. To prove that $$H_m^-(\xi_m,p_m,M_m)\to -F(\xi,p,M)$$ it suffices to consider those terms  in $H_m^-$ which are sensitive to $$\inf_{(\theta^+,d^+)\in\mathcal{H}_m}\ \ \sup_{(\theta^-,d^-)\in\mathcal{H}_m}.$$ In particular, we focus on
\begin{eqnarray*}
\tilde \Phi(\theta^+,\theta^-,d^+,d^-,p_m,M_m)&:=&-\half (\theta^+-\theta^-)'\Sigma M_m\Sigma (\theta^+-\theta^-)\notag\\
&&-(d^++d^-)(\theta^++\theta^-)\cdot p_m.
\end{eqnarray*}
Setting
\[
\begin{split}
\tilde \Phi_m 
&:=\inf_{(\theta^+,d^+)\in\mathcal{H}_m}\ \ \sup_{(\theta^-,d^-)\in\mathcal{H}_m}\tilde \Phi,
\end{split}
\]
we observe that
\[
\begin{split}
\tilde\Phi_m&\leq \sup_{(\theta^-,d^-)\in\mathcal{H}_m} \Big(-\half (p_m/|p_m|-\theta^-)'\Sigma M_m\Sigma (p_m/|p_m|-\theta^-)\notag\\
&\hspace{9 em}-d^-(p_m/|p_m|+\theta^-)\cdot p_m\Big),
\end{split}
\]
and that $(p_m/|p_m|+\theta^-)\cdot p_m\geq 0$ whenever $\theta^-\in \mathbb S^{n-1}$. In particular, we conclude that
$\tilde \Phi_m$ is bounded from above as $m\to \infty$.

Using that the set $\{(\theta^+,d^+)\in\mathcal{H}_m\}$ is compact, we see that there exists a $(\theta^+_{m},d^+_{m})$ realizing the infimum in the definition of
$\tilde \Phi_m$. We will prove
that there exists, given $\eps>0$, $m_0=m_0(\eps)$ such that
\begin{eqnarray}\label{ml3}
|p_m|-\eps\leq\theta_m^+\cdot p_m\leq |p_m|\qquad \mbox{whenever}\quad m\geq m_0.
\end{eqnarray}
Obviously, we only have to establish the lower bound and to do this we assume, on the contrary, that there exists $\eps>0$
and $m_j\to \infty$, such that
\begin{eqnarray}\label{ml3-}
\theta_{m_j}^+\cdot p_{m_j}\leq |p_{m_j}|- \ep\mbox{ as $j\to\infty$}.
\end{eqnarray}
If this is the case, then
\begin{equation}
\label{eq:large-lower-bound}
\begin{split}
\tilde \Phi_{m_j}&=\sup_{(\theta^-,d^-)\in\mathcal{H}_{m_j}}\biggl ( -\half (\theta_{m_j}^+-\theta^-)'\Sigma M_{m_j}\Sigma (\theta_{m_j}^+-\theta^-)\\
&\hspace{9 em}-(d_{m_j}^++d^-)(\theta_{m_j}^++\theta^-)\cdot p_{m_j}\biggr )\\
&\ge -c-(d_{m_j}^++{m_j})(\theta_{m_j}^+-p_{m_j}/|p_{m_j}|)\cdot p_{m_j}\\
&\geq -c+(d_{m_j}^++{m_j}) \eps,
\end{split}
\end{equation}
as $(-p_{m_j}/|p_{m_j}|,{m_j})\in\mathcal{H}_{m_j}$ and for some harmless constant $c$.
However, \eqref{eq:large-lower-bound} contradicts the boundedness of $\tilde \Phi_{m_j}$  as
$m_j\to\infty$, hence \eqref{ml3-} must be false and \eqref{ml3} must hold.

Using \eqref{ml3} we see that
\begin{eqnarray}\label{ml3++}
\theta_m^+\to p/|p|\mbox{ as $m\to\infty$}.
\end{eqnarray}
Furthermore, using that
\[
\begin{split}
\tilde \Phi_m&\geq-\half (\theta_m^++p_m/|p_m|)'\Sigma M_m\Sigma (\theta_m^++p_m/|p_m|)\notag\\
&\hspace{1 em}-(d_m^++m)(\theta_m^+-p_m/|p_m|)\cdot p_m\notag\\
&\geq -\half (\theta_m^++p_m/|p_m|)'\Sigma M_m\Sigma (\theta_m^++p_m/|p_m|),
\end{split}
\]
in combination with \eqref{ml3++}, we have that
\begin{equation}
\label{eq:liminf-lower-bound}
\begin{split}
\liminf_{m\to\infty}\tilde \Phi_m&\geq-2(p/|p|)'\Sigma M\Sigma p/|p|.
\end{split}
\end{equation}
This yields, recalling the rest of the terms in the definition of  $H^-_m$, that
\[
\begin{split}
\liminf_{m\to\infty} H^-_m(\xi_m,p_m,M_m)\geq  -F(\xi,p,M).
\end{split}
\]

To complete the proof it only remains to prove that
\begin{equation}
\label{eq:limsupbound}
\begin{split}
 \limsup_{m\to\infty} H^-_m(\xi_m,p_m,M_m)\leq -F(\xi, p,M).
\end{split}
\end{equation}
To do this, we first note, again using the definition of $(\theta^+_{m},d^+_{m})$, that
\[
\begin{split}
\tilde \Phi_m&= \sup_{(\theta^-,d^-)\in\mathcal{H}_m}\tilde\Phi(\theta_m^+,\theta^-,d_m^+,d^-,p_m,M_m)\\
&\le \sup_{(\theta^-,d^-)\in\mathcal{H}_m}\tilde\Phi(p_m/|p_m|,\theta^-,m,d^-,p_m,M_m).
\end{split}
\]
Furthermore, using compactness, we see that we can choose $(\theta^-_m,d^-_m)$ realizing the supremum in the last display. Hence,
\begin{eqnarray*}
\tilde \Phi_m&\le& -\half (p_m/|p_m|- \theta^-_m)'\Sigma M_m\Sigma (p_m/|p_m|- \theta^-_m) \notag\\
&&-(m+ d_m^-)(p_m/|p_m|+ \theta^-_m)\cdot p_m.
\end{eqnarray*}
Using this we deduce that $ \theta^-_m\to -p/|p|$, as otherwise the above estimate would imply $\liminf_{m\to\infty}\tilde \Phi_m=-\infty$ contradicting \eqref{eq:liminf-lower-bound}. Furthermore,
\[
\begin{split}
\tilde \Phi_m \le& -\half (p_m/|p_m|- \theta^-_m)'\Sigma M_m\Sigma (p_m/|p_m|- \theta^-_m)\\
&-(m+ d_m^-)(p_m/|p_m|+ \theta^-_m)\cdot p_m\\
\le& -\half (p_m/|p_m|- \theta^-_m)'\Sigma M_m\Sigma (p_m/|p_m|- \theta^-_m),
\end{split}
\]
and taking $\limsup_{m\to \infty}$, we see that \eqref{eq:limsupbound} holds. This completes the proof of the lemma.
\end{proof}


\begin{lemma} \label{lem:equicontinuity}  Let $u_m^+$ and $u_m^-$ be the unique solutions to \eqref{par9ap-} and \eqref{par10ap-}, respectively, ensured
by Lemma \ref{thm:value-haha}. Then, there exists $m_0\in\{1,2,\ldots\}$ such that the families
$$
\{u_m^\pm:\ m\geq m_0\}
$$
are equicontinuous on $\mathbb R^n\times [0,T]$.
\end{lemma}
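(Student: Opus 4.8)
The plan is to prove, \emph{uniformly in $m$}, a Lipschitz estimate in the space variable and a H\"older estimate in the time variable; together these furnish a modulus of continuity independent of $m$, which is exactly equicontinuity (in fact the families will turn out to be uniformly H\"older, so any $m_0$ — even $m_0=1$ — works). The essential tool will be the barrier pair of Lemma~\ref{thm:value-ha}, whose point is that the constant $A$ there is independent of the center $y$, of $\eps$, and of $m$.

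\textbf{Spatial Lipschitz bound.} Since the operators $H_m^\pm$ carry no explicit $x$-dependence, I would first observe that for every $h\in\R^n$ the function $v(x,t):=u_m^\pm(x+h,t)-L|h|$ is a viscosity subsolution of the same equation \eqref{par9ap-} (resp.\ \eqref{par10ap-}): in the interior because translation preserves the equation and, as the $u$-dependence of $H_m^\pm$ is the term $+r\xi$ with $r\ge 0$, subtracting the nonnegative constant $L|h|$ only renders the defect favorable; and at $t=T$ because $v(x,T)=g(x+h)-L|h|\le g(x)$ by \eqref{eq:standing}. Both $v$ and $u_m^\pm$ lie in $\mathrm{C}_l(\R^n\times[0,T])$ (recall $0\le u_m^\pm\le L$ by comparison with constants and \eqref{eq:standing}), so Lemma~\ref{lem:comparison-m} gives $v\le u_m^\pm$, i.e.\ $u_m^\pm(x+h,t)\le u_m^\pm(x,t)+L|h|$; exchanging $x$ and $x+h$ yields $|u_m^\pm(x,t)-u_m^\pm(y,t)|\le L|x-y|$ for all $x,y\in\R^n$, all $t\in[0,T]$, and all $m\ge 1$.

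\textbf{Temporal H\"older bound and conclusion.} Fix $(x_0,t_0)\in\R^n\times(0,T]$. Restricted to $\R^n\times[0,t_0]$, $u_m^\pm$ is a viscosity solution of the same equation with terminal datum $u_m^\pm(\cdot,t_0)$, which by the previous step satisfies the $L$-Lipschitz bound and $0\le u_m^\pm(\cdot,t_0)\le L$. The construction in Lemma~\ref{thm:value-ha} uses the terminal datum only through its Lipschitz constant and these uniform bounds, so it provides, for $0<\eps\ll1$ and with the same $m$-independent $A$, a supersolution on $\R^n\times(0,t_0)$
\[
\bar w_{t_0}(x,t)=u_m^\pm(x_0,t_0)+\frac{A}{\eps^2}(t_0-t)+2L(|x-x_0|^2+\eps)^{1/2}
\]
whose value at $t=t_0$ dominates $u_m^\pm(\cdot,t_0)$ (since $2L(|x-x_0|^2+\eps)^{1/2}\ge L|x-x_0|$), together with the analogous subsolution $\underline w_{t_0}$ obtained by reversing signs. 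Comparison on $[0,t_0]$ then gives, at $x=x_0$,
\[
|u_m^\pm(x_0,t)-u_m^\pm(x_0,t_0)|\le \frac{A}{\eps^2}(t_0-t)+2L\eps^{1/2},\qquad 0\le t\le t_0,
\]
and choosing $\eps\sim(t_0-t)^{2/5}$ (admissible once $t_0-t$ is small) balances the two terms and yields $|u_m^\pm(x_0,t)-u_m^\pm(x_0,t_0)|\le C(t_0-t)^{1/5}$ with $C=C(A,L)$ independent of $m$ and of $(x_0,t_0)$. Combining with the spatial estimate, for any $(x_1,t_1),(x_2,t_2)$ with $t_1\le t_2$ and the intermediate point $(x_1,t_2)$ one obtains
\[
|u_m^\pm(x_1,t_1)-u_m^\pm(x_2,t_2)|\le C(t_2-t_1)^{1/5}+L|x_1-x_2|,
\]
a modulus independent of $m$, hence equicontinuity of $\{u_m^\pm:m\ge m_0\}$ on $\R^n\times[0,T]$.

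\textbf{Main obstacle.} The spatial step is soft once one exploits the $x$-independence of $H_m^\pm$; the real work is the temporal estimate. The delicate points are: (i) recognizing that autonomy of the equation in $t$ lets us regard $u_m^\pm$ on $[0,t_0]$ as a fresh terminal value problem with the \emph{Lipschitz} datum $u_m^\pm(\cdot,t_0)$ — which is precisely where the spatial bound is indispensable; (ii) re-running the barrier construction of Lemma~\ref{thm:value-ha} with that datum while keeping $A$, and therefore the final modulus, independent of $m$ — this is exactly the uniformity built into Lemma~\ref{thm:value-ha}; and (iii) noting that Lemma~\ref{lem:comparison-m} and the barrier verification are equally valid on the subinterval $[0,t_0]$ in place of $[0,T]$, which is immediate from the locality of the arguments of \cite{gigagis91}.
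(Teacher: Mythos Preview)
Your argument is correct, and it takes a genuinely different route from the paper's.

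The paper appeals to the stochastic representation $u_m^\pm=U_m^\pm$ from Lemma~\ref{lem:value} and argues, via a ``copying strategies/same Brownian samples'' reduction (citing \cite{PS}), that it suffices to control $|u_m^+(x_1,t_1)-g(x_2)|$ for $(x_1,t_1)$ close to $(x_2,T)$; the barriers of Lemma~\ref{thm:value-ha}/\ref{cor:value-ha} then finish the job directly at the terminal slice. Your approach is purely PDE-based: you first extract an $m$-independent spatial Lipschitz bound from the $x$-autonomy of $H_m^\pm$ together with comparison (the $+r\xi$ term makes subtracting $L|h|$ harmless), and then transplant the barrier of Lemma~\ref{thm:value-ha} to the truncated problem on $[0,t_0]$ with terminal datum $u_m^\pm(\cdot,t_0)$, which you may do precisely because that datum inherits the same Lipschitz constant $L$ and the bounds $0\le u_m^\pm\le L$. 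What this buys you is independence from Section~3: you never need the identification $u_m^\pm=U_m^\pm$, and the argument is entirely self-contained within the viscosity framework. The paper's route, by contrast, is conceptually natural from the game side but leans on the stochastic machinery and a somewhat tersely stated coupling step. Both ultimately rest on the same $m$-independent constant $A$ in Lemma~\ref{thm:value-ha}; your optimization $\eps\sim (t_0-t)^{2/5}$ is the analytic analogue of the paper's choice $|x_1-x_2|+T-t_1\le \eps^{5/2}$, yielding the same $\tfrac15$-H\"older modulus in time.
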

\begin{proof}  We here only prove that $\{u_m^+:\ m\geq m_0\}$ is equicontinuous on $\mathbb R^n\times [0,T]$, the proof for $\{u_m^-:\ m\geq m_0\}$ being analogous. Using Lemma \ref{lem:value} we have that
\begin{eqnarray*}
u_m^+(x,t)=U_m^+(x,t):=\sup_{\rho^+\in{\str}_m}\inf_{A^-\in {\ac}_m}J^{(x,t)}(\rho^+(A^-),A^-),
\end{eqnarray*}
whenever  $(x,t)\in \mathbb R^n\times [0,T]$. Note that we can use both a stochastic as well as a PDE point of view to prove the lemma. Furthermore, the processes underlying the stochastic formulation, see \eqref{eq:dynamics}, all end at $T$. Suppose that we consider two games, one starting from $(x_1,t_1)$ and one starting from $(x_2,t_2)$ with  $t_1<t_2$. We want to show, uniformly in $m$, that $|u_m^+(x_1,t_1)-u_m^+(x_2,t_2)|$ can be made arbitrary small by considering $(x_1,t_1)$ and $(x_2,t_2)$ sufficiently close. Using that the controls and strategies always can be 'copied' for the processes starting from  $(x_1,t_1)$, $(x_2,t_2)$, cf. p.\ 105 \cite{PS}, and by considering same samples, this is possible as we use space and time independent Brownian motions, we see that it is enough to consider points $(x_1,t_1)$ and $(x_2,T)$ with $t_1<T$.
In particular, given $ \delta>0$, we now want to prove that there exists $\eta>0$ such that
\[
\begin{split}
|u_m^+(x_1,t_1)-u_m^+(x_2,T)|=|u_m^+(x_1,t_1)-g(x_2)|\leq \delta
\end{split}
\]
whenever $|x_1-x_2|+T-t_1\leq \eta$.
Recall the barriers
\[
\begin{split}
\bar{w}(x,t)=g(x_2)+\frac A{\ep^2}(T-t)+2M(|x-x_2|^2+\ep)^{1/2},\notag\\
\underline{w}(x,t)=g(x_2)-\frac A{\ep^2}(T-t)-2M(|x-x_2|^2+\ep)^{1/2}.
\end{split}
\]
Using Lemma \ref{cor:value-ha}, we have
\[
\begin{split}
\underline{w}\leq u_m^+\leq \bar{w}.
\end{split}
\]
 In particular,
\[
\begin{split}
|u_m^+(x_1,t)-g(x_2)|\leq \frac A{\ep^2}(T-t)+2M(|x_1-x_2|^2+\ep)^{1/2}.
\end{split}
\]
Let  $|x_1-x_2|+T-t_1\leq\ep^{5/2}$. Then, for $\ep<1$
\[
\begin{split}
|u_m^+(x_1,t_1)-g(x_2)|\leq A\ep^{1/2}+4M\ep^{1/2}\leq (A+4M)\ep^{1/2},
\end{split}
\]
and we conclude, by choosing $\eps$ small enough. 
This completes the proof.
\end{proof}


\begin{lemma}\label{lemimportant}  Let $u_m^+$ and $u_m^-$ be the unique solutions to \eqref{par9ap-} and \eqref{par10ap-}, respectively.  Then,
$$
u_m^\pm(x,t)\to u(x,t),$$
 for all $(x,t)\in \mathbb R^n\times [0,T]$,
where $u$ is the continuous, unique viscosity solution to \eqref{eq:F-eq}.
\end{lemma}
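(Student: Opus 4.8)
The plan is to let $m\to\infty$ by combining the uniform estimates already established with the stability of viscosity solutions under the convergence $H_m^\pm\to-F$ from Lemma \ref{lem:approxham-}, and then to identify the limit by uniqueness. Fixing $m_0$ as in Lemma \ref{lem:equicontinuity}, I would first record that the family $\{u_m^\pm:\ m\geq m_0\}$ is uniformly bounded (by comparison with constants, or via Lemma \ref{cor:value-ha}, one has $0\leq u_m^\pm\leq L$) and equicontinuous on $\mathbb R^n\times[0,T]$ by Lemma \ref{lem:equicontinuity}, and that each $u_m^\pm$ takes the terminal value $g$ (let the base point of the barriers in Lemma \ref{cor:value-ha} tend to $x$ and $\ep\to0$). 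Then the Arzel\`a--Ascoli theorem, together with a diagonal extraction over an exhaustion of $\mathbb R^n\times[0,T]$ by compacts, shows that every subsequence of $\{u_m^+\}$ has a further subsequence $\{u_{m_k}^+\}$ converging locally uniformly to some $v\in\mbox{C}_l(\mathbb R^n\times[0,T])$ with $v(\cdot,T)=g$; the same applies to $\{u_m^-\}$.

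The core step is to prove that any such limit $v$ is a viscosity solution of \eqref{eq:F-eq}. I would verify the subsolution property, the supersolution case being symmetric. By Lemma \ref{lem:restricted-class} it is enough to test with $\phi\in C^{1,2}$ for which $v-\phi$ has a strict local maximum at $(x_0,t_0)\in\mathbb R^n\times(0,T)$ and either (i) $D\phi(x_0,t_0)\neq0$, or (ii) $D\phi(x_0,t_0)=0=D^2\phi(x_0,t_0)$. By locally uniform convergence one can choose maximum points $(x_k,t_k)\to(x_0,t_0)$ of $u_{m_k}^+-\phi$, and the subsolution property of $u_{m_k}^+$ for \eqref{par9ap-} then yields
\[
\partial_t\phi(x_k,t_k)\ \geq\ H_{m_k}^+\bigl(u_{m_k}^+(x_k,t_k),\,D\phi(x_k,t_k),\,D^2\phi(x_k,t_k)\bigr).
\]
In case (i), letting $k\to\infty$ and invoking Lemma \ref{lem:approxham-} (with $u_{m_k}^+(x_k,t_k)\to v(x_0,t_0)$, $D\phi(x_k,t_k)\to D\phi(x_0,t_0)\neq0$, $D^2\phi(x_k,t_k)\to D^2\phi(x_0,t_0)$) gives $0\leq\partial_t\phi(x_0,t_0)+F(v(x_0,t_0),D\phi(x_0,t_0),D^2\phi(x_0,t_0))$. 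In case (ii) Lemma \ref{lem:approxham-} does not apply, but here $F^*(v(x_0,t_0),0,0)=-r\,v(x_0,t_0)$, so it suffices to show $\partial_t\phi(x_0,t_0)\geq r\,v(x_0,t_0)$. Writing $H_m^+=\tilde H_m^++r\,(\cdot)$, I would establish the crude but uniform bound $\tilde H_m^+(p,M)\geq-c\,(|M|+|p|)$, with $c$ depending only on the $\sigma_i$ and $\mu$, by inspecting $\Phi$: choosing $\theta^-=-p/|p|$ for the minimizing player (any $\theta^-$ when $p=0$) makes the otherwise unbounded drift contribution $-(d^++d^-)(\theta^++\theta^-)\cdot p$ nonnegative, while the remaining, diffusion-driven part of $\Phi$ is $O(|M|)$ uniformly in $m$. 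Since $D\phi(x_k,t_k)\to0$ and $D^2\phi(x_k,t_k)\to0$, passing to the limit in the displayed inequality yields $\partial_t\phi(x_0,t_0)\geq r\,v(x_0,t_0)$. The supersolution property is obtained the same way, using instead the supersolution inequality for $u_{m_k}^+$ together with the matching bound $\tilde H_m^+(p,M)\leq c\,(|M|+|p|)$. Hence $v$ solves \eqref{eq:F-eq}.

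It then remains to invoke uniqueness: $v\in\mbox{C}_l(\mathbb R^n\times[0,T])$ is a bounded viscosity solution of \eqref{eq:F-eq} attaining $g$ at time $T$, so by Theorem \ref{th2-} (whose uniqueness part is Lemma \ref{thm:value-u-a}) $v$ coincides with the unique viscosity solution $u$. As every subsequence of $\{u_m^+\}$ has a sub-subsequence converging to this same $u$, the full sequence converges, $u_m^+\to u$; running the same argument for $\{u_m^-\}$ — which solves \eqref{par10ap-}, with $H_m^-\to-F$ again by Lemma \ref{lem:approxham-} — gives $u_m^-\to u$, which proves the lemma. I expect the main obstacle to be precisely the stability step at points where $D\phi$ vanishes, where $F$ is genuinely discontinuous; it is handled by the reduction of the admissible test functions in Lemma \ref{lem:restricted-class} together with the uniform two-sided bound $|\tilde H_m^\pm(p,M)|\leq c\,(|M|+|p|)$.
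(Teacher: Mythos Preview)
Your proof is correct and follows essentially the same route as the paper: uniform bounds plus equicontinuity give Arzel\`a--Ascoli compactness, stability under $H_m^\pm\to-F$ (via Lemma \ref{lem:approxham-}) shows any limit solves \eqref{eq:F-eq}, and uniqueness identifies the limit. You are in fact more careful than the paper at the degenerate point $D\phi(x_0,t_0)=0$, where the paper simply asserts $H_m^+\to -F^*(u(x_0,t_0),0,0)$ while you supply the uniform two-sided bound $|\tilde H_m^\pm(p,M)|\leq c(|M|+|p|)$ that makes this step rigorous.
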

\begin{proof}
We again only prove the result for $u_m^+$, the proof for $u_m^-$ being similar. We first recall that  the existence of $u_m^+$ is ensured
by Lemma \ref{thm:value-haha}. Furthermore, by comparison with a supersolution $L$, we see that the sequence $\{u_m^+\}$ is uniformly bounded in $\mathbb R^n\times [0,T]$.  Using this and Lemma \ref{lem:equicontinuity}, we can first conclude, using the Arzel\`a-Ascoli theorem, that there exists $u$, continuous
on $\mathbb R^n\times [0,T]$, such that
\begin{eqnarray}
u_m^+(x,t)\to u(x,t)\quad \mbox{ as $m\to \infty$}.
\end{eqnarray}

We next prove that $u$ is a viscosity subsolution in $\mathbb R^n\times [0,T)$ to \eqref{eq:F-eq}.  To do this, let $\phi\in C^2$ touch $u$ strictly from above at $(x_0,t_0)$. Then, using the uniform convergence it follows that there exists $(x_m,t_m)\to (x_0,t_0)$ such that
\[
\begin{split}
u_m^+-\phi
\end{split}
\]
has a strict max at $(x_m,t_m)$. Hence
\begin{equation}\label{meq}
\partial_t \phi(x_m,t_m)\ge H^+_m(u_m^+(x_m,t_m),D\phi(x_m,t_m),D^2\phi(x_m,t_m)).
\end{equation}
 Note that, as $m\to\infty$, $\partial_t \phi(x_m,t_m)\to\partial_t \phi(x_0,t_0)$, $u_m^+(x_m,t_m)\to u(x_0,t_0)$, $D\phi(x_m,t_m)\to D\phi(x_0,t_0)$, $D^2\phi(x_m,t_m)\to D^2\phi(x_0,t_0)$, and we want to pass to the limit in \eqref{meq}. Suppose first that $D\phi(x_0,t_0)\neq 0$. Then, using Lemma \ref{lem:approxham-} we see that
\[
\begin{split}
H^+_m (u_m^+(x_m,t_m),&D\phi(x_m,t_m),D^2\phi(x_m,t_m))\\
&\to -F(u(x_0,t_0),D\phi(x_0,t_0),D^2\phi(x_0,t_0)),
\end{split}
\]
as $m\to \infty$. Next, suppose that $D\phi(x_0,t_0)=0$. In this case, we can, by Lemma \ref{lem:restricted-class}, also assume, without loss of generality, that $D^2\phi(x_0,t_0)=0$. But in this case
\[
\begin{split}
H_m^+ (u_m^+(x_m,t_m),&D\phi(x_m,t_m),D^2\phi(x_m,t_m))\\
&\to -F^*(u(x_0,t_0),0,0).
\end{split}
\]
In particular, in either case, we can conclude that
\begin{equation}\label{meq+}
\partial_t \phi(x_0,t_0)\ge  -F^*(u(x_0,t_0),D\phi(x_0,t_0),D^2\phi(x_0,t_0)).
\end{equation}
and hence $u$ is a continuous viscosity subsolution to \eqref{eq:F-eq}. The proof of the result that $u$ is also a supersolution to \eqref{eq:F-eq} is similar. We omit further details.\end{proof}

We are in position to prove the main result of the paper, Theorem \ref{th2}, which states that the game with unbounded controls has a value and that
the value function
\begin{eqnarray*}
u=U^+(x,t)&=&\sup_{\rho^+\in{\str}}\inf_{A^-\in {\ac}}J^{(x,t)}(\rho^+(A^-),A^-)\\
&=&\inf_{\rho^-\in{\str}}\sup_{A^+\in {\ac}}J^{(x,t)}(A^+,\rho^-(A^+))=U^-(x,t),
\end{eqnarray*}
is the unique solution $u$ to \eqref{eq:F-eq}.

\begin{proof}[Proof of Theorem \ref{th2}] We will here only prove that $u=U^-$ as the proof is analogously in the other case. Recall that Lemma \ref{lem:value} states that
\[
\begin{split}
&u_m^-(x,t)=U_m^-(x,t)=\inf_{\rho^-\in{\str}_m}\sup_{A^+\in {\ac}_m}J^{(x,t)}(A^+,\rho^-(A^+)).
\end{split}
\]
Furthermore, using Lemma \ref{lemimportant} we have
\[
\begin{split}
u^-_m(x,t)\to u(x,t),
\end{split}
\]
where $u$  is the solution  to \eqref{eq:F-eq}. Thus, it suffices to prove that
\begin{equation}\label{meq++}
U_m^-(x,t)\to U^-(x,t)\mbox{ as $m\to\infty$}.
\end{equation}
To prove \eqref{meq++}, we first note that
\[
\begin{split}
U^-(x,t)&=\inf_{\rho^-\in{\str}}\sup_{A^+\in {\ac}}J^{(x,t)}(A^+,\rho^-(A^+))\\
&\geq \inf_{\rho^-\in{\str}}\sup_{A^+\in {\ac}_m}J^{(x,t)}(A^+,\rho^-(A^+)).
\end{split}
\]
In particular, given $\ep>0$, using that $U^-(x,t)$ is finite by our assumptions on $g$ and that ${\str}=\cup_m{\str}_m$, we see that there exists
$m_0=m_0(\ep)$ such that
\[
\begin{split}
U^-(x,t)\geq \inf_{\rho^-\in{\str}}\sup_{A^+\in {\ac}_m}J^{(x,t)}(A^+,\rho^-(A^+))\geq U_m^-(x,t)-\ep,
\end{split}
\]
whenever $m\geq m_0$. We can therefore conclude that
\[
\begin{split}
U^-(x,t)\geq \limsup_{m\to \infty}U_m^-(x,t).
\end{split}
\]
To complete the proof of \eqref{meq++}, it hence only remains to prove that
\begin{eqnarray}\label{impa}
U^-(x,t)\leq \liminf_{m\to \infty}U_m^-(x,t).
\end{eqnarray}

To prove \eqref{impa}, we first fix a strategy which estimates the infimum when the supremum is taken over the controls $\ac_k$. Then, by choosing $k$ large enough, we can closely estimate the original supremum taken over $\ac$ by a supremum taken over $\ac_k$. To write down the details,  we recall that
\[
\begin{split}
{\ac}_k&:=\{A\in\ac:\ {\Lambda}(A)\leq k\},\notag\\
{\str}_m&:=\{\rho\in \str:\ \Lambda(\rho)\leq  m\},
\end{split}
\]
for  $k=1,2,\ldots$.
Fix $\ep>0$. For each $k$, we choose $\rho_{km}^-\in {\str}_m $  such that
\begin{eqnarray}
\label{ii1+uu}
&&\sup_{A^+\in {\ac}_k }J^{(x,t)}(A^+,\rho_{km}^-(A^+))\notag\\
&&\leq \inf_{\rho^-\in {\str}_m }\sup_{A^+\in {\ac}_k }J^{(x,t)}(A^+,\rho^-(A^+))+\ep.
\end{eqnarray}
Next, we define
$$\rho_m^-(A^+):=\rho_{km}^-(A^+)\mbox{ whenever $A^+\in {\ac}_k \setminus {\ac}_{k-1} $},$$
and we set ${\ac}_{0} =\emptyset$ in order to get started. Now, using that
${\ac} =\cup_k{\ac}_k $ we  have, for $k\ge k_\eps$ sufficiently large,  that
\[
\begin{split}
U^-(x,t)&\leq \sup_{A^+\in {\ac} }J^{(x,t)}(A^+, \rho_m^-(A^+))\notag\\
&\leq\sup_{A^+\in {\ac}_k }J^{(x,t)}(A^+,\rho_m^-(A^+))+\ep\notag\\
&\leq \inf_{\rho^-\in {\str}_m }\sup_{A^+\in {\ac}_k }J^{(x,t)}(A^+,\rho^-(A^+))+2\ep,
\end{split}
\]
where we on the last line have used \eqref{ii1+uu}. Assuming  $m\ge k_\eps$, we can choose $k=m$ in the last display. Hence we can conclude that there exists, given $\ep>0$,  $m_0=m_0(\ep)$ such that if $m\geq m_0$, then
\begin{eqnarray*}
U^-(x,t)&\leq& \inf_{\rho^-\in {\str}_m }\sup_{A^+\in {\ac}_m }J^{(x,t)}(A^+,\rho^-(A^+))+2\ep\notag\\
&=&U_m^-(x,t)+2\ep.
\end{eqnarray*}
This proves \eqref{impa}. \end{proof}

\def\cprime{$'$} \def\cprime{$'$}

\end{document}